\documentclass[UTF-8,reqno]{amsart}
\usepackage{enumerate}
\usepackage{mhequ}
\setlength{\topmargin}{-0.3cm}
\setlength{\textheight}{21.5truecm}
\usepackage{amssymb,url,color, booktabs,nccmath}
\usepackage[left=3.2cm,right=3.2cm,top=4cm,bottom=4cm]{geometry}
\usepackage{mathrsfs}
\usepackage{enumitem,dsfont}

\usepackage{color}
\usepackage[colorlinks=true]{hyperref}
\hypersetup{
    linkcolor=blue,          
    citecolor=red,        
    filecolor=blue,      
    urlcolor=cyan
}

\definecolor{darkergreen}{rgb}{0.0, 0.5, 0.0}


\setlength{\parskip}{2pt}


\newtheorem{theorem}{Theorem}[section]
\newtheorem{lemma}[theorem]{Lemma}
\newtheorem{proposition}[theorem]{Proposition}

\newtheorem{corollary}[theorem]{Corollary}

\newtheorem{definition}[theorem]{Definition}
\theoremstyle{definition}
\newtheorem{remark}[theorem]{Remark}


\allowdisplaybreaks

\usepackage{esint}
\usepackage{cite}
\usepackage{verbatim}
\usepackage{times}
\usepackage{bm}

\numberwithin{equation}{section}

\begin{document}

\title{Sharp non-uniqueness of solutions to stochastic Navier-Stokes equations}
\author{Weiquan Chen}
\address[W. Chen]{Academy of Mathematics and Systems Science,
Chinese Academy of Sciences, Beijing 100190, China}
\email{chenweiquan@amss.ac.cn}

\author{Zhao Dong}
\address[Z. Dong]{Academy of Mathematics and Systems Science,
Chinese Academy of Sciences, Beijing 100190, China}
\email{dzhao@amt.ac.cn}

\author{Xiangchan Zhu}
\address[X. Zhu]{Academy of Mathematics and Systems Science,
Chinese Academy of Sciences, Beijing 100190, China}
\email{zhuxiangchan@126.com}
\thanks{Research  supported   by National Key R\&D Program of China (No. 2020YFA0712700) and the NSFC (No. {11931004,} 12090014, 12288201) and
  the support by key Lab of Random Complex Structures and Data Science,
 Youth Innovation Promotion Association (2020003), Chinese Academy of Science. The financial support by the DFG through the CRC 1283 ``Taming uncertainty and profiting
 from randomness and low regularity in analysis, stochastics and their applications'' is greatly acknowledged.}

\begin{abstract} In this paper we establish a sharp non-uniqueness result for stochastic $d$-dimensional ($d\geq2$) incompressible Navier-Stokes equations. First, for every divergence free initial condition in $L^2$ we show existence of infinite many global in time probabilistically strong and analytically weak solutions in the class $L^\alpha\big(\Omega,L^p_tL^\infty\big)$ for any $1\leq p<2,\alpha\geq1$. Second, we prove the above result is sharp in the sense that pathwise uniqueness holds in the class of $L^p_tL^q$ for some $p\in[2,\infty],q\in(2,\infty]$ such that $\frac2{p}+\frac{d}{q}\leq1$, which is a stochastic version of Ladyzhenskaya-Prodi-Serrin criteria. Moreover, for stochastic $d$-dimensional incompressible Euler equation, existence of infinitely many  global in time probabilistically strong and analytically weak solutions is obtained. Compared to the stopping time argument used in \cite{HZZ19, HZZ21a}, we developed a new stochastic version of the convex integration. More precisely, we introduce expectation during convex integration scheme and construct directly solutions on the whole time interval $[0,\infty)$.
\end{abstract}
\subjclass[2010]{60H15; 35R60; 35Q30}
\keywords{stochastic Navier--Stokes equations, stochastic Euler equations, probabilistically strong solutions,  sharp non-uniqueness, convex integration}
\maketitle

\section{Introduction}\label{sec. introd.}
\setcounter{section}{1}\
The Navier-Stokes/Euler equations are fundamental models in fluid dynamics. Existence  of global strong solutions to the three dimensional incompressible Navier--Stokes system  is one of the Millennium Prize Problems. An intimately related question is that of uniqueness of solutions, which has been studied a lot in the literature.
In 2D case, existence and uniqueness of solution is well-known. In higher dimensions, since  existence of weak solutions is known {\cite{Ler34, Hop51},} there are a number of literature on the uniqueness of weak solutions. For brevity, we summarize the classical Ladyzhenskaya-Prodi-Serrin uniqueness criteria as follows:
\begin{theorem}[\cite{FJR72, Kat84, FLRT00, LM01, CL22}]\label{Thm. weak strong uniqueness} Let $d\geq2$ and $u$ be a weak solution to the incompressible Navier--Stokes equations such that $u$ belongs to
\begin{equation}\label{Serrin scaling spaces}
	X^{p,q}_T:=
	\left\{\begin{aligned}
		&~L^p\big(0,T;L^q\big)\ \ \ \ \ \ 1\leq p<\infty\\
		&~C\big([0,T];L^q\big)\ \ \ \ \ \ \ p=\infty\\
	\end{aligned}\right.
\end{equation}
for some $2\leq p\leq\infty$ and $d\leq q\leq\infty$ such that $2/p+d/q\leq1$. Then $u$ is unique in this class of weak solutions and is Leray-Hopf in the sense that $u\in  C_w\big([0,T];L^2\big)\cap L^2\big(0,T;H^1\big)$ and satisfies energy inequality.
\end{theorem}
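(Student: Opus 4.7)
The plan is to establish uniqueness by a standard energy estimate for the difference of two solutions, and then derive the Leray--Hopf property from the same estimate applied against $u$ itself. Throughout, I would assume (by a mollification/Galerkin approximation of the test function and the usual density argument) that one is allowed to test the weak formulation against suitably regular functions that are admissible in the class $X^{p,q}_T$.

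\medskip

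First, I would prove uniqueness. Let $u\in X^{p,q}_T$ be the LPS-class solution and let $v$ be any Leray--Hopf weak solution with the same initial datum $u_0\in L^2$. Writing $w:=u-v$, I would aim to test the difference of the two weak formulations against $u$ (or against $w$ after justifying it as an admissible test function in the LPS class, as in Prodi--Serrin). Using $\mathrm{div}\,u=\mathrm{div}\,v=0$, the cancellation $\langle (v\cdot\nabla)w,w\rangle=0$, and the energy inequality for $v$ together with the energy equality for $u$ (which is available in the LPS class), the standard manipulation yields the differential inequality
\begin{equation*}
\tfrac{1}{2}\|w(t)\|_{L^2}^2+\int_0^t\|\nabla w\|_{L^2}^2\,ds\ \leq\ \int_0^t\bigl|\langle (w\cdot\nabla)w,u\rangle\bigr|\,ds.
\end{equation*}
The right-hand side is estimated by H\"older and the Gagliardo--Nirenberg interpolation $\|w\|_{L^{2q/(q-2)}}\lesssim \|w\|_{L^2}^{1-d/q}\|\nabla w\|_{L^2}^{d/q}$, yielding
\begin{equation*}
\bigl|\langle (w\cdot\nabla)w,u\rangle\bigr|\ \lesssim\ \|u\|_{L^q}\,\|w\|_{L^2}^{1-d/q}\|\nabla w\|_{L^2}^{1+d/q}.
\end{equation*}
Young's inequality with exponents $\bigl(\tfrac{2}{1+d/q},\tfrac{2}{1-d/q}\bigr)$ absorbs the dissipation and produces, thanks to $2/p+d/q\leq 1$ (so that $\tfrac{2}{1-d/q}\leq p$), the Gr\"onwall-type bound
\begin{equation*}
\tfrac{d}{dt}\|w\|_{L^2}^2\ \lesssim\ \|u\|_{L^q}^{2/(1-d/q)}\|w\|_{L^2}^2,
\end{equation*}
with time-integrable prefactor. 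Since $w(0)=0$, Gr\"onwall yields $w\equiv 0$ on $[0,T]$.

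\medskip

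Second, I would verify the Leray--Hopf property of $u$. The same interpolation bound applied to $u$ itself (using $u$ as a test function in its own weak formulation, after a standard approximation argument legitimised by $u\in X^{p,q}_T$) gives $u\in L^2(0,T;H^1)$ and the energy equality, from which the continuity $u\in C_w([0,T];L^2)$ follows by the usual argument (weak $L^2$ continuity plus norm continuity).

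\medskip

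The main obstacle is the endpoint case $q=d$, $p=\infty$, where $1+d/q=2$ and Young's inequality degenerates: one cannot simply absorb $\|\nabla w\|_{L^2}^2$ against $\|u\|_{L^d}$ without additional smallness. To handle it I would follow the Kato--FLRT--Lions--Masmoudi strategy: for any $\varepsilon>0$ decompose $u=u^\sharp+u^\flat$ with $u^\sharp\in C([0,T];L^d)$ satisfying $\sup_t\|u^\sharp(t)\|_{L^d}<\varepsilon$ and $u^\flat\in L^\infty(0,T;L^r)$ for some $r>d$ (available by approximation in $C([0,T];L^d)$). The piece $u^\sharp$ is absorbed into the viscous term by choosing $\varepsilon$ small, while $u^\flat$ falls into the subcritical regime already handled above. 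Gr\"onwall then closes the estimate. Apart from this technical endpoint, the rest of the argument is routine.
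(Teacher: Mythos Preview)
The paper does not itself prove Theorem~\ref{Thm. weak strong uniqueness}; it is quoted as a classical result. The natural comparison is with the paper's proof of the stochastic analogue, Theorem~\ref{thm. Stochastic Serrin's Criterion}, in Section~\ref{sec. sto. weak-strong uniqueness}, which follows \cite{CL22}.

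Your uniqueness argument---energy estimate for the difference, H\"older plus the interpolation $\|w\|_{L^{2q/(q-2)}}\lesssim\|w\|_{L^2}^{1-d/q}\|\nabla w\|_{L^2}^{d/q}$, Young, Gr\"onwall, and for the endpoint $q=d$ the splitting of $u$ into a small $C_tL^d$ part and an $L^\infty$ remainder---is exactly the route taken in the paper (Cases~1 and~2 in Section~\ref{sec. sto. weak-strong uniqueness}).

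The Leray--Hopf step is where you differ, and your argument hides the main difficulty. You propose to use $u$ as a test function in its own weak formulation to obtain $u\in L^2_tH^1$ and the energy equality; but a priori the weak solution only lies in $L^2_{t,x}$, so neither the dissipation term $\int\|\nabla u\|_{L^2}^2$ nor the cancellation $\langle(u\cdot\nabla)u,u\rangle=0$ is defined, and the phrase ``standard approximation argument legitimised by $u\in X^{p,q}_T$'' is precisely the nontrivial content that needs to be supplied. Moreover, your uniqueness step already invokes the energy equality for $u$, so the two steps are circular as ordered. The paper (following \cite{CL22}) avoids this entirely: one first solves the \emph{linearized} problem $\partial_t\chi+\mathrm{div}(u\otimes\chi)=\Delta\chi-\nabla p$, $\chi(0)=u_0$; a Galerkin argument using only $u\in X^{p,q}_T$ produces $\chi\in C_tL^2\cap L^2_tH^1$ satisfying the energy inequality. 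Then one shows $u=\chi$ by duality: test the equation for $\eta:=u-\chi$ against solutions $\Phi\in L^\infty_tH^1\cap L^2_tH^2$ of a \emph{backward} adjoint system with arbitrary smooth forcing (Theorem~\ref{thm. regularity of linearized N-S}, i.e.\ \cite[Theorem~A.3]{CL22}). This duality step needs only $\eta\in L^2_{t,x}$, so no circularity arises. Only after $u=\chi$ is established does the paper run the energy estimate for the difference of two solutions, both now known to be Leray--Hopf.
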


In the literature the space $X^{p,q}$ is called sub-critical when $2/p+d/q<1$, critical when $2/p+d/q=1$, and super-critical when $2/p+d/q>1$. It is natural to ask what would happen in the supercritical regime $2/p+d/q>1$. i.e. whether uniqueness holds in this case. Recently Cheskidov and Luo \cite{CL21, CL22} proved the solutions are not unique in $L^p\big(0,T;L^\infty\big),1\leq p<2,d\geq2$ nor in $C\big([0,T],L^p\big),p<2,d=2$ by using the method of convex integration.
Convex integration was introduced into fluid dynamics by De Lellis and Sz{\'e}kelyhidi~Jr.
\cite{DLS09,DLS10, DLS13, DLS14}. This method has already led to a number of groundbreaking results: Isett \cite{Ise18} proved Onsager's conjecture, see also \cite{BDLSV19}. Non-uniqueness of weak solutions to the incompressible Navier-Stokes equations was obtained by Buckmaster and Vicol \cite{BV19}, see also Buckmaster, Colombo and Vicol \cite{BCV18}. Burczak, Modena and Sz\'{e}kelyhidi Jr. \cite{BMS21} then obtained ill-posedness for power-law flows and also, in particular, non-uniqueness of weak solutions to the Navier-Stokes equations for every given divergence free initial condition in $L^2$. We refer to the reviews \cite{BV19, BV21} for more details and references. We also mention  by a different method, a first non-uniqueness result for Leray solutions was obtained in \cite{ABC21} for the Navier-Stokes system with a special force.
\par In view of these negative developments, a suitable stochastic perturbation may provide a regularizing effect on problems. In the deterministic case, a selection of solutions depending continuously on the initial condition has not been obtained. However, the probabilistic counterpart, i.e. the Feller property and even the strong Feller property which corresponds to a smoothing with respect to the initial condition, ware established  by Da Prato and Debussche \cite{DPD03} and by Flandoli and Romito \cite{FR08}. A transport noise and linear multiplicative noise prevent blow up of strong solutions have been obtained by Flandoli, Gubinelli and Priola \cite{FGP10} and Flandoli and Luo \cite{FL21} and Glatt-Holtz and Vicol \cite{GHV14} and R\"ockner, Zhu and Zhu \cite{RZZ14}. One would naturally ask whether the  non-uniqueness result still holds in the stochastic case. Recently in \cite{HZZ19, HZZ21a, HZZ21b} non-uniqueness in law and even non-uniqueness of Markov family for the stochastic Navier-Stokes/Euler equations have been established by a stochastic counterpart of the convex integration method. One may further ask whether the noise makes the critical regularity of uniqueness different, i.e. whether non-uniqueness still holds in the supercritical regime.
\par In this paper we prove that sharp non-uniqueness holds for the stochastic $d$-dimensional ($d\geq2$) Navier-Stokes system driven by an additive noise. In \cite{HZZ19, HZZ21a, HZZ21b} the stopping time is introduced to control the noise uniformly in $\omega$ and it was removed by a suitable extension of solutions. However, such extensions require the solution at stopping time belongs to $L^2$-space which is not applicable in our case as the solution is only in $L^2$ space for a.e. $t$. Instead we introduce expectation during convex integration scheme which can be viewed as a new stochastic version of the convex integration. Since the nonlinear term is quadratic, we have to estimate higher moments at step $q$ than the moment bound we required at step $q+1$. Then it seems we have to bound all the finite moment at each step which may blow up during iterations. The key point that this method works is that the higher moments at step $q$ only depends on the parameters up the step $q$ and we could choose the parameters at the step $q+1$ to guarantee smallness. Moreover, to construct the solutions directly on the whole time interval $[0,\infty)$, we introduced the norm of the following form:
$$\sup_{s\geq0}\Big(\mathbb{E}\big\|u\big\|^\alpha_{L^p([s,s+1];L^\infty)}\Big)^{1/\alpha}$$
with $p,\alpha\geq1$. This requires the stochastic part also has finite norm of this form. To this end we introduced a damping term in the linear equation and subtract the extra term in the nonlinear equation (see \eqref{sto. low Re system} and \eqref{random NS} below for more details.)

\subsection{Main Results}

In this paper we are concerned with stochastic Navier-Stokes equations on $\mathbb{T}^d, d\geq2$ driven by an additive stochastic noise. The equations govern the time evolution of the fluid velocity $u$ and read as 
\\
\begin{equation}
	\left\{\begin{aligned}
		&{\rm d}u(t)=\Big(-{\rm div}\big(u(t)\otimes u(t)\big)+\Delta u(t)-\nabla p(t)\Big){\rm d}t+{\rm dW}(t)~,\\
		&{\rm div}\ u(t)=0~,\\
        &u(0)=u_0~.
	\end{aligned}\right.
	\label{sto. NS}
\end{equation}
\\
Here 
${\rm W}=\big\{{\rm W}(t);~0\leq t<\infty\big\}$ is a $GG^*$ Wiener process on a given probability space $\big(\Omega, \mathscr{F}, \mathbb{P}\big)$ and $G$ is a Hilbert-Schmidt operator from $L^2$ to $L^2$. 
Let $\lbrace\mathscr{F}_t\rbrace_{t\geq0}$ denote the  normal filtration generated by ${\rm W}$, that is, the canonical right-continuous filtraton augmented by all the $\mathbb{P}$-negligible events.

\par Compared to  the  deterministic case, the stochastic equations possess additional structural features. First of all, we distinguish between probabilistically strong and probabilistically weak (also called  martingale) solutions. Probabilistically strong solutions are constructed on a given probability space and are adapted with respect to the given  noise. Probabilistically weak solutions do not have this property: they are typically obtained by the method of compactness where the noise as well as the probability space becomes part of the construction.  For the Navier-Stokes equations only  probabilistically weak solutions are obtained by a compactness argument. In fact it is necessary to take expectation to control the noise and obtain uniform energy estimates, which then leads to probabilistically weak solutions. Due to the lack of uniqueness in higher dimensions we cannot apply Yamada--Watanabe's theorem to obtain probabilistically strong solutions. Moreover, if we analyze the equation $\omega$-wise, then the converging subsequence from compactness argument may depend on $\omega$ which destroys adaptedness. Consequently, it has been a long standing open problem to construct probabilistically strong solutions to the Navier--Stokes system \eqref{sto. NS} in higher dimensions, see page 84 in \cite{Fla08}. In \cite{HZZ21a}, Hofmanov\'{a}, Zhu and the third named author solved this problem and proved  existence of global-in-time non-unique probabilistically strong and analytically weak solutions for every given divergence free initial condition in $L^{2}$ in 3D case by using a stochastic convex integration method. Our main result also extends this result to higher dimensional case.
\par Now we recall the definition of probabisitcally strong and analytically weak  solution to the system (\ref{sto. NS}).
\begin{definition}\label{def. weak solu.}
A $W^{s,p}$-valued ($s\in\mathbb R,1\leq p\leq\infty$) continuous $\mathscr{F}_t$ -adapted proccess $u=\big\lbrace u(t);{t\in[0,\infty)}\big\rbrace$
is said to be a global in time probabilistically strong and analytically weak solution to system (\ref{sto. NS}) with initial data $u_0\in L^2_\sigma$ , if $~\mathbb{P} - a.s.$ : \\
$\bf{(i)}$~~~${\rm div}~u\equiv0$ in the sense of distribution;\\
$\bf{(ii)}$~~~
$u\in L^2_{loc}\big([0,\infty);L^2\big)$ ;
\\
$\bf{(iii)}$~~~for any test function $\varphi\in C^\infty_\sigma(\mathbb{T}^d)$,
we have for each $t\in[0,\infty)$ that
\begin{equation}
	\big\langle u(t),\varphi\big\rangle = \big\langle u_0,\varphi\big\rangle\ + \int^{t}_{0}\big\langle-{\rm div}\big(u(s)\otimes u(s)\big)+\Delta u(s)~,~ \varphi\big\rangle~{\rm d}s \ +\big\langle {\rm W}(t) ,~\varphi\big\rangle~.
	\label{weak solu.}
\end{equation}
\end{definition}\

To state our main result, we first decompose $u=v+z$ with $z$ solving the linear stochastic equation:
\begin{equation}
	\left\{\begin{aligned}\label{sto. low Re system}
		&{\rm d}z(t)=\big(\Delta-{\rm I}\big) z(t){\rm d}t+\nabla P_1+{\rm dW}(t)\ \ \ \ \ t\in[0,\infty)\\
		&{\rm div}\ z(t)=0\ \ \ \ \ \ t\in[0,\infty)\\
		&z(0)= u_0,
	\end{aligned}\right.
\end{equation}
and $v$ solving the  non-linear equations:
\begin{equation}\label{random NS}
	\left\{\begin{aligned}
		&\partial_t v=-{\rm div}\Big[\big(v(t)+z(t)\big)\otimes\big(v(t)+z(t)\big)\Big]+\Delta v(t)+z(t)-\nabla p(t) \ \ \ \ t\in[0,\infty) \\
		&{\rm div}\ v(t)=0\ \ \ \ \ \ t\in[0,\infty)\\
		&v(0)= 0.
	\end{aligned}\right.
\end{equation}
By adding a new damping term, we could obtain a uniform in time bound (see Theorem \ref{Thm. z regularity}.) The following is our main result and it is proved in Section \ref{sec. proof}.

\begin{theorem}\label{prop. closeness to initial vector field}
	For any $\varepsilon>0$ , any $1\leq\alpha,r<\infty$, $1\leq p<2$, any $u_0\in L^2_\sigma$ and any smooth vector field $w\in\mathcal C^1_{0,\sigma}$ ,
	there exists a probabilistically strong and analytically weak global solution $u$ to (\ref{sto. NS}) {with initial data $u_0$ ,} such that 
	\begin{align}\label{additional regularities of solution}
		u-z\in~&\bar L^2\big(\Omega,L_s^2L^2\big)
		\bigcap\bar L^{\alpha}\big(\Omega;Z^{p,r}\big)\bigcap\mathbf E_p	
	\end{align}
	and $u$ is close to $w+z$ in the following sense:
	\begin{align}\label{small deff.}
	    \big\|u-(w+z)\big\|_{\bar L^{\alpha}\left(\Omega;Z^{p,r}\right)}+\big\|u-(w+z)\big\|_{\mathbf E_p}<\varepsilon~.
	\end{align}
In particular, there exists infinitely many different probabilistically strong and analytically weak global solutions to (\ref{sto. NS}).
For the definition of spaces we refer to {(\ref{def. sup-C^infty space}),} (\ref{def. Z-space}), (\ref{def. [L,L,Y]-spaces}) and (\ref{def. E_p space}).
\end{theorem}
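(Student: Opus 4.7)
My plan is to follow the decomposition $u = v + z$ set up in the paper. The linear process $z$ solving (\ref{sto. low Re system}) is handled first: because of the added damping term $-\mathrm{I}z$, $z$ inherits uniform-in-time moment estimates in the norms appearing in (\ref{additional regularities of solution}) and (\ref{small deff.}) (this is Theorem \ref{Thm. z regularity}), so the problem reduces to producing an analytically weak solution $v$ of the random PDE (\ref{random NS}) with $v(0) = 0$, close in the prescribed norms to the smooth target $w$, and adapted to the given filtration $\{\mathscr{F}_t\}_{t\geq 0}$.

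The main construction is a convex integration iteration producing a sequence $(v_q, \mathring{R}_q)_{q\geq 0}$ of smooth $\mathscr{F}_t$-adapted pairs solving the Navier--Stokes--Reynolds system
\begin{equation*}
\partial_t v_q + \mathrm{div}\bigl((v_q+z)\otimes(v_q+z)\bigr) - \Delta v_q - z + \nabla p_q = \mathrm{div}\,\mathring{R}_q,\qquad \mathrm{div}\,v_q = 0,
\end{equation*}
with $v_q(0) = 0$. At each step the perturbation $w_{q+1} = v_{q+1} - v_q$ is built from intermittent jets (or similar building blocks) localised in time by smooth cutoffs and indexed by parameters $(\lambda_{q+1}, r_\perp, r_\parallel, \mu_{q+1})$ controlling frequency and concentration; the low--high interaction $\mathrm{div}(w_{q+1}\otimes w_{q+1})$ is chosen to cancel $\mathrm{div}\,\mathring{R}_q$ to leading order, and the remaining error terms are regrouped through an antidivergence operator into the new stress $\mathring R_{q+1}$. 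Seeding the iteration with $v_0$ taken as a time-mollified version of $w$ (vanishing at $t=0$) feeds the prescribed profile into the scheme and is what allows (\ref{small deff.}) to be reached.

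The decisive novelty, replacing the $\omega$-wise stopping-time bounds of \cite{HZZ19, HZZ21a, HZZ21b}, is to carry expectation bounds through the iteration. I would adopt as inductive hypotheses
\begin{equation*}
\sup_{s\geq 0}\mathbb{E}\,\|\mathring{R}_q\|_{L^1([s,s+1];L^1)}^{\alpha} \leq \delta_{q+1}^\alpha,\qquad \sup_{s\geq 0}\mathbb{E}\,\|v_q\|_{L^p([s,s+1];L^\infty)}^{m_q} \leq M_q,
\end{equation*}
together with $C^N_{t,x}$ bounds on a finite number of derivatives of $v_q$ and $\mathring{R}_q$ with constants depending only on the parameters chosen up to step $q$. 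Because the nonlinearity is quadratic, closing the $L^\alpha$ bound on $\mathring R_{q+1}$ forces one to control higher-order moments $m_q > \alpha$ of $v_q$ and $z$; the key observation is that $M_q$ depends only on the data up through step $q$, so one may choose $\lambda_{q+1}, \mu_{q+1}$ and the amplitude functions at step $q+1$ small enough that the quadratic moment terms appearing in $\mathring R_{q+1}$ are absorbed into $\delta_{q+2}$. This moment bookkeeping across levels, while preserving adaptedness, is the principal obstacle, and it is exactly here that the damping $-\mathrm{I}z$ in (\ref{sto. low Re system}), which yields a uniform-in-$s$ bound for the sliding norms of $z$, becomes essential for reaching the global-in-time framework.

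With the iteration in place, the summability of $\|w_{q+1}\|$ in the norms of (\ref{additional regularities of solution}) makes $(v_q)$ Cauchy in $\bar L^\alpha(\Omega; Z^{p,r}) \cap \mathbf E_p$, and $\bar L^2(\Omega; L^2_sL^2)$-regularity of the limit $v$ follows from the iterate bounds. The limit $v$ is $\mathscr F_t$-adapted since each $v_q$ is; since $\mathring R_q \to 0$ in $\bar L^\alpha$, passing to the limit in the Navier--Stokes--Reynolds system shows that $v$ is an analytically weak solution of (\ref{random NS}), hence $u = v + z$ solves (\ref{sto. NS}) in the sense of Definition \ref{def. weak solu.} with initial datum $u_0$. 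The closeness (\ref{small deff.}) is obtained by arranging $\|v_0 - w\| < \varepsilon/2$ at the seeding step and $\sum_{q\geq 0}\|w_{q+1}\| < \varepsilon/2$ via the geometrically decaying $\delta_q$. Finally, infinitely many mutually distinct solutions arise by varying either the smooth vector field $w$ or the free amplitudes in the first perturbation, since (\ref{small deff.}) then forces the resulting solutions to disagree.
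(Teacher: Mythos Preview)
Your proposal is correct and follows essentially the same strategy as the paper: decompose $u=v+z$, seed the convex integration at $v^{(0)}=w$ (no mollification is needed since $w\in\mathcal C^1_{0,\sigma}$ already vanishes at $t=0$, so in fact $\|v^{(0)}-w\|=0$), and iterate via the Main Iteration Proposition~\ref{prop. main iteration} with $\delta_{q+1}=\min\{\|\mathring R^{(q)}\|^{1/2}_{\bar L^1(\Omega,L^1_sL^1)},\,\varepsilon/2^{q+1}\}$ so that the perturbations sum to less than $\varepsilon$ in the $Z^{p,r}$ and $\mathbf E_p$ norms. Two minor points of alignment: the paper uses stationary Mikado flows with separate temporal concentration (as in \cite{CL22}) rather than intermittent jets, and the stress is propagated only in the $\bar L^1$-moment (not $\bar L^\alpha$), with the needed higher moments supplied by the separate finiteness result Proposition~\ref{prop. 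Finiteness of All Moments of Supremum Norms}; but these are implementation choices within the scheme you describe.
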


\begin{remark} It is well-known that the martingale solutions constructed by Galerkin approximation satisfy the energy inequality:
\begin{align}
	\bigg[\mathbb E \Big(\|u(t)\|^{2}_{L^2}\Big)\bigg]^{1/2}
	&\leq\bigg(\|u_0\|^2_{L^2}+T\cdot{{\rm Tr}}\big[G^*G\big]\bigg)^{1/2}~,\nonumber\\
	&\leq \|u_0\|_{L^2}+T^{1/2}{{\rm Tr}}\big[G^*G\big]^{1/2}~,~~~\forall t\in[0, T].\nonumber
\end{align}
However, for $d\geq3$, due to lack of uniqueness, such solutions are not  probabilistically strong solutions. By choosing different special $w$ and using the $\mathbf E_p$-closeness in (\ref{small deff.}), we can obtain infinitely many different solutions that break  the energy inequality {on $[0,T]$, i.e.} 	
\begin{equation}\label{ill energy ineq.}
	\left\lbrace\int_{{0}}^{{T}}\bigg[\mathbb E \Big(\|u(t)\|^{2}_{L^2}\Big) \bigg]^{p/2}dt\right\rbrace^{1/p}>T^{1/p}\bigg(\|u_0\|_{L^2}+T^{1/2}{\rm Tr}\big[G^*G\big]^{1/2}\bigg)~.
	\end{equation}
\end{remark}\
	
The above result implies there exists infinitely many solutions such that $u-z\in L_s^pL^\infty$. If $z$ also have such regularity we could obtain infinitely many solutions $u\in L_s^pL^\infty$. The result then reads as follows and its proof is given in Sestion \ref{sec. proof}.
\begin{corollary}\label{corrola. further regularities} Let $1\leq\alpha<\infty$, $1\leq p<2$.
	 Assume that ${\rm Tr}\big[G^*(I-\Delta)^\lambda G\big]<\infty$ and $u_0\in L^{p_1},p_1\geq2$ with
	\begin{align}\label{further regularity scalings}
		\lambda>\frac{d}{2}-1\ \ \ \text{and} \ \ \ p_1>\frac{pd}{2},	
	\end{align}
	then there exist infinitely many different probabilistically strong and analytically weak global solutions to (\ref{sto. NS}) in $\bar L^\alpha\big(\Omega,L_s^pL^\infty\big)$ with the same initial data $u_0$ .
\end{corollary}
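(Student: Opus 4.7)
The plan is to derive the corollary from Theorem~\ref{prop. closeness to initial vector field} by showing that, under the strengthened hypotheses, the linear process $z$ itself lies in $\bar L^\alpha(\Omega,L^p_s L^\infty)$. Taking for instance $w\equiv 0$ in Theorem~\ref{prop. closeness to initial vector field} already produces infinitely many distinct probabilistically strong and analytically weak global solutions $u$ of (\ref{sto. NS}) with initial value $u_0$ such that $u-z\in\bar L^\alpha(\Omega;Z^{p,r})$, and the space $Z^{p,r}$ is designed to control in particular the norm $\sup_{s\ge 0}\|\cdot\|_{L^p([s,s+1];L^\infty)}$ in $L^\alpha(\Omega)$. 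Writing $u=(u-z)+z$ and applying the triangle inequality then reduces the corollary to the claim that $z\in\bar L^\alpha(\Omega,L^p_s L^\infty)$.

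To analyse $z$, I would split it as $z=z^{\mathrm{det}}+z^{\mathrm{sto}}$, where $z^{\mathrm{det}}(t)=e^{(\Delta-I)t}u_0$ and $z^{\mathrm{sto}}(t)=\int_0^t e^{(\Delta-I)(t-r)}\,{\rm dW}(r)$ is the stochastic convolution. For the deterministic piece, the heat-semigroup estimate $\|e^{t\Delta}f\|_{L^\infty}\lesssim t^{-d/(2p_1)}\|f\|_{L^{p_1}}$ together with the damping $-I$ gives
\begin{equation*}
\|z^{\mathrm{det}}(t)\|_{L^\infty}\lesssim e^{-t}\,t^{-d/(2p_1)}\|u_0\|_{L^{p_1}},
\end{equation*}
whose $L^p([s,s+1])$ norm is uniformly finite in $s\ge 0$ precisely when $pd/(2p_1)<1$, i.e.\ $p_1>pd/2$; large $s$ is handled by the exponential decay. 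For the stochastic part, the It\^o isometry combined with $\mathrm{Tr}[G^*(I-\Delta)^\lambda G]<\infty$ and parabolic smoothing yields, for any $\sigma<\lambda+1$,
\begin{equation*}
\sup_{t\ge 0}\mathbb E\|z^{\mathrm{sto}}(t)\|_{H^\sigma}^2\lesssim \mathrm{Tr}[G^*(I-\Delta)^\lambda G]\int_0^\infty r^{-(\sigma-\lambda)_+}e^{-2r}\,\mathrm d r.
\end{equation*}
Since $\lambda>d/2-1$, the window $(\lambda,\lambda+1)$ intersects $(d/2,\infty)$, so fixing any $\sigma\in(d/2,\lambda+1)$ and using $H^\sigma\hookrightarrow L^\infty$ controls $\sup_t\mathbb E\|z^{\mathrm{sto}}(t)\|_{L^\infty}^2$. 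Gaussian hypercontractivity of stochastic integrals lifts this to every finite moment, and Fubini--Minkowski then upgrades pointwise-in-$t$ moment bounds to the required bound on $\|z^{\mathrm{sto}}\|_{\bar L^\alpha(\Omega,L^p_s L^\infty)}$.

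The only delicate point is matching the two sharp thresholds in the hypotheses: $\lambda>d/2-1$ is exactly what guarantees the existence of an admissible Sobolev exponent $\sigma\in(d/2,\lambda+1)$, while $p_1>pd/2$ is exactly the threshold that renders the initial-time singularity of the $L^{p_1}\to L^\infty$ heat propagator $L^p$-integrable in time. Combining these two bounds with the splitting $u=(u-z)+z$ and Theorem~\ref{prop. closeness to initial vector field} completes the plan.
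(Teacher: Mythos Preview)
Your treatment of $z$ matches the paper's proof almost exactly: split $z=e^{(\Delta-I)t}u_0+W_{con}$, use the $L^{p_1}\to L^\infty$ heat estimate together with the damping to deduce that $p_1>pd/2$ makes the deterministic part lie in $\bar L^\alpha(\Omega,L^p_sL^\infty)$, and use ${\rm Tr}[G^*(I-\Delta)^\lambda G]<\infty$ with $\lambda>d/2-1$ to put the stochastic convolution there as well. Your argument for the stochastic part (It\^o isometry $+$ Sobolev embedding $H^\sigma\hookrightarrow L^\infty$ $+$ Gaussian hypercontractivity) is in fact more detailed than the paper, which simply asserts $W_{con}\in\bar L^\alpha(\Omega,C_sL^\infty)$.

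There is one imprecision in your non-uniqueness step. Applying Theorem~\ref{prop. closeness to initial vector field} with a single $w\equiv 0$ produces \emph{one} solution close to $z$; the ``in particular'' clause of that theorem gives infinitely many solutions only by varying $w$, and as stated it does not assert that each of those infinitely many solutions satisfies $u-z\in\bar L^\alpha(\Omega;Z^{p,r})$. The paper handles this cleanly by picking a sequence $\{w_n\}\subset\mathcal C^\infty_{0,\sigma}$ with $\sup_{s\ge0}\|w_i-w_j\|_{L^p_sL^\infty}\ge 1$, applying Theorem~\ref{prop. closeness to initial vector field} to each $w_n$ with $\varepsilon=1/3$, and then using the triangle inequality to show $\|u_i-u_j\|_{\bar L^\alpha(\Omega,L^p_sL^\infty)}\ge 1/3$. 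You should do the same: this both guarantees each $u_n-z\in\bar L^\alpha(\Omega;Z^{p,r})$ and certifies that the solutions remain distinct \emph{in the target space} $\bar L^\alpha(\Omega,L^p_sL^\infty)$.
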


\par The non-uniqueness of regularity $L^p_{s}L^\infty,1\leq p<2$ is sharp in the sense that the solution is unique in the space $L^2([0,T];L^\infty)$. This result is a stochastic extension of Theorem \ref{Thm. weak strong uniqueness} and its proof is given in Section \ref{sec. sto. weak-strong uniqueness}.

\begin{theorem}\label{thm. Stochastic Serrin's Criterion}
	{\bf (Pathwise Uniqueness for the Stochastic N-S System)} Let $d\geq2$,  and $0<T<\infty$ be arbitrarily fixed. There exists at most one solution $u$ to \eqref{sto. NS} satisfying 
	\begin{align}\label{Serrin's criterion}
		u\in X^{p,q}_T,~~\mathbb P-a.s.
	\end{align}
	for some $p\in[2,\infty]$ and $q\in(2,\infty]$ 
such that
	\begin{align}\label{Serrin scales}
		\frac{2}{p}+\frac{d}{q}\leq1~.
	\end{align}
	Moreover, the solution $u$ is Leray-Hopf and  $u\in L^2\Big(\Omega;C_{[0,T]}L^2\Big)\bigcap L^2\Big(\Omega;L^2_{[0,T]}H^1\Big)$
	 and satisfies the energy inequality:
	\begin{align}\label{expectational energy inequality (normal)}
		\frac{1}{2}\mathbb E\left\|u(t)\right\|_{L^2}^2+\int_0^t\mathbb E\left\|\nabla u(s)\right\|_{L^2}^2ds\leq\frac{1}{2}\mathbb E\left\|u_0\right\|_{L^2}^2+\frac{t}{2}{\rm Tr}\big[G^*G\big]~,~t\in[0,T].
	\end{align}
\end{theorem}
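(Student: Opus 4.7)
The plan is to adapt the deterministic Ladyzhenskaya--Prodi--Serrin argument (Theorem \ref{Thm. weak strong uniqueness}) to the stochastic setting. The key observation is that for two probabilistically strong solutions $u_1,u_2$ to \eqref{sto. NS} defined on the same probability space, sharing the initial condition $u_0$ and the noise $W$, the difference $w:=u_1-u_2$ solves the \emph{deterministic} (for each fixed $\omega$) drift equation
\begin{equation*}
  \partial_t w + \operatorname{div}(u_1\otimes w + w\otimes u_2) - \Delta w + \nabla \pi = 0,\qquad w(0)=0,\qquad \operatorname{div} w = 0,
\end{equation*}
because the stochastic increments cancel in the subtraction. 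Thus pathwise uniqueness reduces to a pathwise energy estimate on $w$ for each fixed $\omega$.

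Before testing this equation with $w$ itself, I would first upgrade each $u_i$ from a merely weak solution to the Leray--Hopf class $L^\infty_t L^2 \cap L^2_t H^1$ a.s. To this end, decompose $u_i = z + v_i$, where $z$ solves the linear stochastic Stokes system \eqref{sto. low Re system} and $v_i$ solves the random nonlinear PDE \eqref{random NS} pathwise. The stochastic convolution $z$ enjoys arbitrary spatial regularity and polynomial integrability in $\omega$, in particular $z \in X^{p,q}_T$ pathwise, so $v_i = u_i - z \in X^{p,q}_T$ pathwise as well. Classical parabolic energy arguments applied pathwise to \eqref{random NS} then yield $v_i \in L^\infty_t L^2 \cap L^2_t H^1$ a.s., and hence the same holds for $u_i$.

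With this regularity, testing the equation for $w$ against $w$ gives
\begin{equation*}
  \tfrac{1}{2}\tfrac{d}{dt}\|w\|_{L^2}^2 + \|\nabla w\|_{L^2}^2 = -\int (w\cdot\nabla)u_2 \cdot w \, dx,
\end{equation*}
the $u_1$-contribution vanishing by $\operatorname{div} u_1 = 0$. Integration by parts, H\"older's inequality, the Gagliardo--Nirenberg bound $\|w\|_{L^{2q/(q-2)}} \lesssim \|\nabla w\|_{L^2}^{d/q}\|w\|_{L^2}^{1-d/q}$, and Young's inequality with exponents $2/(1+d/q)$ and $2/(1-d/q)$ absorb $\|\nabla w\|_{L^2}^2$ into the dissipation and leave
\begin{equation*}
  \tfrac{d}{dt}\|w\|_{L^2}^2 \leq C\|u_2\|_{L^q}^{2/(1-d/q)}\|w\|_{L^2}^2.
\end{equation*}
The Serrin scaling $2/p + d/q \leq 1$ is precisely the condition $2/(1-d/q) \leq p$, so the coefficient lies in $L^1(0,T)$ pathwise, and Gr\"onwall's lemma yields $w \equiv 0$ a.s. The endpoints $p=\infty$ or $q=\infty$ are handled by a direct H\"older estimate. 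The expectational energy inequality \eqref{expectational energy inequality (normal)} then follows from It\^o's formula applied to $\|u\|_{L^2}^2$: the convective term vanishes by divergence-freeness, the It\^o correction produces $\operatorname{Tr}(G^*G)\,t$, and taking expectation kills the stochastic integral (a true martingale thanks to the Leray--Hopf integrability just established).

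The main obstacle is the first step: upgrading a solution assumed only to lie in $X^{p,q}_T$ to the Leray--Hopf class. This amounts to a pathwise parabolic regularity argument for \eqref{random NS} driven by the stochastic convolution $z$, with careful handling of the quadratic interactions $(v+z)\otimes(v+z)$ at the Serrin scale. Once this is settled, the remainder reduces to the classical deterministic Serrin computation together with a routine application of It\^o's formula.
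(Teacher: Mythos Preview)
Your overall strategy for pathwise uniqueness---subtract the two solutions so the noise cancels, then run the deterministic Serrin energy estimate with Gr\"onwall---is correct and coincides with the paper's argument once both solutions are known to be Leray--Hopf. (You do omit the critical endpoint $q=d$, which the paper treats separately by decomposing $u_1=\bar u_1+u_\varepsilon$ with $\|\bar u_1\|_{C_tL^d}$ small using the continuity of $u_1$ in $L^d$; your Gagliardo--Nirenberg/Young computation only covers the subcritical range $q>d$.)

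The genuine gap is in your upgrading step. You claim that the Ornstein--Uhlenbeck process $z$ ``enjoys arbitrary spatial regularity'' and hence $z\in X^{p,q}_T$ pathwise. Under the hypotheses of the theorem this is false: only ${\rm Tr}(G^*G)<\infty$ is assumed, so by Theorem~\ref{Thm. z regularity} the stochastic convolution is merely in $C^{1/2-\delta}_tL^2$, with no gain in spatial integrability. Even the deterministic part $e^{t(\Delta-I)}u_0$ fails to lie in $X^{p,q}_T$ for generic $u_0\in L^2$: the heat smoothing estimate gives $\|e^{t\Delta}u_0\|_{L^q}\lesssim t^{-\frac{d}{2}(\frac12-\frac1q)}\|u_0\|_{L^2}$, and along the critical line $2/p+d/q=1$ the required time integrability at $t=0$ fails (e.g.\ $d=3$, $p=4$, $q=6$). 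Consequently $v_i=u_i-z$ need not lie in $X^{p,q}_T$, and your proposed ``classical parabolic energy arguments applied pathwise to \eqref{random NS}'' cannot even be initiated; in fact, showing that a weak solution in $X^{p,q}_T$ is Leray--Hopf is precisely the nontrivial content of the deterministic Theorem~\ref{Thm. weak strong uniqueness}, and requires a duality argument rather than a direct energy test.

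The paper sidesteps this entirely. Instead of decomposing $u$, it constructs by Galerkin approximation a Leray--Hopf solution $\chi$ to the \emph{linearized} stochastic system ${\rm d}\chi=(-{\rm div}(u\otimes\chi)+\Delta\chi-\nabla p)\,{\rm d}t+{\rm d}W$ with the same initial data (Theorem~\ref{thm. existence of linearized sto. N-S}), where $u$ enters only as a fixed $X^{p,q}_T$-coefficient. The difference $\eta=u-\chi$ then solves a deterministic linear equation (the noise cancels), and is shown to vanish by testing against the solution of the backward adjoint system (Theorem~\ref{thm. regularity of linearized N-S}, quoted from \cite{CL22}). This gives $u=\chi\in C_tL^2\cap L^2_tH^1$ and the energy inequality \eqref{expectational energy inequality (normal)} comes for free from the Galerkin construction of $\chi$, without invoking It\^o's formula.
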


\par 
\subsection{Application to the stochastic Euler equations}
Theorem \ref{prop. closeness to initial vector field} also holds for the following stochastic Euler equation:
\begin{equation}
	\left\{\begin{aligned}
		&{\rm d}u(t)=\Big(-{\rm div}\big(u(t)\otimes u(t)\big)-\nabla p(t)\Big){\rm d}t+{\rm dW}(t),\\
		&{\rm div}\ u(t)=0,\\
        &u(0)=u_0.
	\end{aligned}\right.
	\label{sto. el}
\end{equation}

\begin{theorem}
	Under the same assumption as Theorem \ref{prop. closeness to initial vector field}, there exists infinitely many different probabilistically strong and analytically weak solutions to \eqref{sto. el} with the same given {$L^2$-}initial data.
\end{theorem}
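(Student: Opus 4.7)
The plan is to adapt the decomposition and the stochastic convex integration machinery developed for Theorem \ref{prop. closeness to initial vector field} to the inviscid setting. First I would write $u = v + z$, where $z$ is now the solution of the damped linear Ornstein--Uhlenbeck equation
\begin{equation*}
    dz(t) = -z(t)\,dt - \nabla P_1\,dt + dW(t),\quad \mathrm{div}\, z=0,\quad z(0)=u_0,
\end{equation*}
obtained from \eqref{sto. low Re system} by discarding the $\Delta z$ term while retaining the damping. Since $G$ is Hilbert--Schmidt, the same Ornstein--Uhlenbeck computation as in Theorem \ref{Thm. z regularity} yields uniform-in-time moment bounds of the form $\sup_{s\ge 0}\mathbb{E}\|z\|_{L^p([s,s+1];L^r)}^\alpha < \infty$, which is the only property of $z$ actually used in the Navier--Stokes iteration; the absence of parabolic smoothing on $z$ is immaterial at the level of the convex integration building blocks.

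Setting $v := u - z$, the remainder satisfies the random Euler-type equation
\begin{equation*}
    \partial_t v + \mathrm{div}\bigl[(v+z)\otimes(v+z)\bigr] = z - \nabla p,\quad \mathrm{div}\, v=0,\quad v(0)=0,
\end{equation*}
which coincides with \eqref{random NS} except for the missing $\Delta v$ term. I would then run exactly the same expectation-based convex integration scheme used to construct the non-unique $v$ in the proof of Theorem \ref{prop. closeness to initial vector field}: inductively produce a sequence $(v_q,\mathring R_q)$ solving an Euler--Reynolds relaxation, perturb $v_q$ by suitable intermittent building blocks (Mikado-type flows are adequate here, since no dissipation has to be absorbed), and control the new Reynolds stress by the same telescoping argument in $\bar L^\alpha(\Omega;Z^{p,r})\cap \mathbf E_p$. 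The crucial structural point identified in the main theorem carries over verbatim: the higher moments of $(v_q,\mathring R_q)$ at step $q$ depend only on parameters chosen up to step $q$, so the parameters at step $q+1$ can be tuned to defeat the quadratic nonlinearity and close the induction.

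The hardest aspect is conceptual rather than technical: in the Navier--Stokes scheme $\Delta v_q$ appears as an additional error that must be absorbed into $\mathring R_{q+1}$ via the inverse-divergence operator, and one might fear that removing $\Delta v$ from the equation forces a redesign of the perturbation ansatz. In reality the opposite is true: dropping $\Delta v_q$ strictly simplifies the stress balance, and the iterative moment bounds and convergence proceed without modification. Once $v = \lim_q v_q$ is obtained together with the closeness statement analogous to \eqref{small deff.}, the field $u = v + z$ is a global probabilistically strong and analytically weak solution to \eqref{sto. el}, and varying the target profile $w\in \mathcal C^1_{0,\sigma}$ produces infinitely many mutually distinct solutions sharing the prescribed $L^2$ initial datum.
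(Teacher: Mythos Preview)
Your proposal is correct and matches the paper's own approach: the whole machinery in Sections~2--4 is written with a viscosity parameter $\nu\in\{0,1\}$ (cf.\ Theorem~\ref{Thm. z regularity} for the stochastic convolution with $\nu=0$, and the $\nu\Delta$ terms in \eqref{mollified Reynold system} and \eqref{definition of R_lin}), so setting $\nu=0$ yields the Euler result with no changes beyond the simplification you noted, namely that the $\nu\Delta\omega_{q+1}$ contribution to $\mathring R_{lin}^{(q+1)}$ drops out. The infinitude of solutions then follows by varying $w\in\mathcal C^1_{0,\sigma}$ exactly as in the proof of Corollary~\ref{corrola. further regularities}.
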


\par In the two-dimensional case stochastic Euler equations have been studied in \cite{Be99, BF99, BFM16, BP01, GHV14}. The three-dimensional case has been treated in \cite{CFH19, GHV14, Ki09, MV00, HZZ21b}. In particular, Glatt-Holtz and Vicol \cite{GHV14} obtained local well-posedness of strong solutions to stochastic Euler equations in two and three dimensions, global well-posedness in two dimensions for additive and linear multiplicative noise.  Hofmanov\'{a}, Zhu and the third named author \cite{HZZ21b} established existence and non-uniqueness of global dissipative martingale solutions for additive noise. We emphasize that before our work there's no result for the existence of probabilistically strong solutions to the stochastic Euler equations in higher dimensions. Our result is the first one on this point.

Moreover, by the interpolation $L^{3/2-}_{s}C^{1/3}\supset L^p_{s}L^\infty\cap L^1_{s}C^{1-}$ for some  $1\leq p<2$, we obtain as a byproduct the existence of infinitely many non-conserving solutions in $L^{\alpha}\Big(\Omega;L^{3/2-}_{loc}\big([0,\infty);C^{1/3}\big)\Big)\bigcap$ $L^{\alpha}\Big(\Omega;L^1_{loc}\big([0,\infty);C^{1-}\big)\Big)$ to the stochastic Euler system \eqref{sto. el}. This gives the first stochastic version of the Onsagar's conjecture in negative direction with an exact "$1/3$-H\"older regularity" in spacial variables.

\begin{definition}
	A probabilistically strong and analytically weak solution (in the sense of Definition \ref{def. weak solu.} without the Laplacian term) $u$ to (\ref{sto. el}) is said to be conserving, if $~\mathbb P-a.s.$ , $$\left\|u(t)\right\|_{L^2}=\|u_0\|_{L^2}{+2\int_0^t\big\langle u(t),{\rm d}W(t)\big\rangle_{L^2}+t\cdot{\rm Tr}\big[G^*G\big]}\ \ \ \ \  \text{for all}\ \  t\in [0,\infty).$$ Otherwise, we say that the solution $u$ is non-conserving.
\end{definition}

\begin{theorem}
	Let $d\geq2$, $1\leq\alpha<\infty$, $\varepsilon>0$ and $u_0\in W^{1,\infty}$ be arbitrarily given. And let $G$ satisfies ${\rm Tr}\big[G^*(I-\Delta)^{{\frac{d}{2}+1}}G\big]<\infty$ . Then there exist infinitely many {different non-conserving probabilistically strong and analytically weak solutions} $L^\alpha\Big(\Omega;L^{3/2-\varepsilon}_{loc}\big([0,\infty);C^{1/3}\big)\Big)\bigcap L^\alpha\Big(\Omega;L^1_{loc}\big([0,\infty);C^{1-\varepsilon}\big)\Big)$ to the stochastic Euler system (\ref{sto. el}){, with initial data $u_0$} .
\end{theorem}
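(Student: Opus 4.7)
The strategy combines a stochastic convex integration scheme for the Euler system with a Hölder-type interpolation in space. Decompose $u=v+z$, where $z$ solves the damped linear stochastic equation from \eqref{sto. low Re system} and $v$ solves the correspondingly shifted nonlinear equation (with the Laplacian dropped for the Euler setting, and the damping of $z$ absorbed into the $v$-equation as in \eqref{random NS}). The assumption $\mathrm{Tr}[G^*(I-\Delta)^{d/2+1}G]<\infty$ is chosen precisely so that, by standard stochastic calculus together with the damping, $z\in L^\alpha\big(\Omega;C^{1-\varepsilon/2}\big)$ in space with moments controlled uniformly in time; the $W^{1,\infty}$ hypothesis on $u_0$ ensures the same for the initial data.

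The core construction is a stochastic Euler convex integration iteration, built in the spirit of the scheme behind the preceding Euler non-uniqueness theorem, together with the expectation-based iteration mechanism introduced for Theorem~\ref{prop. closeness to initial vector field}. The iteration produces approximate velocity fields $v_q$ and Reynolds stresses $\mathring{R}_q$, driven by intermittent building blocks of spatial frequency $\lambda_q\nearrow\infty$ and amplitude $\delta_q^{1/2}\searrow 0$, localized by temporal cutoffs of small total measure. By balancing $(\lambda_q,\delta_q)$ and the cutoffs, one arranges
\begin{equation*}
\sum_{q\geq 0}\big\|v_{q+1}-v_q\big\|_{L^p_sL^\infty}<\infty\quad\text{and}\quad\sum_{q\geq 0}\big\|v_{q+1}-v_q\big\|_{L^1_sC^{1-\varepsilon/2}}<\infty,
\end{equation*}
for some $1\leq p<2$, together with their $L^\alpha(\Omega)$ bounds supplied by the expectation-iteration method of the paper. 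The first bound uses the same parameter tuning as in the Navier--Stokes case (Theorem~\ref{prop. closeness to initial vector field}); the second exploits that the $C^{1-\varepsilon/2}$ norms of each building block, though large, are integrated over time sets of shrinking total length. Infinitely many distinct limits $v$ are produced by running the scheme toward infinitely many distinct targets $w$.

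Given $v\in L^\alpha(\Omega;L^p_sL^\infty)\cap L^\alpha(\Omega;L^1_sC^{1-\varepsilon/2})$, the same integrability for $u=v+z$ follows from the regularity of $z$. Spatial interpolation
\begin{equation*}
\|u\|_{C^{1/3}}\leq C\|u\|_{L^\infty}^{1-\theta}\|u\|_{C^{1-\varepsilon/2}}^{\theta},\qquad\theta=\frac{1/3}{1-\varepsilon/2},
\end{equation*}
followed by Hölder in time with exponents satisfying $\frac{(1-\theta)s}{p}+\theta s=1$ yields $u\in L^s_sC^{1/3}$; for $p$ close to $2$ and $\varepsilon$ small, $s$ can be taken arbitrarily close to $3/2$ from below, giving the claimed $L^{3/2-\varepsilon}_sC^{1/3}$ membership, while the $L^1_sC^{1-\varepsilon}$ membership is already in hand. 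Non-conservation is enforced by prescribing inside the convex integration a time-varying target energy profile $e(t)$ for $\|v_q\|_{L^2}^2$ differing from the Itô energy identity (as in \eqref{ill energy ineq.}), which is compatible with the $\mathbf{E}_p$-closeness to any chosen $w$. The main obstacle is to synchronise the supercritical $L^p_sL^\infty$ control (which requires $p<2$) with the stronger $L^1_sC^{1-\varepsilon/2}$ control in a single scheme while preserving the paper's expectation-based closure in $\omega$ at every step; this forces a delicate choice of cutoff supports and amplitudes so that the higher spatial norms induced at step $q$ are beaten by the small temporal measure on which they are active, and so that the higher $\omega$-moments controlled at step $q$ can be reabsorbed by the parameters fixed at step $q+1$.
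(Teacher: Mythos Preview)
Your approach is essentially the paper's own, and the interpolation step is exactly the one sketched there. The main point you are missing is that no new ``delicate synchronisation'' is required: the $L^1_sC^{1-\varepsilon/2}$ control you seek is already contained in the $Z^{p,r}$-norm of Theorem~\ref{prop. closeness to initial vector field} (and its Euler analogue). Recall $Z^{p,r}_s = C_sW^{-1,1}\cap L^p_sL^\infty\cap L^1_sW^{1,r}$, so the main iteration (Proposition~\ref{prop. main iteration}) already delivers $u-z\in \bar L^\alpha(\Omega;L^1_sW^{1,r})$ for \emph{every} $1\le r<\infty$, simultaneously with the $L^p_sL^\infty$ bound. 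Choosing $r>d$ large and invoking the Morrey embedding $W^{1,r}\hookrightarrow C^{1-d/r}$ gives $u-z\in L^\alpha(\Omega;L^1_sC^{1-\varepsilon})$ for free; the hypotheses $\mathrm{Tr}[G^*(I-\Delta)^{d/2+1}G]<\infty$ and $u_0\in W^{1,\infty}$ then put $z$ (hence $u$) in the same space. Your interpolation argument then applies verbatim.

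For non-conservation you do not need to prescribe an energy profile inside the scheme (which would be an additional ingredient not present in this iteration). The $\mathbf{E}_p$-closeness to an arbitrary $w\in\mathcal C^1_{0,\sigma}$ in Theorem~\ref{prop. closeness to initial vector field} already allows you to force $\big(\mathbb{E}\|u(t)\|_{L^2}^2\big)^{1/2}$ to be as large as you wish on any time interval (as in the remark following that theorem), which is incompatible with the It\^o energy identity; running the construction over infinitely many such $w$ gives infinitely many distinct non-conserving solutions.
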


\subsection{Notations}
Throughout the paper, we employ the notation $a\lesssim b$ if there exists a constant $c>0$ such that $a\leq cb$. $\mathbb N_0=\mathbb{N}\cup\{0\}$.
Let $\mathcal{S}'$ be the space of distributions on $\mathbb{T}^d$. For $s\in\mathbb R$ and $1\leq p\leq\infty$, the Sobolev space $W^{s,p}=\big\{f\in\mathcal{S}':\big\|f\big\|_{W^{s,p}}:=\big\|(I-\Delta)^{s/2}f\big\|_{L^p}<\infty\big\}$. 

\par Given a Banach space $\Big(Y,~\|\cdot\|_Y\Big)$ and $I\subset \mathbb{R}$ we write $L^p(I;Y)$ for the space of $L^p$-integrable functions from $I$ to $\mathbb{R}$, equipped with the usual $L^p$-norm. We also use $L^p_IY$ to denote the space of functions $f$ from $[0,\infty)$ to $Y$ satisfying $f|_I\in L^p(I,Y)$. We also use $L^p_{loc}([0,\infty);Y)$ to denote the space of functions $f$ from $[0,\infty)$ to $Y$ satisfying $f\in L^p_{[0,T]}Y$ for all $T>0$. We also write $C(I;Y)$ for the space of continuous functions from $I$ to $Y$ equipped with the supremum norm in a bounded subset. $C_IY$ is similar as $L^p_IY$  with $f|_I\in C(I,Y)$. We also use $L^\alpha(\Omega,Y)$ to denote the space of functions on $\Omega$ with finite $\alpha$ moment, equipped with the usual $L^\alpha$-norm.
  Whenever $I=[s,s+1]$, we simply write $L^p_sY:=L^p_{[s,s+1]}Y$ and $C_sY:=C_{[s,s+1]}Y$.
\par For smooth tensor fields, we use the following notations:
\begin{align}
	&C^k_{c,\sigma}:=C^k_{c,\sigma}\left([0,\infty)\times\mathbb T^d\right),~~C^k_{\sigma}:=C^k_{\sigma}\left([0,\infty)\times\mathbb T^d\right),~~C^k_{t,x}:=C^k_{}\left([0,\infty)\times\mathbb T^d\right)\nonumber
\end{align}
where $k\in\mathbb N_0\cup\lbrace\infty\rbrace$ and the indices "$c$" and "$\sigma$" mean "compact support in time" and "divergence-free", respectively. And for $C^N$-norms and semi-norms of function $f\in C^N_{t,x}$, we write
 \begin{align}
 	\big\lbrack f(t)\big\rbrack_m &:=\sum_{|\alpha|=m,\alpha\in\mathbb{N}_0^3}\sup_{x\in\mathbb T^d}\Big|\partial_x^\alpha f(t,x)\Big|~~~,~~~t\in\mathbb R~;\nonumber\\
 	\big\lbrack f\big\rbrack_{m,[a,b]} &:=\sum_{|\alpha|+k=m,\alpha\in\mathbb{N}_0^3,k\in\mathbb{N}_0}\sup_{(t,x)\in[a,b]\times\mathbb T^d}\Big|\partial_t^k\partial_x^\alpha f(t,x)\Big|~~~,~~~-\infty<a<b<\infty~;\nonumber
 \end{align}
 for $0\leq m\leq N$, and
 \begin{align}
 	\big\|f(t)\big\|_N
 	&:=\sum_{m=0}^N\big\lbrack f(t)\big\rbrack_m~~~,~~~t\in\mathbb R~;\nonumber\\
 	\big\|f\big\|_{N,[a,b]}
 	&:=\sum_{m=0}^N\big\lbrack f\big\rbrack_{m,[a,b]}~~~,~~~-\infty<a<b<\infty~.\nonumber
 \end{align}
 For simplicity, we  write $\big\lbrack f\big\rbrack_{m,s}:=\big\lbrack f\big\rbrack_{m,[s,s+1]}$ and $\big\|f\big\|_{N,s}:=\big\|f\big\|_{N,[s,s+1]}$ for $s\geq0$.
\\


We define the spaces for $1\leq p<2$ and $\alpha, r\geq1$ :
\begin{align}\label{def. sup-C^infty space}
	\mathcal C^1_{0,\sigma}:=\left\lbrace u\in C^1_{\sigma}~\Big|~\big\|u\big\|_{C^1_{\sigma}\left([0,\infty)\times\mathbb T^d\right)}<\infty~,u(0)=0\right\rbrace~;\ \ \ \ \ \ \ \ \ \ \ \ \ \ \ \ \ \ \
\end{align}
 \begin{align}
 	Z^{p,r}_{s}
 	&:=C_{s}W^{-1,1}\bigcap L^p_{s}L^\infty\bigcap L^1_{s}W^{1,r},\nonumber\\
 	\textrm{with}&\quad\|u\|_{{p,r};s}:=\|u\|_{C_{s}W^{-1,1}}+\|u\|_{L^p_{s}L^\infty}+\|u\|_{L^1_{s}W^{1,r}},~s\geq0~;
 \end{align}
\begin{align}\label{def. Z-space}
	\bar L^\alpha(\Omega,Z^{p,r})
 	&:=\left\lbrace u:\Omega\times[0,\infty)\longrightarrow W^{-1,1}\bigcap L^\infty\bigcap W^{1,r} ~\bigg|~~~\big\|u\big\|_{\bar L^\alpha(\Omega,Z^{p,r})}:=\sup_{s\geq0}\left(\mathbb E\big\|u\big\|_{p,r;s}^\alpha\right)^{1/\alpha}<\infty\right\rbrace .
\end{align}
For stochastic processes $u:\Omega\times[0,\infty)\longrightarrow Y$, we introduce the following notations for $\alpha,p\geq1$ :
 \begin{align}
 	\bar L^\alpha(\Omega,L^p_sY)
 	&:=\left\lbrace u:\Omega\times[0,\infty)\longrightarrow Y~\bigg|~~~\big\|u\big\|_{\bar L^\alpha(\Omega,L^p_sY)}:=\sup_{s\geq0}\left(\mathbb E\big\|u\big\|_{L^p_{s}Y}^\alpha\right)^{1/\alpha}<\infty\right\rbrace ,\label{def. [L,L,Y]-spaces}\\
 	\bar L^\alpha(\Omega,C_sY)
 	&:=\left\lbrace u:\Omega\times[0,\infty)\longrightarrow Y~\bigg|~~~\big\|u\big\|_{\bar L^\alpha(\Omega,C_sY)}:=\sup_{s\geq0}\left(\mathbb E\big\|u\big\|_{C_{s}Y}^\alpha\right)^{1/\alpha}<\infty\right\rbrace ,\nonumber\\
 	\mathbf E_p&:=\left\lbrace u:\Omega\times[0,\infty)\longrightarrow Y~\bigg|~~~\big\|u\big\|_{\mathbf E_p}:=\sup_{s\geq0}\big\|u\big\|_{L^p_sL^2\left(\Omega;~L^\infty\right)}<\infty\right\rbrace .\label{def. E_p space}
 \end{align}
  It's easy to see that $\bar L^\alpha(\Omega,L^p_sY)\subset L^\alpha\Big(\Omega,L^p_{loc}\big([0,\infty);Y\big)\Big)$ and $\bar L^\alpha(\Omega,C_sY)
 	\subset L^\alpha\Big(\Omega,C\big([0,\infty);Y\big)\Big)$.
\

\subsection{Organization of the paper}
In Section 2, we give the proof of Theorem \ref{prop. closeness to initial vector field} and Corollary \ref{corrola. further regularities}  assuming the main iteration Proposition \ref{prop. main iteration}. The proof of Proposition \ref{prop. main iteration} is given in Section \ref{sec. Construction of the Iteration} and \ref{sec. Inductive Estimates}: We construct the velocity perturbation and the new Reynold Stress  in Section \ref{sec. Construction of the Iteration}. Estimate of the perturbation and Reynold stress error is presented in Section \ref{sec. Inductive Estimates}. Section \ref{sec. sto. weak-strong uniqueness} is devoted to the proof of Theorem \ref{thm. Stochastic Serrin's Criterion}. In  Appendix we collect several auxiliary results.

\section{Proof of Theorem \ref{prop. closeness to initial vector field}} \label{sec. proof}
This section is devoted to the proof of Theorem \ref{prop. closeness to initial vector field} and Corollary \ref{corrola. further regularities}. More precisely, by means of the convex integration method we construct infinitely many global-in-time probablistically strong solutions to the Navier-Stokes system \eqref{sto. NS}. Different from the previous work using convex integration in stochastic setting by introducing suitable stopping times, we take expectation in the convex integration iterative estimates and construct directly solutions on the whole time interval $[0,\infty)$. The key point is that the $m$-th moment of approximate velocity and the error at step $q$ only depends on $m$ and the parameters up to the step $q$ of the iteration. Then we could choose the parameters at the level $q+1$ to guarantee smallness of the velocity perturbations and the error at step $q+1$.
\par As the first step, we recall the following regularity result for the linear equation. The linear system (\ref{sto. low Re system}) is known to be well-posed and the solution $z$ can be represented as follows:
\begin{equation}\label{z represeantation}
	z(t)=e^{t(\Delta-{\rm I})}u_0+W_{con}(t)~,\ \ t\in[0,\infty) ~,
\end{equation}
where $W_{con}(t):=\mathbf{P}\int_0^t e^{(t-s)(\Delta-{\rm I})}{{\rm dW}(s)}$ with the Helmholtz projection $\mathbf{P}$.
The following result is well known and can be obtained by using the method in \cite{Da04}.

\begin{theorem}\label{Thm. z regularity}
Let $W_{con}(t):=\mathbf{P}\int_0^t e^{(t-s)(\nu\Delta-{\rm I})}{{\rm dW}(s)}$ with $\nu=0,1$. For $0<\delta<1/2$ and $1\leq m<\infty$, there exists finite constant $C_{m,\delta}>0$   such that
\begin{equation}\label{z finiteness of moments}
	\sup_{s\geq0}\mathbb E\Big(\big\|W_{con}\big\|^m_{C^{{1/2-\delta}}_{[s,s+1]}L^2}\Big)\leq{C_{m,\delta}}.
\end{equation}
\end{theorem}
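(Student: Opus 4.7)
The plan is to prove the stated moment bound by the factorization method of Da Prato--Kwapie\'n--Zabczyk, exploiting the exponential damping built into the semigroup $e^{t(\nu\Delta-I)}=e^{-t}e^{t\nu\Delta}$. The decay factor $e^{-t}$ is what upgrades the usual $T$-dependent Hilbert-space estimates for stochastic convolutions into estimates that are uniform in $s\geq 0$, which is the essential point of the theorem.

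Concretely, fix $\delta\in(0,1/2)$, choose an auxiliary exponent $\alpha\in(\delta,1/2)$ and, for $p>1/\alpha$ large enough that $\alpha-1/p\geq 1/2-\delta$, write
\[
W_{con}(t)=\frac{\sin(\pi\alpha)}{\pi}\int_{0}^{t}(t-\sigma)^{\alpha-1}e^{(t-\sigma)(\nu\Delta-I)}Y(\sigma)\,\mathd\sigma,
\qquad
Y(\sigma):=\int_{0}^{\sigma}(\sigma-r)^{-\alpha}e^{(\sigma-r)(\nu\Delta-I)}\mathbf{P}\,\mathd W(r).
\]
Then I would first control $Y$: since $Y(\sigma)$ is a centered Gaussian element of $L^2$, Itô isometry and the Hilbert--Schmidt bound $\|e^{(\sigma-r)(\nu\Delta-I)}\mathbf{P}G\|_{HS}\leq e^{-(\sigma-r)}\|G\|_{HS}$ give
\[
\sup_{\sigma\geq 0}\mathbb{E}\|Y(\sigma)\|_{L^2}^{2}
\leq \|G\|_{HS}^{2}\sup_{\sigma\geq 0}\int_{0}^{\sigma}(\sigma-r)^{-2\alpha}e^{-2(\sigma-r)}\,\mathd r<\infty,
\]
and Gaussianity upgrades this to a uniform-in-$\sigma$ bound on all $L^{2m}$ moments. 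Integrating in $\sigma$ on $[s,s+1]$ and using stationarity of the convolution kernel then yields $\sup_{s\geq 0}\mathbb{E}\|Y\|_{L^{2p}([0,s+1];L^{2})}^{2m}<\infty$, where again the damping makes the bound uniform in $s$.

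The second step is the deterministic factorization lemma: the operator $R_\alpha f(t):=\int_0^t(t-\sigma)^{\alpha-1}e^{(t-\sigma)(\nu\Delta-I)}f(\sigma)\,\mathd\sigma$ maps $L^{2p}([0,T];L^2)$ into $C^{\alpha-1/(2p)}([0,T];L^2)$. A short computation for the increment $R_\alpha f(t_1)-R_\alpha f(t_2)$, splitting the integral into the near-diagonal part $[t_2,t_1]$ and the past $[0,t_2]$ and using $\|e^{h(\nu\Delta-I)}-I\|_{\mathcal{L}(L^2)}\lesssim h^\beta$ combined with the $e^{-h}$ decay, gives this bound with a constant that is independent of the starting time $s$. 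Composing with the estimate for $Y$ produces $\sup_{s\geq 0}\mathbb{E}\|W_{con}\|_{C^{1/2-\delta}_{[s,s+1]}L^2}^{m}\leq C_{m,\delta}$.

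The only delicate point I foresee is bookkeeping: ensuring that each constant that appears (the Hilbert--Schmidt norm of $G$, the $\sigma$-integral of $(\sigma-r)^{-2\alpha}e^{-2(\sigma-r)}$, and the constant in the factorization lemma) is truly independent of $s$. This is precisely where the $-I$ in the drift of \eqref{sto. low Re system} is used, and also why the same statement applies simultaneously to $\nu=0$ (Euler) and $\nu=1$ (Navier--Stokes). After that, the Hölder exponent $1/2-\delta$ is dictated by choosing $p$ large and $\alpha$ arbitrarily close to $1/2$, which is allowed since only finitely many moments are required.
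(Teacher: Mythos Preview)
The paper does not actually prove this theorem; it simply states that ``the following result is well known and can be obtained by using the method in \cite{Da04}''. Your factorization approach is exactly that method, and you have correctly identified the key structural point: the damping $-I$ in the semigroup $e^{t(\nu\Delta-I)}$ is what makes every constant independent of the window $[s,s+1]$.

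There is one genuine slip to repair. The bound $\|e^{h(\nu\Delta-I)}-I\|_{\mathcal L(L^2)}\lesssim h^\beta$ is false for $\nu=1$: in Fourier variables the symbol is $e^{-h(1+|k|^2)}-1$, whose supremum over $k$ equals $1$ for every $h>0$, so the heat semigroup is not norm-continuous on $L^2$. Two easy fixes are available. First, you can bypass the operator-norm estimate by working mode-wise and using $|1-e^{-h\lambda}|^2\leq h\lambda$ together with $\int_0^\infty \lambda e^{-2u\lambda}\,du=\tfrac12$; this yields directly $\mathbb E\|W_{con}(t)-W_{con}(r)\|_{L^2}^2\lesssim \|G\|_{HS}^2\,|t-r|$ uniformly in $t,r\geq0$, and Gaussianity plus Kolmogorov then gives the claim on each $[s,s+1]$ with $s$-independent constants. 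Second, if you prefer to stay within the factorization framework, split $W_{con}(t)=S(t-(s-1))W_{con}(s-1)+\int_{s-1}^t S(t-r)\mathbf P\,\mathd W(r)$ for $t\in[s,s+1]$; the stochastic convolution on the fixed-length window $[s-1,s+1]$ is handled by the standard factorization lemma, while for the first term one has $t-(s-1)\in[1,2]$ and on that range $\|S(h_1)-S(h_2)\|_{\mathcal L(L^2)}\lesssim|h_1-h_2|$ does hold (trivially for $\nu=0$, and for $\nu=1$ via analyticity: $\|AS(r)\|\lesssim r^{-1}e^{-r}$ is bounded for $r\geq1$). Either route closes the argument with constants uniform in $s$.
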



\par Let us now explain how the convex integration is set up. More precisely, we intend to develop an iteration procedure leading to the proof of Theorem \ref{prop. closeness to initial vector field}. The iteration is indexed by a parameter $q\in\mathbb{N}_0$. At each step $q$ , a pair $\left(v^{(q)},\mathring{R}^{(q)}\right)$ is constructed solving the following system
\begin{equation}\label{random Rey.}
	\left\{\begin{aligned}
		&\partial_tv^{(q)}+{\rm div}\left[\big(v^{(q)}+z\big)\otimes \big(v^{(q)}+z\big)\right]=-\nabla p^{(q)}+\Delta v^{(q)}+z+{\rm div}\mathring{R}^{(q)},\\
		&{\rm div}\ v^{(q)}\equiv 0,\\
		&v^{(q)}(0)=0.\\
	\end{aligned}\right.
\end{equation}
Here the trace-free $d\times d$ matrix $\mathring{R}^{(q)}(t, x)$ is the so-called Reynold stress term.

The iteration starts at
\begin{align}\label{start point of the iteration}
	v^{(0)}=w,~~~\mathring{R}^{(0)}=\mathcal{R}\big(\partial_tw-\Delta w-z\big)+(w+z)~\mathring{\otimes}~(w+z)~,
\end{align}
where $w\in{\mathcal C^{1}_{0,\sigma}}$ is the pre-given vector field as in Proposition \ref{prop. closeness to initial vector field} and $\mathcal{R}$ denotes the reverse-divergence operator which we recall in Appendix \ref{Appendix B} for convenience. Note that by definition, $v^{(0)}(0)=w(0,\cdot)=0$. By (\ref{z finiteness of moments}), we have $\mathbb{P}-a.s.$ $\mathring{R}^{(0)}\in C_{s}L^1$ for all $s\geq0$ and
\begin{align}
	\left\|\mathring{R}^{(0)}\right\|_{\bar L^1(\Omega,L_s^1L^1)}
	&\lesssim{\sup_{s\geq0}\big\|\partial_t w\big\|_{L_s^1L^1}+\sup_{s\geq0}\big\|w\big\|_{L_s^1H^1}+\big\|z\big\|_{\bar L^2(\Omega,L_s^2L^2)}+\sup_{s\geq0}\big\|w\big\|_{L_s^2L^2}^2+\big\|z\big\|_{\bar L^2(\Omega,L_s^2L^2)}^2}\nonumber\\
	&{\lesssim1+\sup_{s\geq0}\big\|w\big\|_{1,s}^2+\big\|z\big\|_{\bar L^2(\Omega,L_s^2L^2)}^2}<\infty~.\nonumber
\end{align}
The main ingredient in the proof of Theorem \ref{prop. closeness to initial vector field} is the following iteration.

\begin{proposition}\label{prop. main iteration}
	{\bf (Main Iteration)} Let $1\leq\alpha,r<\infty$, $1\leq p<2$, $u_0\in L^2_\sigma$. 
Let also $\delta>0$ be arbitrarily given. If  $(v^{(q)},\mathring{R}^{(q)}) \in C^\infty_\sigma\times C_{[0,\infty)}L^1$ is an $(\mathcal{F}_t)_{t\geq0}$-adapted solution to \eqref{random Rey.}, then there exists an $(\mathcal{F}_t)_{t\geq0}$-adapted process $(v^{(q+1)}, \mathring{R}^{(q+1)})\in C^\infty_\sigma\times C_{[0,\infty)}L^1$ which solves \eqref{random Rey.} and satisfies
	\begin{equation}\label{smallness of Rey.}
		\left\|\mathring{R}^{(q+1)}\right\|^{1/2}_{\bar L^1(\Omega,L^1_sL^1)}\leq\delta~;
	\end{equation}
	 and 
	\begin{equation}\label{L2 smallness of pert.}
	\big\|v^{(q+1)}-v^{(q)}\big\|_{\bar L^2(\Omega,L^2_sL^2)}\leq C\left(\left\|\mathring{R}^{(q)}\right\|^{1/2}_{\bar L^1(\Omega,L^1_sL^1)}+2^{-q}\right)+\delta~
	\end{equation}
	with some constant $C>0$ independent of $q$ , and
	\begin{align}\label{weak smallness of pert.}
		\big\|v^{(q+1)}-v^{(q)}\big\|_{\bar L^{\alpha}\left(\Omega,Z^{p,r}\right)}+\big\|v^{(q+1)}-v^{(q)}\big\|_{\mathbf E_p}
	   \leq\delta~.
	\end{align}
\end{proposition}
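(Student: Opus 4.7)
The plan is to execute one step of a convex integration scheme using intermittent jets (in the spirit of Buckmaster-Colombo-Vicol and Cheskidov-Luo), with every estimate carried out in $L^m(\Omega)$ rather than pathwise. I would begin by mollifying $v^{(q)}$ and $\mathring{R}^{(q)}$ at a small scale $\ell=\ell(q{+}1)$ to obtain smooth adapted processes $v^{(q)}_\ell,\,\mathring{R}^{(q)}_\ell$; to preserve $(\mathcal{F}_t)$-adaptedness I would mollify in space only, or in time with a backward delay. The mollification error contributes a term of order $2^{-q}$ in $\bar L^2(\Omega;L^2_sL^2)$, which is precisely what appears on the right-hand side of \eqref{L2 smallness of pert.}.

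Next, I would define the principal perturbation as a sum $w^{(p)}_{q+1}=\sum_k a_k(\omega,t,x)\,W_k(\lambda_{q+1}x,\mu_{q+1}t)$ of intermittent jets of spatial frequency $\lambda_{q+1}$ and concentrations $(r_\perp,r_\parallel)_{q+1}$, where the scalar amplitudes $a_k$ are built from $\mathring{R}^{(q)}_\ell$ via a geometric lemma so that
\begin{equation*}
\sum_k a_k^2\,W_k\otimes W_k=-\mathring{R}^{(q)}_\ell+\nabla(\text{pressure})+(\text{high-frequency error}).
\end{equation*}
Standard divergence and temporal correctors $w^{(c)}_{q+1},w^{(t)}_{q+1}$ are added so that the total increment $w^{(q+1)}:=w^{(p)}_{q+1}+w^{(c)}_{q+1}+w^{(t)}_{q+1}$ is divergence-free and vanishes at $t=0$ (the latter via a smooth temporal cutoff). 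Because each $a_k$ is a smooth function of an adapted process, the whole $w^{(q+1)}$ is $(\mathcal{F}_t)$-adapted.

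I would then set $v^{(q+1)}:=v^{(q)}+w^{(q+1)}$ and compute $\mathring{R}^{(q+1)}$ via the inverse-divergence $\mathcal{R}$ of Appendix \ref{Appendix B}, splitting into the standard linear, Nash, oscillation, corrector and mollification errors, plus a new \emph{compatibility} error $\mathcal{R}\operatorname{div}(w^{(q+1)}\otimes z+z\otimes w^{(q+1)})$ generated by the stochastic term. The oscillation piece cancels $\mathring{R}^{(q)}_\ell$ up to a high-frequency remainder that $\mathcal{R}$ tames through a $\lambda_{q+1}^{-1}$ gain. Estimates on $z$ from Theorem \ref{Thm. z regularity} yield finite $L^m(\Omega)$ moments on every $[s,s{+}1]$ uniformly in $s$, which is exactly what is needed to control this new error. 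For \eqref{L2 smallness of pert.}, the geometric lemma gives $\|w^{(p)}_{q+1}\|_{L^2_sL^2}\approx\|\mathring{R}^{(q)}_\ell\|_{L^1_sL^1}^{1/2}$; taking expectations and absorbing the mollification loss produces the advertised $C(\|\mathring{R}^{(q)}\|_{\bar L^1(\Omega;L^1_sL^1)}^{1/2}+2^{-q})+\delta$ bound. The smallness in $\bar L^\alpha(\Omega;Z^{p,r})$ and $\mathbf E_p$ comes from the intermittency of the jets: the $L^p_sL^\infty$ and $L^1_sW^{1,r}$ norms scale with negative powers of the concentration parameters, whereas the $L^2$ norm is of unit order.

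The main obstacle, and the novelty of the stochastic version, lies in handling the quadratic nonlinearity in expectation: terms of the form $w^{(q+1)}\otimes w^{(q+1)}$ or $w^{(q+1)}\otimes v^{(q)}_\ell$ in $\mathring{R}^{(q+1)}$ produce $L^1(\Omega)$ norms that by H\"older require $L^{2m}(\Omega)$ bounds on $v^{(q)}_\ell$ and on the amplitudes, and in principle these grow at every step. The key structural observation, highlighted in the introduction, is that once step $q$ is completed, all the required $L^m(\Omega;L^\infty_sL^\infty)$-moments of $v^{(q)}_\ell$, $\mathring{R}^{(q)}_\ell$ and $z$ are fixed finite numbers; consequently, at step $q{+}1$ one is free to choose $\lambda_{q+1}$, $\mu_{q+1}$ and the concentrations so large (and the amplitude cutoff so mild) that the $\mathcal{R}$-gain and intermittency factors dominate all these pre-existing constants and deliver
\begin{equation*}
\|\mathring{R}^{(q+1)}\|_{\bar L^1(\Omega;L^1_sL^1)}^{1/2}\leq\delta,\qquad \|w^{(q+1)}\|_{\bar L^\alpha(\Omega;Z^{p,r})}+\|w^{(q+1)}\|_{\mathbf E_p}\leq\delta.
\end{equation*}
Coordinating these parameter choices — so that the high-frequency gains at level $q{+}1$ beat all pre-existing $L^m(\Omega)$ constants while the resulting bounds on $v^{(q+1)}_\ell$ feed harmlessly into step $q{+}2$ — is the central technical challenge.
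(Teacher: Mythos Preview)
Your overall architecture---mollify, add a principal perturbation plus divergence and temporal correctors with a time cutoff, then choose the frequency parameters at level $q{+}1$ large enough to beat all the finite, $\lambda_{q+1}$-independent moments accumulated at level $q$---matches the paper, and you have correctly isolated the central stochastic point: once $(v^{(q)},\mathring R^{(q)})$ is fixed, every moment $\sup_s\mathbb E\|v^{(q)}\|_{N,s}^m$ and $\sup_s\mathbb E\|\mathring R^{(q)}\|_{C_sL^1}^m$ is a finite constant not depending on $\lambda_{q+1}$ (the paper proves this separately as Proposition~\ref{prop. Finiteness of All Moments of Supremum Norms}), so the parameters at step $q{+}1$ can be chosen afterwards to enforce \eqref{smallness of Rey.}--\eqref{weak smallness of pert.}.

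Two technical points need correction. First, the paper does \emph{not} use intermittent jets $W_k(\lambda x,\mu t)$ with concentrations $(r_\perp,r_\parallel)$; it uses \emph{stationary} Mikado flows $\mathbb W_k(\sigma_{q+1}x)$ together with a \emph{separate} temporal concentration profile $g_\kappa(\varsigma_{q+1}t)$, following Cheskidov--Luo. This is not cosmetic: BCV jets are a three-dimensional construction, whereas the statement is for all $d\ge2$; more importantly, the sharp $L^p_tL^\infty$ smallness for $p<2$ comes entirely from $\|g_\kappa\|_{L^p}\sim\kappa^{1/2-1/p}$ overcoming the spatial $L^\infty$ blow-up $\mu^{(d-1)/2}$, which requires decoupling spatial and temporal intermittency. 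With moving jets the two scalings are tied together and one does not reach the endpoint $p<2$.

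Second, the $2^{-q}$ in \eqref{L2 smallness of pert.} is not a mollification error (that error is $C_q\ell_{q+1}$ and is absorbed into $\delta$); it comes from the floor in the amplitude cutoff $\chi_{q+1}$: one needs $\varrho_{q+1}=4\chi_{q+1}(\mathring R^{(q)}_\ell)\ge 4\cdot 4^{-(q+1)}$ to make $\varrho_{q+1}^{1/2}$ smooth, and this lower bound yields $\|\varrho_{q+1}\|_{L^1}^{1/2}\lesssim\|\mathring R^{(q)}_\ell\|_{L^1}^{1/2}+2^{-q}$. Finally, the paper also mollifies $z$ (one-sided in time, to keep adaptedness), and a separate argument (Proposition~\ref{prop. uniform convergence of mollified z}) establishes $\|z-z_\ell\|_{\bar L^2(\Omega,L^2_sL^2)}\to0$ uniformly in $s$; this is what controls the two commutator stresses $\mathring R^\ell_{com}$ and $\mathring R^{(q+1)}_{com}$ that replace your single ``compatibility error''.
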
\
\par The proof of this result is presented in Section \ref{sec. Construction of the Iteration}\ and  \ref{sec. Inductive Estimates} below. Now we have all in hand to complete the proof of Theorem \ref{prop. closeness to initial vector field} and Corollary \ref{corrola. further regularities}.

\begin{proof}[Proof of Theorem \ref{prop. closeness to initial vector field}]
	Let $1\leq\alpha,r<\infty,1\leq p<2$, the initial data $u_0\in L^2_\sigma$, the smooth vector field $w\in\mathcal C^1_{0,\sigma}$ and $\varepsilon>0$ be arbitrarily given as stated in Theorem \ref{prop. closeness to initial vector field}. We start the iteration from (\ref{start point of the iteration}). We repeatedly apply Proposition \ref{prop. main iteration} with
	\begin{equation}\label{step-length of the iteration}
		\delta_{q+1}:={\rm min}\left\{\left\|\mathring{R}^{(q)}\right\|^{1/2}_{\bar L^1(\Omega,L_s^1L^1)},\frac{\varepsilon}{2^{q+1}}\right\},\quad q\in\mathbb{N}_0,
	\end{equation}
and obtain $(\mathcal{F}_t)_{t\geq0}$-adapted process $(v^{(q)}, \mathring{R}^{(q)})\in C^\infty_\sigma\times C_{[0,\infty)}L^1$ such that
	 for all $q\geq1$ that
	\begin{equation}
		\left\|\mathring{R}^{(q)}\right\|_{\bar L^1(\Omega,L_s^1L^1)}\leq\delta_q^2~~,\label{rq}
	\end{equation}
	and for $\varpi_{q}=v^{(q)}-v^{(q-1)}$
	\begin{align}\label{smallness of perturbations in iteration}
		\big\|\varpi_{q}\big\|_{\bar L^{\alpha}\left(\Omega,Z^{p,r}\right)}+
		\big\|\varpi_{q}\big\|_{\mathbf E_p}\leq\delta_q~;
	\end{align}
	and
	\begin{equation}
		\big\|\varpi_{q}\big\|_{\bar L^2(\Omega,L_s^2L^2)}\lesssim\left\|\mathring{R}^{(q-1)}\right\|^{1/2}_{\bar L^1(\Omega,L_s^1L^1)}+2^{-(q-1)}~.\nonumber
	\end{equation}
	Here the implicit constant is deterministic and independent of $q$ and $\varepsilon$.
Moreover, by \eqref{rq} for all $q\geq2$
\begin{equation}\label{only for explanation 2}
		\big\|\varpi_{q}\big\|_{\bar L^2(\Omega,L_s^2L^2)}\lesssim\delta_{q-1}+2^{-(q-1)}~.
	\end{equation}
Hence, $\mathring{R}^{(q)}\rightarrow0$ and there exists some $v\in {\bar L^2\big(\Omega,L_s^2L^2\big)\bigcap}\bar L^\alpha\big(\Omega,Z^{p,r}\big)\bigcap\mathbf E_p$ such that
	\begin{align}
				v^{(q)}
		&\longrightarrow v\ \ \ in\ \ \ \bar L^2\big(\Omega,L_s^2L^2\big)\bigcap\bar L^\alpha\big(\Omega,Z^{p,r}\big)\bigcap\mathbf E_p~.\nonumber
	\end{align}
Taking the limit as $q\rightarrow \infty$,
one sees that $v$ satisfies the system (\ref{random NS}). Thus the process $u:=v+z$ is a solution to (\ref{sto. NS}) with initial data $u_0$ in the sense of Definition \ref{def. weak solu.}.
\par Finally, by (\ref{step-length of the iteration}) and (\ref{smallness of perturbations in iteration}), we have the  closeness in $L^{\alpha}\Big(\Omega;Z^{p,r}\Big)$ and $\mathbf E_p$-norms :
\begin{align}
	\big\|u-(w+z)\big\|_{\bar L^{\alpha}\left(\Omega,Z^{p,r}\right)}+\big\|u-(w+z)\big\|_{\mathbf E_p}
	&=\big\|v-w\big\|_{\bar L^{\alpha}\left(\Omega,Z^{p,r}\right)}+\big\|v-w\big\|_{\mathbf E_p}\nonumber\\
	&\leq\sum_{q=1}^{\infty}\Big(\big\|\varpi_{q}\big\|_{\bar L^{\alpha}\left(\Omega,Z^{p,r}\right)}+\big\|\varpi_{q}\big\|_{\mathbf E_p}\Big)\nonumber\\
	&\leq\sum_{q=1}^{\infty}\frac{\varepsilon}{2^{q}}<\varepsilon~.\nonumber
\end{align}
Thus the proof is complete.
\end{proof}\

\begin{proof}[Proof of Corollary \ref{corrola. further regularities}]
{First we choose a sequence $\lbrace w_n; n\in\mathbb N\rbrace\subset\mathcal C^{\infty}_{0,\sigma}$ such that
\begin{align}\label{w_n choice (ii)}
		\sup_{s\geq0}\big\|w_i-w_j\big\|_{L^p_sL^\infty}\geq1~,\text{~~for~each~pair~}i\neq j~.
\end{align}
Applying Theorem \ref{prop. closeness to initial vector field} to $u_0$, $\varepsilon=1/3$ and each $w_n$ respectively then yields a sequence $\lbrace u_n\rbrace$ of solutions to (\ref{sto. NS}) such that $u_n-z\in\bar L^2\big(\Omega,L_s^2L^2\big)\bigcap\bar L^{\alpha}\big(\Omega;Z^{p,r}\big)\bigcap\mathbf E_p	$ and
\begin{align}
	\big\|u_n-(w_n+z)\big\|_{\bar L^\alpha\left(\Omega;Z^{p,r}\right)}+\big\|u_n-(w_n+z)\big\|_{\mathbf E_p}<1/3~.
\end{align}
Then any two of the solutions are different:
\begin{align}
	&~~~~~\big\|u_i-u_j\big\|_{\bar L^\alpha(\Omega,~L^p_sL^\infty)}\nonumber\\
	    &\geq\sup_{s\geq0}\big\|w_i-w_j\big\|_{L^p_sL^\infty}-\big\|u_i-(w_i+z)\big\|_{\bar L^\alpha(\Omega,~L^p_sL^\infty)}-\big\|u_j-(w_j+z)\big\|_{\bar L^\alpha(\Omega,~L^p_sL^\infty)}\nonumber\\
	    &\geq1-1/3-1/3=1/3~,~~~\forall\  i\neq j~.
\end{align}}

Now we show that $u_n\in\bar L^\alpha\big(\Omega,L_s^pL^\infty\big)$. It suffices to prove $z\in\bar L^\alpha\big(\Omega,L_s^pL^\infty\big)$. For this, we split $z$ into two parts by (\ref{z represeantation}). For the first part, by standard estimates of heat kernel, we have for $2\leq p_1<\infty$ that
\begin{align}
	\left\|e^{t(\Delta-I)}u_0\right\|_{L^{\infty}}\lesssim e^{-\frac{t}{2}}t^{-\frac{d}{2p_1}}\big\|u_0\big\|_{L^{p_1}}~,\ \ \ t\in(0,\infty)~.\nonumber
\end{align}
Hence, for $e^{t(\Delta-I)}u_0\in\bar L^\alpha\big(\Omega,L_s^pL^\infty\big)$, we only need $1-\frac{pd}{2p_1}>0$~, i.e. $p_1>\frac{pd}{2}$~. For the stochastic convolution part, note that ${\rm Tr}\big[G^*(I-\Delta)^\lambda G\big]<\infty$ implies $W_{con}\in \bar L^\alpha\big(\Omega,C_sL^\infty\big)\subset\bar L^\alpha\big(\Omega,L_s^pL^\infty\big)$ for all $1\leq\alpha<\infty$.
\end{proof}\

\section{Construction of the Iteration}\label{sec. Construction of the Iteration}\

\par This section is devoted to the construction of $v^{(q+1)}$ and  $\mathring R^{(q+1)}$. To this end, we employ a two-step approach. In the first step we do mollification to avoid a loss of derivative during convex integration scheme. In the second step we  proceeds along  the lines of \cite[Section 4]{CL22} and do space-time convex integration. To keep the initial condition for the mollification equation we introduce a time cutoff function for the perturbation which leads to an extra error (see \cite{BMS21,HZZ21a}).

\subsection{Mollification}
\noindent\par We intend to replace $(v_q,\mathring R^{(q)})$ by a mollified field $(v^{(q)}_\ell,\mathring{R}^{(q)}_\ell)$. To this end, let $\phi\in C^\infty_c\left(\mathbb R^d;\mathbb R_+\right)$ with $supp~\phi\subset B_1(0)$, and $\varphi\in C^\infty_c\left(\mathbb R;\mathbb R_+\right)$ with $supp~\varphi\subset[0,1]$. And we define the mollifiers as follows:
\begin{align}
	&\phi_{\ell_{q+1}}:=\ell_{q+1}^{-d}\phi\left(\cdot/\ell_{q+1}\right)~,\nonumber\\
	&\varphi_{\ell_{q+1}}:=\ell_{q+1}^{-1}\varphi\left(\cdot/\ell_{q+1}\right)~.\nonumber
\end{align}
The one sided mollifier here is used in order to preserve adaptedness. Here, $\ell_{q+1}\in(0,1)$ is a small parameter and will be set later. If there's no confusion, we will simply write $\ell$ for $\ell_{q+1}$. Now we extend $v^{(q)},z,p_q$ and $\mathring R^{(q)}$ to $t<0$ by taking them equal to the value at $t=0$. Then  $v^{(q)},z,p_q$ and $\mathring R^{(q)}$ also satisfies equation for $t<0$ as $\partial_t v^{(q)}=0$ from construction.
\par We define a mollification of $v^{(q)},\mathring{R}^{(q)},z$ in space and time by convolution as follows:
\begin{align}
	&~~~~z_\ell:=\left(z\ast_x\phi_\ell\right)\ast_t\varphi_\ell~,\nonumber\\
	&~v^{(q)}_\ell:=\left(v^{(q)}\ast_x\phi_\ell\right)\ast_t\varphi_\ell~,\nonumber\\
	&\mathring{R}^{(q)}_\ell:=\left(\mathring{R}^{(q)}\ast_x\phi_\ell\right)\ast_t\varphi_\ell~.\nonumber
\end{align}
Then a slight calculation shows that $\left(v^{(q)}_\ell, \mathring{R}^{(q)}_\ell\right)$ satisfies on $[0,\infty)\times\mathbb T^d$
\begin{equation}\label{mollified Reynold system}
	\left\{\begin{aligned}
		&\partial_tv^{(q)}_\ell+{\rm div}\left[\left(v^{(q)}_\ell+z_\ell\right)\otimes \left(v^{(q)}_\ell+z_\ell\right)\right]=-\nabla {\hat p}_\ell^{(q)}+\nu \Delta v^{(q)}_\ell+z_\ell+{\rm div}\left(\mathring{R}^{(q)}_\ell+\mathring{R}^\ell_{com}\right)~, \\
		&{\rm div}\ v^{(q)}_\ell\equiv 0~,\\
&v^{(q)}_\ell(0)=0~,
	\end{aligned}\right.
\end{equation}
with  $\mathring{R}^\ell_{com}$ and  ${\hat p}_\ell^{(q)}$ given by
\begin{equation}\label{definition of molli-communication Reynold stress}
	\mathring{R}^\ell_{com}:=\left(v^{(q)}_\ell+z_\ell\right)\mathring\otimes \left(v^{(q)}_\ell+z_\ell\right)-\left[\left((v^{(q)}+z)\mathring\otimes (v^{(q)}+z)\right)\ast_x\phi_\ell\right]\ast_t\varphi_\ell~,
\end{equation}
\begin{equation}
	{\hat p}_\ell^{(q)}:=p_\ell^{(q)}-\frac{1}{d}\left|v^{(q)}_\ell+z_\ell\right|^2+\frac{1}{d}\left(\left|v^{(q)}+z\right|^2 \ast_x\phi_\ell\right)\ast_t\varphi_\ell~.\nonumber
\end{equation}
It is easy to see that $z_\ell$ is $(\mathcal{F}_t)_{t\geq0}$-adapted and so are $v^{(q)}_\ell$ and $\mathring{R}^{(q)}_\ell.$

\subsection{Construction of the Main Perturbation $\omega_{q+1}$}
\indent\par Let us now proceed with the construction of the perturbation $\omega_{q+1}$ which then defines the next iteration by $v^{(q+1)}:=v^{(q)}_\ell+\omega_{q+1}$. To this end we employ the stationary Mikado flows introduced in \cite{DS17} and presented in \cite{CL22}, which we recall in Appendix \ref{Appendix A}. In particular, the building blocks $\mathbb W_k^{(q+1)}$ ($k\in\Lambda$) is given by \eqref{definition for W_k} with $\mu=\mu_{q+1}$, the spatial concentration parameter whose value will be given in Section \ref{subsec. setting of iteration parameters}.
Here, $\Lambda\subset\mathbb Z^d$ introduced in Lemma \ref{lem. Geometry} is a finite set. Now we introduce a spacial oscillation parameter $\sigma_{q+1}\in\mathbb N$ and $\mathbb W_k^{(q+1)}\big(\sigma_{q+1}x\big)$ is $\left(\sigma_{q+1}^{-1}\mathbb T\right)^d$-periodic.
\par Following \cite{CL22}, we then use a temporal smooth function to oscillate the building blocks intermittently in time. We choose a function $g\in C_c^\infty\big((0,1)\big)$ with $\|g\|_{L^2}=1$, and define
\begin{equation}\label{definition of g_kappa}
	g_{\kappa}(t):=\kappa_{q+1}^{1/2}\cdot g\big(\kappa_{q+1}t\big)~,~~t\in[0,1]~.
\end{equation}
Here, $\kappa_{q+1}>0$ is a large constant and will be specified later. Next, we extend the function $g_{\kappa}$ periodically onto {$[0,\infty)$} and still denote it by $g_{\kappa}$. One can show that for all $1\leq m\leq\infty$ and all $s\in[0,\infty)$~,
\begin{align}\label{L^alpha estimates for g_kappa}
	&\big\|g_{\kappa}\big\|_{L^m[s,s+1]}\lesssim\kappa_{q+1}^{1/2-1/m}~,\\
	\label{L^2 norm for g_kappa}
	&\big\|g_{\kappa}\big\|_{L^2[s,s+1]}=1~.
\end{align}
We also introduce a temporal oscillation parameter $\varsigma_{q+1}\in\mathbb N$  so that the rescaled function $g_{\kappa}\big(\varsigma_{q+1}\cdot\big)$ is $\varsigma_{q+1}^{-1}$-periodic.
\par The parameters $\mu_{q+1}$, $\sigma_{q+1}$, $\kappa_{q+1}$ and $\varsigma_{q+1}$ are assumed to be sufficiently large for the moment and will be set  in Section \ref{subsec. setting of iteration parameters}.


\par As the next step, we shall define certain amplitude functions used in the definition of the perturbation $\omega_{q+1}$. To this end, let $\chi_{q+1}\in C^\infty\left(\mathbb R^{d\times d};\mathbb R_+\right)$ be such that
\begin{align}\label{definition for the cut-off chi}
	&(i)~\chi_{q+1}\left(R\right) ~is~monotonically~increasing~with~respect~to~\left|R\right|~;\nonumber\\
	&(ii)~\chi_{q+1}\left(R\right)=\left\{\begin{aligned}
		&~~~4^{-(q+1)}~,~~~0\leq\left|R\right|\leq4^{-(q+1)}~;\\
		&~\left|R\right|~,~2\leq\left|R\right|<\infty~;
	\end{aligned}\right.\\
	&(iii)~\left|R\right|/2\leq\chi_{q+1}\left(R\right)\leq2\left|R\right|~,~when~4^{-(q+1)}<\left|R\right|<2~.\nonumber
\end{align}
It is easy to see that the function $\chi_{q+1}$ has bounded partial derivatives of all orders. Then we define  $\varrho_{q+1}$ by
\begin{equation}\label{definition for rescaling function varrho}
	\varrho_{q+1}:=4~\chi_{q+1}\left(\mathring{R}^{(q)}_\ell\right)~.
\end{equation}
It follows that $2|\mathring{R}^{(q)}_\ell|\leq \varrho_{q+1}$ and hence
 $\rm{Id}-\mathring{R}^{(q)}_\ell/\varrho_{q+1}\in B_{1/2}\left(\rm{Id}\right)$, which by Lemma \ref{lem. Geometry} and (\ref{normalized L^2-norm of Psi_k}), (\ref{definition for W_k}) implies that
\begin{align}\label{geometric decomposition of relative Reynold stress}
	\varrho_{q+1}\rm{Id}-\mathring{R}^{(q)}_\ell
	&=\sum_{k\in\Lambda}\varrho_{q+1}\Gamma_k^{2}\left(\rm{Id}-\frac{\mathring{R}^{(q)}_\ell}{\varrho_{q+1}}\right)\mathbf{e}_k\otimes\mathbf{e}_k~,\nonumber\\
	&=\sum_{k\in\Lambda}\varrho_{q+1}\Gamma_k^{2}\left(\rm{Id}-\frac{\mathring{R}^{(q)}_\ell}{\varrho_{q+1}}\right)\int_{\mathbb T^d} \mathbb W_k^{(q+1)}\big(x\big)\otimes\mathbb W_k^{(q+1)}\big(x\big){\rm d}x~\nonumber\\
    &=\sum_{k\in\Lambda}\varrho_{q+1}\Gamma_k^{2}\left(\rm{Id}-\frac{\mathring{R}^{(q)}_\ell}{\varrho_{q+1}}\right)\int_{\mathbb T^d} \mathbb W_k^{(q+1)}\big(\sigma_{q+1}x\big)\otimes\mathbb W_k^{(q+1)}\big(\sigma_{q+1}x\big){\rm d}x~.
\end{align}
Now we define the amplitude function
\begin{equation}\label{definition of a_k}
	a_k^{(q+1)}(t,x):=g_{\kappa}\left(\varsigma_{q+1}t\right)\varrho_{q+1}^{1/2}(t,x)\Gamma_k\left(\rm{Id}-\frac{\mathring{R}^{(q)}_\ell}{\varrho_{q+1}}\right)~,~~~~~k\in\Lambda~.
\end{equation}
Note that $a_k^{(q+1)}$ is $(\mathcal{F}_t)_{t\geq0}$-adapted.
With these preparation in hand, we define the principal perturbation as
\begin{equation}\label{definition of principal pertutbation}
	\omega_{q+1}^{(p)}(t,x):=\sum_{k\in\Lambda}a_k^{(q+1)}(t,x)\mathbb W_k^{(q+1)}\left(\sigma_{q+1}x\right)~.
\end{equation}

We also define the incompressibility corrector
\begin{equation}\label{definition of corrector perturbation}
	\omega_{q+1}^{(c)}(t,x):=\sigma_{q+1}^{-1}\sum_{k\in\Lambda}\nabla a_k^{(q+1)}(t,x):\mathbb V_k^{(q+1)}\left(\sigma_{q+1}x\right)~,
\end{equation}
where $\mathbb V_k^{(q+1)}$ is given in \eqref{definition for V_k} with $\mu=\mu_{q+1}$.
By \eqref{relation divV_k=W_k} and a direct computation we deduce that
\begin{equation}\label{div-representation of w^p+w^c}
	\omega_{q+1}^{(p)}+\omega_{q+1}^{(c)}=\sigma_{q+1}^{-1}{\rm div} \sum_{k\in\Lambda}a_k^{(q+1)}(t,x)\mathbb V_k^{(q+1)}\left(\sigma_{q+1}x\right)~,
\end{equation}
and hence $\textrm{div}(\omega_{q+1}^{(p)}+\omega_{q+1}^{(c)})=0$  since $a_k^{(q+1)}(t,x)\mathbb V_k^{(q+1)}$ is skew-symmetric.

Next we introduce the temporal corrector. To this end, we define the function
\begin{equation}\label{definition of h_kappa}
	h_{\kappa}(t):=\int_0^t\Big(g_{\kappa}(s)^2-1\Big)ds~,~~~~~{t\in\mathbb R}~.
\end{equation}
Then by properties of the function $g_{\kappa}$ one can easily see that $h_{\kappa}$ is periodic with  period $1$ and smooth on $[0,\infty)$ and obeys the bound
\begin{align}\label{uniform bounds for h_kappa}
	\vert h_{\kappa}(t)\vert\leq 1~~~,~~{t\in[0,\infty)}~.
\end{align}
Then the divergence-free temporal corrector is defined by
\begin{equation}\label{definition of temporal perturbation}
	\omega_{q+1}^{(t)}(t,x):=\varsigma_{q+1}^{-1}\cdot h_{\kappa}(\varsigma_{q+1}t)\mathbf P{\rm div}\mathring{R}^{(q)}_\ell(t,x)~.
\end{equation}
Here, $\mathbf P=\rm{Id}-\nabla\Delta^{-1}{\rm div}$~ is the Helmholtz projection.

To keep the initial condition, we introduce a  smooth cut-off $\Theta_{q+1}\in C^\infty\big([0,\infty);[0,1]\big)$ such that
{
\begin{align}\label{def. cutoff Theta}
	\Theta_{q+1}(t)=\left\{\begin{aligned}
		&0 \quad \textrm{when}~t\leq\ell_{q+1}^{1/2}/2~,\\
		&1 \quad \textrm{when}~t\geq\ell_{q+1}^{1/2}~,
	\end{aligned}\right.
	\ \ \ \ \ \ \rm{and}\ \ \ \ \ \ \big\|\Theta^{(n)}_{q+1}\big\|_0\lesssim\ell_{q+1}^{-n/2}~.
\end{align}}
And we define
\begin{align}
	\widetilde\omega_{q+1}^{(p)}:=\Theta_{q+1}\omega_{q+1}^{(p)}~,\quad
	\widetilde\omega_{q+1}^{(c)}:=\Theta_{q+1}\omega_{q+1}^{(c)}~,\quad
	\widetilde\omega_{q+1}^{(t)}:=\Theta_{q+1}^2\omega_{q+1}^{(t)}~.\nonumber
\end{align}
Finally, the total perturbation $\omega_{q+1}$ is defined by
\begin{align}
	\omega_{q+1}={\widetilde\omega_{q+1}^{(p)}+\widetilde\omega_{q+1}^{(c)}+\widetilde\omega_{q+1}^{(t)}}~,\nonumber
\end{align}
which is mean zero, divergence free and $(\mathcal{F}_t)_{t\geq0}$-adapted. The new velocity $v^{(q+1)}$ is defined as
\begin{equation}\label{construction of v^(q+1)}
	v^{(q+1)}:=v^{(q)}_\ell+\omega_{q+1}=v^{(q)}+\varpi_{q+1}
\end{equation}
with $\varpi_{q+1}:=v^{(q+1)}-v^{(q)}=\left(v^{(q)}_\ell-v^{(q)}\right)+\omega_{q+1}$.

\subsection{The new Reynold Stress $\mathring R^{(q+1)}$}
\indent\par In this subsection we give the  new Reynold Stress $\mathring R^{(q+1)}$.
First, according to (\ref{stationary Euler equation for W_k}), (\ref{geometric decomposition of relative Reynold stress}) and (\ref{definition of principal pertutbation}), it follows that

\begin{align}\label{relation: balance of w^p and Reynold stress}
	&~{\rm div}\left(\widetilde\omega_{q+1}^{(p)}\otimes\widetilde\omega_{q+1}^{(p)}\right)+{\rm div}\mathring{R}^{(q)}_\ell\nonumber\\
	&=\Theta_{q+1}^2\cdot\left[{\rm div}\left(\omega_{q+1}^{(p)}\otimes\omega_{q+1}^{(p)}\right)+{\rm div}\mathring{R}^{(q)}_\ell\right]+\Big(1-\Theta_{q+1}^2\Big){\rm div}\mathring{R}^{(q)}_\ell\nonumber\\
	&=\Theta_{q+1}^2\cdot\left[{\rm div}\sum_{k\in\Lambda}\left(a_k^{(q+1)}\right)^2\mathbb W_k^{(q+1)}\big(\sigma_{q+1}\cdot\big)\mathring\otimes\mathbb W_k^{(q+1)}\big(\sigma_{q+1}\cdot\big)+{\rm div}\mathring{R}^{(q)}_\ell\right]\nonumber\\
	&\ \ \ \ \ \ +\Big(1-\Theta_{q+1}^2\Big){\rm div}\mathring{R}^{(q)}_\ell+{\rm div}\mathring R_{far}^{(q+1)}+\nabla\left(\frac1d\Theta_{q+1}^2\left|\omega_{q+1}^{(p)}\right|^2\right)\nonumber\\
	&=\Theta_{q+1}^2{\rm div}\sum_{k\in\Lambda}\left(a_k^{(q+1)}\right)^2\left(\mathbb W_k^{(q+1)}\big(\sigma_{q+1}\cdot\big)\mathring\otimes\mathbb W_k^{(q+1)}\big(\sigma_{q+1}\cdot\big)-\int_{\mathbb T^d}\mathbb W_k^{(q+1)}\big(x\big)\otimes\mathbb W_k^{(q+1)}\big(x\big){\rm d}x\right)\nonumber\\
	&\ \ \ \ \ \ +\Theta_{q+1}^2\cdot\Big(1-g_{\kappa}^2\left(\varsigma_{q+1}t\right)\Big){\rm div}\mathring{R}^{(q)}_\ell+\Big(1-\Theta_{q+1}^2\Big){\rm div}\mathring{R}^{(q)}_\ell+{\rm div}\mathring R_{far}^{(q+1)}\nonumber\\
	&\ \ \ \ \ \ +\nabla\left[\Theta_{q+1}^2\left(\frac1d\left|\omega_{q+1}^{(p)}\right|^2+g_{\kappa}\left(\varsigma_{q+1}t\right)^2\cdot\varrho_{q+1}\right)\right]\nonumber\\
	&=\nabla p^{(p)}_{q+1}+\Theta_{q+1}^2\cdot\Big(1-g_{\kappa}^2\left(\varsigma_{q+1}t\right)\Big){\rm div}\mathring{R}^{(q)}_\ell+\Big(1-\Theta_{q+1}^2\Big){\rm div}\mathring{R}^{(q)}_\ell+{\rm div}\left(\mathring R_{far}^{(q+1)}+\mathring R_{osc, x}^{(q+1)}\right)~,
\end{align}
where
\begin{align}
	p^{(p)}_{q+1}
	&:={\Theta_{q+1}^2\left[g_{\kappa}\left(\varsigma_{q+1}t\right)^2\varrho_{q+1}+\frac1d\left|\omega_{q+1}^{(p)}\right|^2-\frac1d\sum_{k\in\Lambda}\left|a_k^{(q+1)}\right|^2\int_{\mathbb T^d}\left|\mathbb W_k^{(q+1)}\right|^2dx\right]}~;\nonumber\\
	\label{definition of R_far}
	\mathring R_{far}^{(q+1)}
	&:={\Theta_{q+1}^2}\sum_{k\neq k'}a_k^{(q+1)}a_{k'}^{(q+1)}\mathbb W_k^{(q+1)}\left(\sigma_{q+1}\cdot\right)\mathring\otimes \mathbb W_{k'}^{(q+1)}\left(\sigma_{q+1}\cdot\right)~;\\
	\label{definition of R_osc,x}
	\mathring R_{osc, x}^{(q+1)}
	&:={\Theta_{q+1}^2}\sum_{k\in\Lambda}\mathcal B\left(\nabla\left(a_k^{(q+1)}\right)^2,\mathbb W_k^{(q+1)}\left(\sigma_{q+1}\cdot\right)\mathring\otimes\mathbb W_k^{(q+1)}\left(\sigma_{q+1}\cdot\right)-\int_{\mathbb T^d}\mathbb W_k^{(q+1)}\big(x\big)\mathring\otimes\mathbb W_k^{(q+1)}\big(x\big){\rm d}x\right).
	\end{align}
Here $\mathcal B$ denotes the bilinear anti-divergence operator defined in Appendix \ref{Appendix B}.\

Moreover, using (\ref{relation: balance of w^p and Reynold stress}), (\ref{definition of h_kappa}), (\ref{definition of temporal perturbation}), one can see that
\begin{align}\label{relation: balance of w^p, w^t and Reynold stress}
	&~{\partial_t\widetilde\omega_{q+1}^{(t)}+{\rm div}\left(\widetilde\omega_{q+1}^{(p)}\otimes\widetilde\omega_{q+1}^{(p)}\right)}+{\rm div}\mathring{R}^{(q)}_\ell\nonumber\\
	&=\nabla p^{(p,t)}_{q+1}+{\rm div}\left(\mathring R_{far}^{(q+1)}+\mathring R_{osc, x}^{(q+1)}+\mathring R_{osc,t}^{(q+1)}\right){+\Big(1-\Theta_{q+1}^2\Big){\rm div}\mathring{R}^{(q)}_\ell+2\Theta_{q+1}\Theta_{q+1}'\omega_{q+1}^{(t)}}~,
\end{align}
with
\begin{align}
	p^{(p,t)}_{q+1}
	&:=p^{(p)}_{q+1}-\varsigma_{q+1}^{-1}\Delta^{-1}{\rm div}{\rm div}\partial_t\left({\Theta_{q+1}^2}h_{\kappa}(\varsigma_{q+1}t)\mathring{R}^{(q)}_\ell\right)~,\nonumber\\
	\label{definition for R_osc,t}
	\mathring R_{osc,t}^{(q+1)}
	&:=\varsigma_{q+1}^{-1}\cdot{\Theta_{q+1}^2}h_{\kappa}(\varsigma_{q+1}t)\partial_t\mathring{R}^{(q)}_\ell~.
\end{align}\

Then by (\ref{mollified Reynold system}) and (\ref{relation: balance of w^p, w^t and Reynold stress}), we have that $v^{(q+1)}$ solves the random Reynold system
\begin{equation}
	\left\{\begin{aligned}
		&\partial_t v^{(q+1)}+{\rm div}\left[\left(v^{(q+1)}+z\right)\otimes \left(v^{(q+1)}+z\right)\right]=-\nabla p_{q+1}+\nu\Delta v^{(q+1)}+z+{\rm div}\mathring{R}^{(q+1)}\\
		&{\rm div}~v^{(q+1)}\equiv 0\\
		&v^{(q+1)}(0)=0
	\end{aligned}\right.\nonumber
\end{equation}
on $[0,\infty)\times\mathbb T^d$, with some gradient pressure $p_{q+1}$ and the new Reynold stress
\begin{equation} \label{construction for R^(q+1)}
	\mathring{R}^{(q+1)}:=\mathring{R}_{com}^\ell+\mathring R_{com}^{(q+1)}+\mathring R_{far}^{(q+1)}+\mathring R_{osc, x}^{(q+1)}+\mathring R_{osc,t}^{(q+1)}+\mathring R_{lin}^{(q+1)}+\mathring R_{cor}^{(q+1)}{+\mathring R_{cut}^{(q+1)}}
\end{equation}
given  by (\ref{definition of molli-communication Reynold stress}), (\ref{definition of R_far}), (\ref{definition of R_osc,x}), (\ref{definition for R_osc,t}), and
\begin{align} \label{definition of R_lin}
	&\mathring R_{lin}^{(q+1)}:=\mathcal{R}\left[\Theta_{q+1}\partial_t\left(\omega_{q+1}^{(p)}+\omega_{q+1}^{(c)}\right)-\nu\Delta\omega_{q+1}+(z_\ell-z)\right]+\left(v^{(q)}_\ell+z_\ell\right)\mathring\otimes\omega_{q+1}+\omega_{q+1}\mathring\otimes\left(v^{(q)}_\ell+z_\ell\right),\\
	\label{definition of R_cor}
	&\mathring R_{cor}^{(q+1)}:=\left({\widetilde\omega_{q+1}^{(c)}+\widetilde\omega_{q+1}^{(t)}}\right)\mathring\otimes~\omega_{q+1}+\widetilde\omega_{q+1}^{(p)}\mathring\otimes\left({\widetilde\omega_{q+1}^{(c)}+\widetilde\omega_{q+1}^{(t)}}\right),\\
	\label{definition of R_com}
	&\mathring R_{com}^{(q+1)}:=\left(v^{(q+1)}+z_\ell\right)\mathring\otimes\left(z-z_\ell\right)+\left(z-z_\ell\right)\mathring\otimes\left(v^{(q+1)}+z_\ell\right)+\left(z-z_\ell\right)\mathring\otimes\left(z-z_\ell\right)~,\\
	\label{definition of R_cut}
	&{\mathring R_{cut}^{(q+1)}:=\Big(1-\Theta_{q+1}^2\Big)\mathring{R}^{(q)}_\ell+\Theta_{q+1}'\mathcal{R}\Big(\omega_{q+1}^{(p)}+\omega_{q+1}^{(c)}+2\Theta_{q+1}\omega_{q+1}^{(t)}\Big)}~.
\end{align}
Here $\mathcal{R}$ is the anti-divergence we recall in Appendix \ref{Appendix B}.

\section{Proof of Proposition \ref{prop. main iteration}} \label{sec. Inductive Estimates}\
\par This section is devoted to the proof for the Main Iteration Proposition \ref{prop. main iteration}. In the following $\alpha,r,p$ are fixed and given in the statement of Proposition \ref{prop. main iteration}. First of all, we start the proof by fixing the parameters in Section \ref{subsec. setting of iteration parameters}. In Section \ref{subsec. Moments Estimates} we prove two propositions. One gives the finite moment bound for the approximate velocity and the error at step $q$ is independent of parameters at step $q+1$, which is the key point for the whole proof. The other shows the mollification convergence which is required for various subsequent estimates. Section \ref{subsec. iterated estimates} is the main part of the proof and contains inductive estimates of approximate velocity and error.

\subsection{Choice of Parameters }\label{subsec. setting of iteration parameters}
\indent\par In the sequel the mollification, concentration and oscillation parameters : $\ell_{q+1}$, $\mu_{q+1}$, $\sigma_{q+1}$, $\kappa_{q+1}$ and $\varsigma_{q+1}$ have to be carefully chosen in order to respect all the conditions appearing in the estimates below. To this end, we  first choose an universal and sufficiently small constant $0<\vartheta<\frac{1}{2d+9}$ such that
$$(d+3)\vartheta\leq \min\left\{2\left(\frac1p-\frac12\right),\frac{d-1}{4r}\right\},$$
which leads to
\begin{equation}
	\left\{\begin{aligned} \label{setting of vartheta}
		&\frac{1}{2\vartheta}\geq(4d+7)\vartheta+\frac{d-1}{2}~,\\
		&\left(\frac{1}{p}-\frac{1}{2}\right)\Big(1+d-(5d+17)\vartheta\Big)\geq\dfrac{d+3}{2}\vartheta ~,\\
		&\left(\frac{1}{p}-\frac{1}{2}\right)\frac{1}{\vartheta}\geq\frac{d-1}{2} ~,\\
		&\frac{d-1}{r}\geq(4d+12)\vartheta~.\\
	\end{aligned}\right.
\end{equation}\\
Now we choose the parameters using $\vartheta$ as follows:\\
\begin{equation}\label{setting of iteration parameters}
\begin{aligned}
	Mollification&:~~~\ell_{q+1}=\lambda_{q+1}^{-\vartheta}~;\\
	Spacial~Concentration&:~~~\mu_{q+1}=\lambda_{q+1}~;\\
	Spacial~Oscillation&:~~~\sigma_{q+1}=\left\lceil\lambda_{q+1}^{\frac{1}{2\vartheta}}\right\rceil~;\\
	Temporal~Concentration&:~~~\kappa_{q+1}=\lambda_{q+1}^{1+d-(5d+17)\vartheta+\frac{1}{\vartheta}}~;\\
	Temporal~Oscillation&:~~~\varsigma_{q+1}=\left\lceil\lambda_{q+1}^{(d+6)\vartheta}\right\rceil~;
\end{aligned}
\end{equation}\\
where "$\lceil x\rceil$" means the smallest integer larger than $x\in\mathbb R$~. As one will see in the final subsection, the final control of $\varpi_{q+1}$ and $\mathring{R}^{(q+1)}$~ is small by choosing $\lambda_{q+1}$ large enough.

\subsection{Preparations}\label{subsec. Moments Estimates}
\indent\par In this subsection we first show that the moment estimates of $v^{(q)}$ and $\mathring{R}^{(q)}$ is finite and independent of $\lambda_{q+1}$, i.e. parameters at level $q+1$. Since the equation has quadratic nonlinearity, the estimates of moments at step $q+1$ contain higher moments of step $q$. By using the following proposition the higher moments only depends on parameters up to step $q$ and we could choose parameters at step $q+1$ to guarantee smallness in the proof.

\begin{proposition}\label{prop. Finiteness of All Moments of Supremum Norms}
	{\bf (Finiteness of All Moments)} For each $q\in\mathbb N_0$ and  any $0<m<\infty$ and $N\in\mathbb N_0$, there exists a finite constant $C_{m,N,\lambda_1,...,\lambda_q}>0$ independent of $\lambda_{q+1}$ such that
	\begin{equation} \label{finiteness of all moments of v^q and R^q}
		\sup_{s\geq0}\mathbb E\left\|v^{(q)}\right\|_{N,s}^m+\sup_{s\geq0}\mathbb E\left\|\mathring{R}^{(q)}\right\|_{C_{s}L^1}^m\leq C_{m,N,\lambda_1,...,\lambda_q}~.
	\end{equation}
\end{proposition}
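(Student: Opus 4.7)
The plan is a strong induction on $q$, exploiting that $v^{(q)}$ and $\mathring R^{(q)}$ are built from the previous iterates via the explicit formulas of Section \ref{sec. Construction of the Iteration} using only the finite parameters $\lambda_1,\ldots,\lambda_q$, together with the stochastic input $z$ whose moments are uniformly bounded in $s$ by Theorem \ref{Thm. z regularity}. For the base case $q=0$, the velocity $v^{(0)}=w$ is smooth with finite $\sup_s\|w\|_{N,s}$ by the assumption on $w\in\mathcal C^1_{0,\sigma}$, while $\mathring R^{(0)}=\mathcal R(\partial_t w-\Delta w-z)+(w+z)\mathring\otimes(w+z)$ is handled in $\bar L^m(\Omega,C_sL^1)$ by combining the $L^p$-boundedness of $\mathcal R$ recalled in Appendix \ref{Appendix B}, the uniform moment bound on $z$, and Hölder's inequality in the probability space applied to the quadratic term.

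For the inductive step I would decompose $v^{(q+1)}=v^{(q)}_\ell+\omega_{q+1}$ and treat the two summands separately. The mollified part $v^{(q)}_\ell$ satisfies $\|v^{(q)}_\ell\|_{N,s}\leq\|v^{(q)}\|_{N,[s-\ell_{q+1},s+1]}$ since $\phi_{\ell_{q+1}},\varphi_{\ell_{q+1}}$ are $L^1$-normalised mollifiers, so the induction hypothesis immediately supplies every moment uniformly in $s$. For the perturbation $\omega_{q+1}=\widetilde\omega_{q+1}^{(p)}+\widetilde\omega_{q+1}^{(c)}+\widetilde\omega_{q+1}^{(t)}$, the deterministic factors $\mathbb W_k^{(q+1)},\mathbb V_k^{(q+1)}$ and the profiles $g_\kappa,h_\kappa,\Theta_{q+1}$ are smooth with explicit finite $C^N$-bounds (depending on $\lambda_{q+1}$, which the statement permits). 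The only stochastic input enters through the amplitudes $a_k^{(q+1)}$, which are smooth bounded functions of $\mathring R^{(q)}_\ell$ because the argument of $\Gamma_k$ lies in $B_{1/2}(\mathrm{Id})$ by construction of $\varrho_{q+1}$; their $C^N$-norms are controlled, via the chain rule and the uniform smoothness of $\chi_{q+1}$ and $\Gamma_k$, by polynomials in $\|\mathring R^{(q)}_\ell\|_{C^N_{s,x}}\lesssim\ell_{q+1}^{-N-d}\|\mathring R^{(q)}\|_{C_sL^1}$. Taking expectations and invoking the induction hypothesis then furnishes every moment of $\|\omega_{q+1}\|_{N,s}$ uniformly in $s$.

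For $\mathring R^{(q+1)}$ I would use its decomposition \eqref{construction for R^(q+1)} into eight pieces and bound each in $C_sL^1$. The commutator terms $\mathring R^\ell_{com}$ and $\mathring R_{com}^{(q+1)}$ are routine mollification/quadratic estimates combined with the bound just proved on $v^{(q+1)}$ and the moment bounds on $z,z_\ell$. The ``far field'', ``spatial'' and ``temporal'' oscillation terms, together with the cut-off error, reduce after applying $\mathcal R$ or $\mathcal B$ (Appendix \ref{Appendix B}) to polynomial expressions in the quantities already controlled. The linear and corrector errors $\mathring R_{lin}^{(q+1)}$, $\mathring R_{cor}^{(q+1)}$ involve $\partial_t\omega_{q+1}^{(p)}$, $\Delta\omega_{q+1}$ and products with $v^{(q)}_\ell+z_\ell$; because the nonlinearity is only quadratic, Hölder in the probability space lets me bound the $m$-th moment of each product by the $2m$-th moments of its factors, all finite by the previous step of the induction and Theorem \ref{Thm. z regularity}.

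The main obstacle I expect is the Faà di Bruno bookkeeping for $\|a_k^{(q+1)}\|_{C^N_{s,x}}$: the amplitude is a composition of $\Gamma_k$ and $\varrho_{q+1}^{1/2}=2\chi_{q+1}^{1/2}(\mathring R^{(q)}_\ell)$ evaluated at the tensor field $\mathring R^{(q)}_\ell$, and one must verify that because $\chi_{q+1}$ stays bounded below by $4^{-(q+1)}$, all such compositions remain smooth with bounds depending polynomially on $\|\mathring R^{(q)}_\ell\|_{C^N_{s,x}}$. Once this purely deterministic bookkeeping is done, the induction closes with a constant depending on $m,N$ and $\lambda_1,\ldots,\lambda_{q+1}$ but manifestly independent of $\lambda_{q+2}$, as required.
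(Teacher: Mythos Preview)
The proposal is correct and follows essentially the same approach as the paper's proof: induction on $q$, with the key technical ingredient being the $C^N$-control of the amplitudes $a_k^{(q+1)}$ via chain-rule/Fa\`a di Bruno estimates (handled in the paper as Lemma \ref{lem. subtle estimates for a_k}, using Appendix \ref{Appendix C}), followed by routine bounds on each piece of $v^{(q+1)}$ and of the decomposition \eqref{construction for R^(q+1)} of $\mathring R^{(q+1)}$. Your identification of the amplitude bookkeeping as the main obstacle, and the observation that the lower bound $\chi_{q+1}\geq 4^{-(q+1)}$ is what makes the compositions with $\varrho_{q+1}^{1/2}$ and $\varrho_{q+1}^{-1}$ smooth, matches exactly what the paper singles out.
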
\

\par To prove Proposition \ref{prop. Finiteness of All Moments of Supremum Norms}, we need the following $C^N$-estimates for the coefficients $a_k^{(q+1)}$ ($k\in\Lambda$), which will be used also in section \ref{subsec. iterated estimates}.\

\begin{lemma}\label{lem. subtle estimates for a_k}
	For each $m,N\in\mathbb N_0$ and $k\in\Lambda$, there exists a sequence $\left\lbrace C_{N}\right\rbrace$ of deterministic constants such that for all $s\geq0$ and $t\in[s,s+1]$ we have
\begin{align}\label{subtle estimate for partial a_k}
	\left\|\partial_t^m a_k^{(q+1)}(t)\right\|_{N}\lesssim4^{(m+N)q}\sum_{j=0}^{m}\varsigma_{q+1}^{~j}\big|g^{(j)}_{\kappa}\left(\varsigma_{q+1}t\right)\big|\ell_{q+1}^{-(N+m-j)-\left(N+m+\frac{1}{2}\right)(d+1)}\left(1+\left\|\mathring{R}^{(q)}\right\|_{C_{[s-1,s+1]}L^1}^{N+m+3/2}\right),
\end{align}
Here the implicit constant is deterministic and independent of $\lambda_{q+1}$ .
\end{lemma}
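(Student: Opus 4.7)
The plan is to differentiate the three factors in
\[
a_k^{(q+1)}(t,x)=g_\kappa(\varsigma_{q+1}t)\,\varrho_{q+1}^{1/2}(t,x)\,\Gamma_k\!\bigl(\mathrm{Id}-\mathring{R}^{(q)}_\ell/\varrho_{q+1}\bigr)
\]
separately, using Leibniz in time for the $g_\kappa$-factor and Fa\`a di Bruno for the composition encoding the remaining two factors. Setting $H_{q+1}(R):=2\chi_{q+1}^{1/2}(R)\,\Gamma_k(\mathrm{Id}-R/(4\chi_{q+1}(R)))$, the identity $a_k^{(q+1)}=g_\kappa(\varsigma_{q+1}t)\,H_{q+1}(\mathring R^{(q)}_\ell)$ combined with Leibniz in $t$ and the chain rule on the outer argument $\varsigma_{q+1}t$ gives
\[
\partial_t^m a_k^{(q+1)}=\sum_{j=0}^{m}\binom{m}{j}\varsigma_{q+1}^{j}\,g^{(j)}_\kappa(\varsigma_{q+1}t)\,\partial_t^{m-j}H_{q+1}(\mathring R^{(q)}_\ell),
\]
reducing the task to bounding $\|\partial_t^{m-j}H_{q+1}(\mathring R^{(q)}_\ell)(t)\|_N$.

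Next I would apply Fa\`a di Bruno to the composition $H_{q+1}\circ\mathring R^{(q)}_\ell$: each mixed derivative $\partial_t^{m-j}\partial_x^\alpha H_{q+1}(\mathring R^{(q)}_\ell)$ with $|\alpha|\leq N$ expands into a finite sum of terms of the form $D^r H_{q+1}(\mathring R^{(q)}_\ell)\prod_{i=1}^{r}\partial^{\beta_i}\mathring R^{(q)}_\ell$, with $\sum_i|\beta_i|=N+m-j$ and $1\leq r\leq N+m-j$. Two uniform estimates then suffice. On the one hand, the lower bound $\chi_{q+1}\geq 4^{-(q+1)}$ holds globally and the derivatives of $\chi_{q+1}$ of any order are bounded uniformly in $q$ (since $\chi_{q+1}$ agrees with the constant $4^{-(q+1)}$ or with $|R|$ outside the bounded transition region), so every differentiation of $H_{q+1}$ can only hit an inverse power of $\chi_{q+1}$, costing at most a factor $4^{q+1}$; this yields
\[
\|D^r H_{q+1}(R)\|\lesssim 4^{r(q+1)}(1+|R|)^{3/2},
\]
with constants depending only on $r$, $\Gamma_k$ and the reference cutoff $\chi$. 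On the other hand, the convolution structure $\mathring R^{(q)}_\ell=(\mathring R^{(q)}\ast_x\phi_{\ell_{q+1}})\ast_t\varphi_{\ell_{q+1}}$, together with $\|\partial^\alpha\phi_\ell\|_{L^\infty}\lesssim\ell_{q+1}^{-d-|\alpha|}$, $\|\partial^k\varphi_\ell\|_{L^1}\lesssim\ell_{q+1}^{-k}$, and the extension of $\mathring R^{(q)}$ by its value at $t=0$ for $t<0$, gives
\[
\|\partial^{\beta_i}\mathring R^{(q)}_\ell(t)\|_{L^\infty(\mathbb T^d)}\lesssim\ell_{q+1}^{-|\beta_i|-d-1}\|\mathring R^{(q)}\|_{C_{[s-1,s+1]}L^1}
\]
for $t\in[s,s+1]$ and $\ell_{q+1}\leq 1$; in particular $|\mathring R^{(q)}_\ell|\lesssim\ell_{q+1}^{-(d+1)}\|\mathring R^{(q)}\|_{C_{[s-1,s+1]}L^1}$.

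Combining these two estimates, each Fa\`a di Bruno term with $r$ blocks is bounded by
\[
4^{r(q+1)}(1+|\mathring R^{(q)}_\ell|)^{3/2}\,\ell_{q+1}^{-(N+m-j)-r(d+1)}\|\mathring R^{(q)}\|_{C_{[s-1,s+1]}L^1}^{r},
\]
and substituting the pointwise bound for $|\mathring R^{(q)}_\ell|$ while using $r\leq N+m$ upgrades this to the claimed exponent $-(N+m-j)-(N+m+\tfrac{1}{2})(d+1)$ on $\ell_{q+1}$, the prefactor $4^{(N+m)q}$, and the power $(1+\|\mathring R^{(q)}\|_{C_{[s-1,s+1]}L^1})^{N+m+3/2}$. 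The finite number of Fa\`a di Bruno partitions produces a deterministic combinatorial constant, and multiplying by $\varsigma_{q+1}^j|g^{(j)}_\kappa(\varsigma_{q+1}t)|$ from the Leibniz step reassembles the full right-hand side of the lemma.

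The main technical obstacle is the first of the two uniform estimates: one must verify that every differentiation of $H_{q+1}$ really costs at most a factor $4^{q+1}$, in spite of the $q$-dependence of $\chi_{q+1}$ inside $H_{q+1}$. This rests on the elementary observation that the derivatives of $\chi_{q+1}$ of every order are uniformly bounded in $q$, so that all the $q$-growth in $D^r H_{q+1}$ enters solely through inverse powers of $\chi_{q+1}$, each bounded by $4^{q+1}$. Independence of the implicit constants from $\lambda_{q+1}$ is then immediate, since $\ell_{q+1}$ and $\varsigma_{q+1}$ appear only explicitly on the right-hand side, and $4^{q+1}$ is absorbed in $4^{(m+N)q}$ by paying a harmless $q$-dependent constant.
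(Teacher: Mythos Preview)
Your strategy coincides with the paper's: both separate the oscillatory factor $g_\kappa(\varsigma_{q+1}t)$ via Leibniz in time, then control the remaining composition with $\mathring R_\ell^{(q)}$ by chain-rule type estimates. The paper applies the composition lemma from \cite{BLIL15} (Appendix~C) successively to $\chi_{q+1}$, $y\mapsto y^{1/2}$, $y\mapsto y^{-1}$ and $\Gamma_k$, whereas you bundle everything into the single function $H_{q+1}$ and use Fa\`a di Bruno directly; these are equivalent packagings of the same computation.

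There is, however, one arithmetic slip that prevents your argument from yielding the stated exponent. Your bound $\|D^r H_{q+1}(R)\|\lesssim 4^{r(q+1)}(1+|R|)^{3/2}$ is too generous: since $\Gamma_k$ is bounded on $B_{1/2}(\mathrm{Id})$ and $\chi_{q+1}^{1/2}(R)\lesssim(1+|R|)^{1/2}$, the correct growth is $(1+|R|)^{1/2}$. With the exponent $3/2$, substituting $|\mathring R_\ell^{(q)}|\lesssim\ell_{q+1}^{-(d+1)}\|\mathring R^{(q)}\|_{C_{[s-1,s+1]}L^1}$ produces an extra $\ell_{q+1}^{-(d+1)}$, so for $j=0$ you end up with $\ell_{q+1}^{-(N+m)-(N+m+\frac32)(d+1)}$ rather than the claimed $\ell_{q+1}^{-(N+m)-(N+m+\frac12)(d+1)}$. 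Replacing $3/2$ by $1/2$ (and noting $r\le N+m-j\le N+m$) repairs this and the remainder of your argument is correct.

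One further remark: your assertion that the derivatives of $\chi_{q+1}$ of every order are bounded \emph{uniformly in $q$} is not obvious --- near $|R|\sim 4^{-(q+1)}$ the higher derivatives can grow like powers of $4^{q+1}$, depending on the construction. This is harmless, however, since the lemma only requires independence from $\lambda_{q+1}$; the paper likewise absorbs $\|D\chi_{q+1}\|_{N-1}$ into its implicit ($q$-dependent) constant.
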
\

\begin{proof}[Proof of Lemma \ref{lem. subtle estimates for a_k}]
By the Sobolev embedding $W^{d+1,1}\hookrightarrow C$  and  mollification estimates, we obtain
\begin{align}\label{mol}
	\left\|\mathring{R}^{(q)}_\ell\right\|_{N,s}\lesssim \ell^{-(d+1)-N}\left\|\mathring{R}^{(q)}\right\|_{{C_{[s-1,s+1]}L^1}}.
\end{align}
By Leibniz rule, we get
	\begin{align}
		\left[\partial_t^m a_k^{(q+1)}(t)\right]_N
		\lesssim\sum_{j=0}^m\varsigma_{q+1}^{~j}\big|g^{(j)}_{\kappa}\left(\varsigma_{q+1}t\right)\big|\cdot\left[\varrho_{q+1}^{1/2}\cdot\Gamma_k\left({\rm Id}-\frac{\mathring{R}^{(q)}_\ell}{\varrho_{q+1}}\right)\right]_{N+m-j,s}.\nonumber
	\end{align}
Then it suffices to show for all $N\in\mathbb N_0$ that
	\begin{align}\label{uniform estimate for varrho-Gamma}
		\left[\varrho_{q+1}^{1/2}\cdot\Gamma_k\left({\rm Id}-\frac{\mathring{R}^{(q)}_\ell}{\varrho_{q+1}}\right)\right]_{N,s}\lesssim 4^{Nq} \cdot\ell_{q+1}^{-N-{\left(N+\frac{1}{2}\right)}(d+1)}\left(1+\left\|\mathring{R}^{(q)}\right\|_{C_{[s-1,s+1]}L^1}^{N+3/2}\right).
	\end{align}
Using Leibniz rule again, we obtain
	\begin{align}
		\left[\varrho_{q+1}^{1/2}\cdot\Gamma_k\left({\rm Id}-\frac{\mathring{R}^{(q)}_\ell}{\varrho_{q+1}}\right)\right]_{N,s}\lesssim\sum_{j=0}^N\left[\varrho_{q+1}^{1/2}\right]_{j,s}\left[\Gamma_k\left({\rm Id}-\frac{\mathring{R}^{(q)}_\ell}{\varrho_{q+1}}\right)\right]_{N-j,s}.\nonumber
	\end{align}
In the following we prove for all $N\in\mathbb N_0$ that
\begin{align}\label{uniform estimate for varrho^(1/2)}
	\left[\varrho_{q+1}^{1/2}\right]_{N,s}\lesssim\left\{
		\begin{aligned}
			&{\ell_{q+1}^{-\frac{d+1}{2}}\left(1+\left\|\mathring{R}^{(q)}\right\|_{C_{[s-1,s+1]}L^1}^{1/2}\right)}~~~~~~~~~~~~\quad N=0\\
			&\\
			&4^{Nq}\cdot\ell_{q+1}^{-N-N(d+1)}\left(1+\left\|\mathring{R}^{(q)}\right\|_{C_{[s-1,s+1]}L^1}^N\right) \quad N\geq1
		\end{aligned}\right.
\end{align}
and
\begin{align}\label{uniform estimate for Gamma_k}
	\left[\Gamma_k\left({\rm Id}-\frac{\mathring{R}^{(q)}_\ell}{\varrho_{q+1}}\right)\right]_{N,s}\lesssim
			4^{Nq}\cdot\ell_{q+1}^{-N-(d+1)N}\left(1+\left\|\mathring{R}^{(q)}\right\|_{C_{[s-1,s+1]}L^1}^{N+1}\right),
\end{align}
which implies \eqref{uniform estimate for varrho-Gamma} and the final result. In the following, we write $[\cdot]_N$" instead of " $[\cdot]_{N,s}$" if there's no confusion.
\par First, note that the case $N=0$ of (\ref{uniform estimate for varrho^(1/2)}) and (\ref{uniform estimate for Gamma_k}) is immediate by  (\ref{definition for rescaling function varrho})  and the definition of $\chi_{q+1}$ and $\Gamma_k$. So we only show the case $N\geq1$. This is achieved by using the $C^N$-estimates (\ref{C^N-estimate for compositions 1}) and (\ref{C^N-estimates for compositions 2}) for compositions given in \cite{BLIL15} which we recall in Appendix \ref{Appendix C}.
\par By (\ref{C^N-estimate for compositions 1}),  (\ref{definition for rescaling function varrho}) and \eqref{mol}, we have
	\begin{align}\label{uniform estimate for varrho}
		\big[\varrho_{q+1}\big]_N
		&\lesssim \big[\chi_{q+1}\big]_1\left[\mathring{R}^{(q)}_\ell\right]_N+\big\|{\rm D}\chi_{q+1}\big\|_{N-1}\left\|\mathring{R}^{(q)}_\ell\right\|_0^{N-1}\left[\mathring{R}^{(q)}_\ell\right]_N\nonumber\\
		&\lesssim\ell_{q+1}^{-N-(d+1)N}\left(1+\left\|\mathring{R}^{(q)}\right\|_{C_{[s-1,s+1]}L^1}^{N}\right)~,~~~N\geq1.
	\end{align}
	Then, we apply (\ref{C^N-estimates for compositions 2}) to the function $\Psi_1(y)=y^{1/2}~~\big(~y\in(4^{-q}/2,+\infty)~\big)$ and by (\ref{uniform estimate for varrho}) we obtain that
	\begin{align}
		\left[\varrho_{q+1}^{1/2}\right]_{N}&\lesssim\big[\Psi_1\big]_1\big[\varrho_{q+1}\big]_N+\big\|{\rm D}\Psi_1\big\|_{N-1}\big[\varrho_{q+1}\big]_1^N\nonumber\\
		&\lesssim\left(2^q\big\|{\rm D}\chi_{q+1}\big\|_{N-1}+4^{(N-1/2)q}\big\|{\rm D}\chi_{q+1}\big\|_{0}^N\right)\cdot\ell_{q+1}^{-N-(d+1)N}\left(1+\left\|\mathring{R}^{(q)}\right\|_{C_{[s-1,s+1]}L^1}^{N}\right)\nonumber\\
		&\lesssim4^{Nq}\ell_{q+1}^{-N-(d+1)N}\left(1+\left\|\mathring{R}^{(q)}\right\|_{C_{[s-1,s+1]}L^1}^{N}\right)~,~~~N\geq1,\nonumber
	\end{align}
which implies (\ref{uniform estimate for varrho^(1/2)}). Similarly, we apply (\ref{C^N-estimates for compositions 2}) to the function $\Psi_2(y)=y^{-1}~~\big(~y\in(4^{-q}/2,+\infty)~\big)$  and use (\ref{uniform estimate for varrho}) to deduce that
	\begin{align}\label{uniform estimate for varrho^(-1)}
		\left[\varrho_{q+1}^{-1}\right]_{N}
		&\lesssim\big[\Psi_2\big]_1\big[\varrho_{q+1}\big]_N+\big\|{\rm D}\Psi_2\big\|_{N-1}\big[\varrho_{q+1}\big]_1^N\nonumber\\
		&\lesssim4^{(N+1)q}\ell_{q+1}^{-N-(d+1)N}\left(1+\left\|\mathring{R}^{(q)}\right\|_{C_{[s-1,s+1]}L^1}^{N}\right)~,~~~N\geq1.
	\end{align}
Now we proceed with a bound for $\Gamma_k\left({\rm Id}-\frac{\mathring{R}^{(q)}_\ell}{\varrho_{q+1}}\right)$. By   (\ref{C^N-estimate for compositions 1}), we have to estimate the following:
\begin{align*}
	\left[\frac{\mathring{R}^{(q)}_\ell}{\varrho_{q+1}}\right]_N+\left\|\frac{\mathring{R}^{(q)}_\ell}{\varrho_{q+1}}\right\|_0^{N-1}\left[\frac{\mathring{R}^{(q)}_\ell}{\varrho_{q+1}}\right]_N\lesssim 	\left[\frac{\mathring{R}^{(q)}_\ell}{\varrho_{q+1}}\right]_N.
\end{align*}
	 Then by \eqref{mol}, (\ref{uniform estimate for varrho^(-1)}) and (\ref{definition for rescaling function varrho}), we have
	\begin{align*}
		\left[\frac{\mathring{R}^{(q)}_\ell}{\varrho_{q+1}}\right]_N
		&\lesssim\sum_{m=1}^N\left[\varrho_{q+1}^{-1}\right]_{m}\left[\mathring{R}^{(q)}_\ell\right]_{N-m}+\left[\varrho_{q+1}^{-1}\right]_{0}\left[\mathring{R}^{(q)}_\ell\right]_{N}\nonumber\\
		&\lesssim\sum_{m=1}^N\left[4^{(m+1)q}\cdot\ell_{q+1}^{-N-N(d+1)}\left(1+\left\|\mathring{R}^{(q)}\right\|_{C_{[s-1,s+1]}L^1}^{m+1}\right)\right.\nonumber\\
		&\ \ \ \ \ \ \ \ \ \ \ \ \left.+4^{(m+1)q}\cdot\ell_{q+1}^{-N-N(d+1)}\left(1+\left\|\mathring{R}^{(q)}\right\|_{C_{[s-1,s+1]}L^1}\right)\right]\nonumber\\
		&\lesssim4^{(N+1)q}\cdot\ell_{q+1}^{-N-N(d+1)}\left(1+\left\|\mathring{R}^{(q)}_\ell\right\|_{C_{[s-1,s+1]}L^1}^{N+1}\right)~,~~~N\geq1.
	\end{align*}
Thus we obtain \eqref{uniform estimate for Gamma_k} and the proof is complete. 	
\end{proof}\

\begin{proof}[Proof for Proposition \ref{prop. Finiteness of All Moments of Supremum Norms}.] By the construction we see that $v^{(q+1)}$ and $\mathring{R}^{(q+1)}$ only depends on  $\lambda_1,...,\lambda_{q+1}$.  Hence, we only need to show the right hand side of (\ref{finiteness of all moments of v^q and R^q}) is finite.
\par We prove the estimate by induction. For $q=0$~, (\ref{finiteness of all moments of v^q and R^q}) follows directly from  (\ref{start point of the iteration}) and (\ref{z finiteness of moments}). Now assume (\ref{finiteness of all moments of v^q and R^q}) holds for some $q\in\mathbb N_0$ by induction. We are then prove \eqref{finiteness of all moments of v^q and R^q} for $q+1$. By (\ref{subtle estimate for partial a_k}) we have
\begin{align}\label{uniform estimate for partial a_k}
	&\left\|a_k^{(q+1)}\right\|_{N,s}\lesssim1+\left\|\mathring{R}^{(q)}\right\|_{C_{[s-1,s+1]}L^1}^{N+3/2}~,~~~\forall~s\geq0 ;~ N\in\mathbb N_0,~k\in\Lambda~.
\end{align}
Here, the implicit finite constant is deterministic and only depends on $N$, $q$ and $\lambda_{q+1}$.
\par  We start with the estimate of perturbations.
\par\noindent$\bullet$ {\bf Estimates of perturbations :}

By (\ref{definition of principal pertutbation}), (\ref{definition of corrector perturbation}), (\ref{estimates for Mikado flows W}), (\ref{estimates for Mikado flows V}) and (\ref{uniform estimate for partial a_k}), we have for all $s\geq0$ that
\begin{align}\label{power estimate of w^(p)}
	\left\|\omega^{(p)}_{q+1}\right\|_{N,s}
	&\lesssim\sum_{k\in\Lambda}\left\|a_k^{(q+1)}\right\|_{N,s}\left\|\mathbb W^{(q+1)}_k\right\|_{W^{N,\infty}}\nonumber\\
	&\lesssim1+\left\|\mathring{R}^{(q)}\right\|_{C_{[s-1,s+1]}L^1}^{N+3/2}~;
\end{align}
\begin{align}\label{power estimate of w^(c)}
	\left\|\omega^{(c)}_{q+1}\right\|_{N,s}
	&\lesssim\sum_{k\in\Lambda}\left\|\nabla a_k^{(q+1)}\right\|_{N,s}\left\|\mathbb V^{(q+1)}_k\right\|_{W^{N,\infty}}~\nonumber\\
	&\lesssim1+\left\|\mathring{R}^{(q)}\right\|_{C_{[s-1,s+1]}L^1}^{N+5/2}~;
\end{align}
and by (\ref{definition of temporal perturbation}),  mollification estimate and Sobolev embedding, we have for all $s\geq0$ that
\begin{align}\label{power estimate of w^(t)}
	\left\|\omega^{(t)}_{q+1}\right\|_{N,s}
	&\lesssim\left\|\mathring{R}^{(q)}\right\|_{C_{[s-1,s+1]}L^1}.
\end{align}
For $\tilde \omega^{(p)}_{q+1},\tilde \omega^{(c)}_{q+1},\tilde \omega^{(t)}_{q+1} $ we see that the $n$-th derivative of $\Theta_{q+1}$ behaves like $\ell_{q+1}^{-n/2}$ does not pose any problems as the $C^N$ norm of $\omega^{(p)}_{q+1}, \omega^{(c)}_{q+1}, \omega^{(t)}_{q+1}$ has more powers of $\ell_{q+1}^{-1}$.
Taking $m$th-moment and then supremum for $s\in[0,\infty)$ in the above estimates, we have for $m\in\mathbb N$ that
\begin{align}\label{moment estimates of v^(q+1)}
	\sup_{s\geq0}\mathbb E\left\|v^{(q+1)}\right\|_{N,s}^m
	&\lesssim\sup_{s\geq0}\mathbb E\left\|v^{(q)}_{\ell_{q+1}}\right\|_{N,s}^m+\sup_{s\geq0}\mathbb E\big\|\omega_{q+1}\big\|_{N,s}^m\nonumber\\
	&\lesssim\sup_{s\geq0}\mathbb E\left\|v^{(q)}\right\|_{N,s}^m+1+\sup_{s\geq0}\mathbb E\left\|\mathring{R}^{(q)}\right\|_{C_{s}L^1}^{\left(N+\frac{5}{2}\right)m}<\infty~.
\end{align}
\par\noindent$\bullet$ {\bf Estimates of Reynold stress errors :}

By (\ref{definition of molli-communication Reynold stress}) and  mollification estimate, we have for all $s\geq0$ that
\begin{align}
	\left\|\mathring{R}_{com}^{\ell_{q+1}}\right\|_{C_{s}L^1}
	&\lesssim\left\|v^{(q)}\right\|_{0,[s-1,s+1]}^2+\left\|z\right\|_{C_{[s-1,s+1]}L^2}^2~.\nonumber
\end{align}
Moreover, by (\ref{definition of R_far}), (\ref{definition of R_osc,x}), {(\ref{def. cutoff Theta}),} (\ref{L^p estimate for bilinear reverse divergence}), (\ref{estimates for Mikado flows W}) and (\ref{uniform estimate for partial a_k}), we have for all $s\geq0$ that
\begin{align}
	\left\|\mathring{R}_{far}^{(q+1)}\right\|_{C_{s}L^1}
	&\lesssim{\big\|\Theta_{q+1}\big\|_{0}^2}\sum_{k\neq k'}\left\|a_k^{(q+1)}\right\|_{0,s}\left\|a_{k'}^{(q+1)}\right\|_{0,s}\left\|\mathbb W^{(q+1)}_k\right\|_{L^2}\left\|\mathbb W^{(q+1)}_{k'}\right\|_{L^2}\nonumber\\
	&\lesssim1+\left\|\mathring{R}^{(q)}\right\|^{3}_{C_{[s-1,s+1]}L^1}~,\nonumber\\
	\left\|\mathring{R}_{osc,x}^{(q+1)}\right\|_{C_{s}L^1}
	&\lesssim{\big\|\Theta_{q+1}\big\|_{0}^2}\sum_{k\in\Lambda}\left\|\nabla\left|a_k^{(q+1)}\right|^2\right\|_{C_{s}C^1}\left\|\mathbb W_k^{(q+1)}\mathring\otimes \mathbb W_k^{(q+1)}\right\|_{L^1}\nonumber\\
	&\lesssim\sum_{k\in\Lambda}\left\|a_k^{(q+1)}\right\|_{1,s}\left\|a_k^{(q+1)}\right\|_{2,s}\left\|\mathbb W_k^{(q+1)}\right\|_{L^2}^2\nonumber\\
	&\lesssim1+\left\|\mathring{R}^{(q)}\right\|_{C_{[s-1,s+1]}L^1}^{6}~.\nonumber
\end{align}
By (\ref{definition for R_osc,t}), {(\ref{def. cutoff Theta})} and  mollification estimate, we have for all $s\geq0$ that
\begin{align}
	\left\|\mathring{R}_{osc,t}^{(q+1)}\right\|_{C_{s}L^1}
	&\lesssim\left\|\mathring{R}^{(q)}\right\|_{C_{[s-1,s+1]}L^1}~,\nonumber
\end{align}
and by (\ref{definition of R_lin}), {(\ref{def. cutoff Theta}),} ${L^2}$-boundedness of the operators $\mathcal R$ and $\mathcal R{\rm div}$,
 (\ref{definition of R_cor}), (\ref{power estimate of w^(p)}), (\ref{power estimate of w^(c)}) and (\ref{power estimate of w^(t)}), we have for all $s\in[0,\infty)$ that
\begin{align}
	\left\|\mathring{R}_{lin}^{(q+1)}\right\|_{C_{s}L^1}
	&\lesssim{\big\|\Theta_{q+1}\big\|_{0}}\left\|\omega^{(p)}_{q+1}+\omega^{(c)}_{q+1}\right\|_{1,s}+\big\|\omega_{q+1}\big\|_{1,s}+\left\|z\right\|_{C_{[s-1,s+1]}L^2}\nonumber\\
	&~~~~~+\big\|\omega_{q+1}\big\|_{C_{s}L^2}\left(\left\|v^{(q)}\right\|_{C_{[s-1,s+1]}L^2}+\left\|z\right\|_{C_{[s-1,s+1]}L^2}\right)\nonumber\\
	&\lesssim1+\left\|\mathring{R}^{(q)}\right\|_{C_{[s-1,s+1]}L^1}^{5}+\left\|v^{(q)}\right\|_{0,[s-1,s+1]}^2+\left\|z\right\|_{C_{[s-1,s+1]}L^2}^2~,\nonumber\\
	\left\|\mathring{R}_{cor}^{(q+1)}\right\|_{C_{s}L^1}
	&\lesssim{\big\|\Theta_{q+1}\big\|_{0}^2}\left\|\omega^{(p)}_{q+1}\right\|_{0,s}^2+{\big\|\Theta_{q+1}\big\|_{0}^2}\left\|\omega^{(c)}_{q+1}\right\|_{0,s}^2+{\big\|\Theta_{q+1}\big\|_{0}^4}\left\|\omega^{(t)}_{q+1}\right\|_{0,s}^2\nonumber\\
	&\lesssim1+\left\|\mathring{R}^{(q)}\right\|_{C_{[s-1,s+1]}L^1}^{5}~.\nonumber
\end{align}
Finally by (\ref{definition of R_com}), and  mollification estimate, we have for all $s\geq0$ that
\begin{align}
	\left\|\mathring{R}_{com}^{(q+1)}\right\|_{C_{s}L^1}
	&\lesssim\left\|v^{(q+1)}\right\|_{0,s}^2+\left\|z\right\|_{C_{[s-1,s+1]}L^2}^2\nonumber\\
	&\lesssim1+\left\|\mathring{R}^{(q)}\right\|_{C_{[s-1,s+1]}L^1}^{5}+\left\|v^{(q)}\right\|_{0,s}^2+\left\|z\right\|_{C_{[s-1,s+1]}L^2}^2~;~~~\nonumber
\end{align}
{and by \ref{definition of R_cut}), (\ref{def. cutoff Theta}), standard mollification estimate, $L^1$-boundedness of the operator $\mathcal R$, (\ref{power estimate of w^(p)}), (\ref{power estimate of w^(c)}) and (\ref{power estimate of w^(t)}), we have for all $s\in[0,\infty)$ that
\begin{align}
	\left\|\mathring{R}_{cut}^{(q+1)}\right\|_{C_{s}L^1}
	&\lesssim\left\|\mathring{R}_{\ell}^{(q)}\right\|_{C_{s}L^1}+\left\|\omega^{(p)}_{q+1}\right\|_{0,s}+\left\|\omega^{(c)}_{q+1}\right\|_{0,s}+\left\|\omega^{(t)}_{q+1}\right\|_{0,s}\nonumber\\
	&\lesssim1+\left\|\mathring{R}^{(q)}\right\|_{C_{[s-1,s+1]}L^1}^{5/2}~.\nonumber
\end{align}}
Taking $m$th-moment and then supremum for $s\geq0$ in the above estimates, and together by (\ref{moment estimates of v^(q+1)}), we have for $m\in\mathbb N$ that
\begin{align}
	\sup_{s\geq0}\mathbb E\left\|\mathring{R}^{(q+1)}\right\|_{C_{s}L^1}^m
	&\lesssim1+\sup_{s\geq0}\mathbb E\left\|\mathring{R}^{(q)}\right\|_{C_{s}L^1}^{6m}+\sup_{s\geq0}\mathbb E\left\|v^{(q)}\right\|_{0,s}^{2m}+\sup_{s\geq0}\mathbb E\big\|z\big\|_{C_{s}L^2}^{2m}<\infty~.
\end{align}
This completes the proof.	
\end{proof}\

\par As the next step, we show the following mollification convergence which are used in the estimate of approximate velocity and the Reynold error.\\

\begin{proposition}\label{prop. uniform convergence of mollified z}
Assume $u_0\in L^2_\sigma$ and $\textrm{Tr}(G^*G)<\infty$.
Then, for each $q\in \mathbb N_0$,
\begin{align}\label{uniform convergence of mollified z}
	&\big\|z-z_\ell\big\|_{\bar L^2(\Omega,L^2_sL^2)}
	\longrightarrow0~,\ \ \ as\ \ \ \ell\rightarrow0~,
\end{align}
\begin{align}\label{uniform convergence of mollified v^(q)}
	&\left\|v^{(q)}-v^{(q)}_\ell\right\|_{\bar L^2(\Omega,L^2_sL^2)}+\left\|v^{(q)}-v^{(q)}_\ell\right\|_{ L^\alpha(\Omega,\mathbb{Z}^{p,r})}+\left\|v^{(q)}-v^{(q)}_\ell\right\|_{\mathbf{E}_p}\lesssim C_q\ell~,
\end{align}
\begin{align}\label{uniform convergence of mollified v^(q)Oz}
	&\left\|v^{(q)}\mathring\otimes z-\left(v^{(q)}\mathring\otimes z\right)_\ell\right\|_{\bar L^1(\Omega,L^1_sL^1)}
	\longrightarrow0~,\ \ \ as\ \ \ \ell\rightarrow0~.
\end{align}	
Here $C_q$ is a constant depending only on $\lambda_1,...\lambda_{q}$ and $\left(v^{(q)}\mathring\otimes z\right)_\ell:=\left(v^{(q)}\mathring\otimes z\right)\ast_x\phi_\ell\ast_t\varphi_\ell$ .
\end{proposition}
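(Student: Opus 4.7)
\emph{Proof plan.} The strategy is to decompose
\begin{equation*}
z - z_\ell = \big(z - z\ast_t\varphi_\ell\big) + \big(z - z\ast_x\phi_\ell\big)\ast_t\varphi_\ell,
\end{equation*}
so that the spatial and temporal mollifiers can be treated separately, and to use $z = e^{t(\Delta-{\rm I})}u_0 + W_{con}(t)$ from \eqref{z represeantation} to peel off the deterministic contribution. The main obstacle throughout is to make every bound \emph{uniform in $s\geq 0$}; the key structural input for this is a stationarity-type second-moment control of $W_{con}$.

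To prove \eqref{uniform convergence of mollified z}, the temporal mollification is absorbed via Theorem \ref{Thm. z regularity} with $m=2$: this supplies a uniform-in-$s$ bound for $\mathbb E\|W_{con}\|^2_{C^{1/2-\delta}_{[s,s+1]}L^2}$ and hence $\|z - z\ast_t\varphi_\ell\|^2_{\bar L^2(\Omega, L^2_s L^2)}\lesssim \ell^{1-2\delta}$, the contribution of $e^{t(\Delta-{\rm I})}u_0$ being harmless since it is smooth in $t$ with exponential decay. For the spatial mollifier, the deterministic part is handled by the semigroup identity $e^{t(\Delta-{\rm I})}(u_0\ast_x\phi_\ell - u_0)$, whose $L^2$-norm is bounded by $\|u_0\ast_x\phi_\ell - u_0\|_{L^2}\to 0$, uniformly in $t$. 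For $W_{con}$, since $I - \phi_\ell\ast_x$ is a Fourier multiplier and therefore commutes with $\mathbf P$ and $e^{t(\Delta-{\rm I})}$, the It\^o isometry gives
\begin{equation*}
\mathbb E\big\|(I - \phi_\ell\ast_x)W_{con}(t)\big\|^2_{L^2} \leq \tfrac{1}{2}\sum_k \mu_k\, \|(I - \phi_\ell\ast_x) f_k\|^2_{L^2},
\end{equation*}
where $(\mu_k,f_k)$ is the spectral decomposition of $GG^*$; this bound is \emph{independent of $t$} and tends to $0$ as $\ell\to 0$ by dominated convergence on the series, using ${\rm Tr}(GG^*)=\sum_k\mu_k <\infty$. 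Young's inequality in time then absorbs the outer convolution with $\varphi_\ell$.

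For \eqref{uniform convergence of mollified v^(q)} the situation is much simpler, because $v^{(q)} \in C^\infty_\sigma$ and Proposition \ref{prop. Finiteness of All Moments of Supremum Norms} supplies every moment of $\|v^{(q)}\|_{N,s}$ uniformly in $s$, with constants depending only on $\lambda_1,\dots,\lambda_q$. Standard mollification estimates $\|f - f_\ell\|_0 \lesssim \ell\, \|f\|_{1,[s-1,s+1]}$ applied pathwise and then in $L^\alpha(\Omega)$ immediately yield the $C_q\ell$ rate in all three target norms simultaneously.

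For \eqref{uniform convergence of mollified v^(q)Oz} the commutator identity
\begin{equation*}
(v^{(q)}z)\ast_x\phi_\ell - v^{(q)}(z\ast_x\phi_\ell) = \int \big[v^{(q)}(\cdot-y) - v^{(q)}(\cdot)\big]\, z(\cdot-y)\,\phi_\ell(y)\,dy
\end{equation*}
reduces the spatial mollification error to two pieces: $\|v^{(q)}\|_{L^2}\|z - z\ast_x\phi_\ell\|_{L^2}$, controlled by step 1 together with Cauchy--Schwarz, and $\ell\,\|v^{(q)}\|_{C^1}\|z\|_{L^2}$, controlled via Proposition \ref{prop. Finiteness of All Moments of Supremum Norms}. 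The temporal mollification relies on the fact that $v^{(q)}\mathring\otimes z$ inherits the $C^{1/2-\delta}_s L^1$ H\"older regularity from $z$ via the product rule (with an extra factor of $\|v^{(q)}\|_{C^{1/2-\delta}_s L^2}+\|v^{(q)}\|_{C_s L^2}$, finite in every moment), producing a rate $\ell^{1/2-\delta}$ uniformly in $s$, exactly as in step 1.
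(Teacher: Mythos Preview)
Your approach is essentially the same as the paper's: both decompose the mollification error into spatial and temporal pieces, split $z = z^{in} + W_{con}$ with $z^{in}=e^{t(\Delta-I)}u_0$, invoke Theorem \ref{Thm. z regularity} for the time-H\"older regularity of $W_{con}$, use the It\^o isometry to control the spatial mollification of $W_{con}$ uniformly in $t$, and appeal to Proposition \ref{prop. Finiteness of All Moments of Supremum Norms} for $v^{(q)}$. The paper packages these estimates through auxiliary quantities $\mathbb T_\ell$, $\mathbb S_\ell$ in a preparatory lemma (Lemma \ref{lem. T,S-estimates}), and for the product term \eqref{uniform convergence of mollified v^(q)Oz} derives a bilinear estimate in those quantities rather than your commutator identity, but this is largely presentation.

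There is one genuine gap. You dismiss $z^{in}$ as ``harmless since it is smooth in $t$ with exponential decay'' and use this to assert the rate $\|z - z\ast_t\varphi_\ell\|^2_{\bar L^2(\Omega, L^2_s L^2)}\lesssim \ell^{1-2\delta}$. But for $u_0 \in L^2_\sigma$ with no additional regularity, $z^{in}$ is \emph{not} smooth (nor even H\"older) in $t$ at $t=0$: one has only $\|z^{in}(t)-u_0\|_{L^2}\to 0$ with no rate. The same issue recurs in your treatment of \eqref{uniform convergence of mollified v^(q)Oz}, where you assert that $v^{(q)}\mathring\otimes z$ inherits $C^{1/2-\delta}_s L^1$ regularity from $z$. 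The paper handles $z^{in}$ by splitting the supremum over $s$: for $s\geq 1$ the semigroup identity gives $\|z^{in}(t)-z^{in}(t-\tau)\|_{L^2}\leq\|(e^{\tau(\Delta-I)}-I)u_0\|_{L^2}$, which is uniform in $t$ and tends to $0$ (no rate) by strong continuity; for $0\leq s\leq 1$ one uses uniform continuity of $z^{in}$ on the compact interval $[-2,2]$ together with dominated convergence. Your argument is easily repaired along these lines, but the claimed rate $\ell^{1-2\delta}$ for the full $z$ must be weakened to mere convergence --- which is all that \eqref{uniform convergence of mollified z} asserts.
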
\

To prove Proposition \ref{prop. uniform convergence of mollified z}, we give a lemma. To this end, we introduce the following notations for stochastic vector fields $u\in\bar L^2\big(\Omega,L^2_sL^2\big)$ and tensor fields $R\in\bar L^1\big(\Omega,L^1_sL^1\big)$ :
\begin{align}
	\mathbb T_\ell\big(u;s_0\big)&:=\sup_{s\geq s_0}\mathbb E\int^{s+1}_{s}\int_{\mathbb R}\big\|u(t)-u(t-\tau)\big\|_{L^2}^2\varphi_\ell(\tau){\rm d}\tau{\rm d}t~,\nonumber\\
	\mathbb S_\ell\big(u;s_0\big)&:=\sup_{s\geq s_0}\mathbb E\int^{s+1}_{s}\int_{\mathbb R^d}\big\|u(t)-\tau_{-y}u(t)\big\|_{L^2}^2\phi_\ell(y){\rm d}y{\rm d}t~,\nonumber\\
	\mathbb T'_\ell\big(R;s_0\big)&:=\sup_{s\geq s_0}\mathbb E\int^{s+1}_{s}\int_{\mathbb R}\big\|R(t)-R(t-\tau)\big\|_{L^1}\varphi_\ell(\tau){\rm d}\tau{\rm d}t~,\nonumber\\
	\mathbb S'_\ell\big(R;s_0\big)&:=\sup_{s\geq s_0}\mathbb E\int^{s+1}_{s}\int_{\mathbb R^d}\big\|R(t)-\tau_{-y}R(t)\big\|_{L^1}\phi_\ell(y){\rm d}y{\rm d}t~,\nonumber
\end{align}
where  $s_0\in\mathbb{R},\tau_{y}u:=u(\cdot+y)$. For simplicity we write $\mathbb T_\eta\big(u\big):=\mathbb T_\eta\big(u;0\big)$.

\begin{lemma}\label{lem. T,S-estimates}
{We} have the following estimates:
	\begin{align}
		\sup_{s\geq 0}\mathbb E\int^{s+1}_{s}\big\|u(t)-u_\ell(t)\big\|_{L^2}^2{\rm d}t
	&\lesssim\mathbb T_\ell\big(u\big)+\mathbb S_\ell\big(u;-1\big)~,\label{sup-estimate for vector fields}\\
	\sup_{s\geq 0}\mathbb E\int^{s+1}_{s}\big\|R(t)-R_\ell(t)\big\|_{L^1}{\rm d}t
	&\leq\mathbb T'_\ell\big(R\big)+\mathbb S'_\ell\big(R;-1\big)~,\label{sup-estimate for matrices}
	\end{align}

In particular, we have
	\begin{align}\label{sup-estimate for uOv}
		&\big\|v^{(q)}\mathring\otimes z-(v^{(q)}\mathring\otimes z)_\ell\big\|_{\bar L^1(\Omega,L^1_sL^1)}\\\nonumber
		&\lesssim\left(\big\|v^{(q)}\big\|_{\bar L^2(\Omega,L^2_sL^2)}+\big\|z\big\|_{\bar L^2(\Omega,L^2_sL^2)}+\|u_0\|_{L^2}\right)\Big(\mathbb T_\ell\big(v^{(q)}\big)+\mathbb S_\ell\big(v^{(q)}\big)+\mathbb T_\ell\big(z\big)+\mathbb S_\ell\big(z;-1\big)\Big)^{1/2}.
	\end{align}
\end{lemma}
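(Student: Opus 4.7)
The plan is to decompose the mollification error into a purely spatial part and a purely temporal part, and then bound each using Jensen's inequality applied to the probability densities $\phi_\ell$ and $\varphi_\ell$.

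For \eqref{sup-estimate for vector fields}, I would write
\[
u - u_\ell = \bigl(u - u \ast_x \phi_\ell\bigr) + \bigl(u \ast_x \phi_\ell - (u \ast_x \phi_\ell) \ast_t \varphi_\ell\bigr)
\]
and represent each summand as an average of shifts, e.g.\ $(u - u \ast_x \phi_\ell)(t) = \int (u(t) - \tau_{-y}u(t))\phi_\ell(y)\,dy$ for the spatial piece, and analogously in $\tau$ for the temporal piece. Jensen's inequality then yields the pointwise bound
\[
\|u(t) - (u \ast_x \phi_\ell)(t)\|_{L^2}^2 \leq \int \|u(t) - \tau_{-y}u(t)\|_{L^2}^2 \phi_\ell(y)\,dy,
\]
while Young's inequality in $x$ (using $\|\phi_\ell\|_{L^1}=1$) combined with Jensen in $\tau$ gives
\[
\|(u \ast_x \phi_\ell)(t) - u_\ell(t)\|_{L^2}^2 \leq \int \|u(t) - u(t-\tau)\|_{L^2}^2 \varphi_\ell(\tau)\,d\tau.
\]
Integrating over $t \in [s,s+1]$, taking expectation, and then supremum over $s \geq 0$ produces $\mathbb S_\ell(u;0) + \mathbb T_\ell(u) \leq \mathbb S_\ell(u;-1) + \mathbb T_\ell(u)$. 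The matrix-field estimate \eqref{sup-estimate for matrices} follows by exactly the same argument with $L^1$ in place of $L^2$ and Minkowski replacing Jensen.

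For the bilinear quantity \eqref{sup-estimate for uOv}, I would exploit the product-rule identity
\[
R(t,x) - R(t',x') = \bigl(v^{(q)}(t,x) - v^{(q)}(t',x')\bigr) \otimes z(t,x) + v^{(q)}(t',x') \otimes \bigl(z(t,x) - z(t',x')\bigr)
\]
for $R = v^{(q)} \mathring\otimes z$, applied with $(t',x') = (t, x-y)$ on the spatial part of $R - R_\ell$ and with $(t',x') = (t-\tau, x)$ on the temporal part. Cauchy--Schwarz in $x$ converts each summand into a product of an $L^2$-difference norm (of $v^{(q)}$ or $z$) with a plain $L^2$-norm (of the other factor). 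A further Cauchy--Schwarz in $(t,\tau,y)$ against the probability measure $\phi_\ell(y)\varphi_\ell(\tau)\,dy\,d\tau\,dt$ then separates the two kinds of factors, producing $\bigl(\mathbb T_\ell(v^{(q)}) + \mathbb S_\ell(v^{(q)}) + \mathbb T_\ell(z) + \mathbb S_\ell(z;-1)\bigr)^{1/2}$ from the difference factor and the ambient $\bar L^2(\Omega,L^2_s L^2)$ norms of $v^{(q)}$ and $z$ from the other.

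The delicate point is the boundary bookkeeping at $t = 0$: the one-sided temporal mollifier $\varphi_\ell$ forces evaluation of $v^{(q)}(t-\tau)$ and $z(t-\tau)$ for $\tau \in [0,\ell]$, and hence at times which may be negative. The extensions $v^{(q)} \equiv 0$ (since $v^{(q)}(0) = 0$) and $z \equiv u_0$ for $t < 0$ are exactly what is needed to match the right-hand side of \eqref{sup-estimate for uOv}: the enlarged index set in $\mathbb S_\ell(z;-1)$ absorbs the spatial variation of the initial datum on the backwards-extended window $[-1, s+1]$, while the extra term $\|u_0\|_{L^2}$ absorbs the $L^2$-size of $z(t-\tau)=u_0$ on the portion $t-\tau \in [-\ell,0)$, which would otherwise not be controlled by $\|z\|_{\bar L^2(\Omega, L^2_s L^2)}$. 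Modulo this bookkeeping, the proof is a mechanical combination of Jensen, Young, Minkowski, and Cauchy--Schwarz; the hardest aspect is making sure that the correct probability weights are carried through so that the final Cauchy--Schwarz yields exactly the four $\mathbb T_\ell/\mathbb S_\ell$ quantities declared in the lemma.
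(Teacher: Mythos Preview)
Your proposal is correct and follows essentially the same approach as the paper: split the mollification error into spatial and temporal pieces, apply Jensen/Young to reduce to the $\mathbb T_\ell,\mathbb S_\ell$ quantities, and for the bilinear term use the product-rule splitting followed by Cauchy--Schwarz. The only cosmetic differences are that the paper decomposes in the opposite order (temporal first, then spatial, which is why the spatial window is enlarged to $[s-1,s+1]$ and $\mathbb S_\ell(\cdot;-1)$ appears), and that the paper's product-rule splitting places the time-shift on $z$ rather than on $v^{(q)}$; with your splitting the shifted factor is $v^{(q)}(t-\tau)$ (which vanishes for $t-\tau<0$), so your explanation of the $\|u_0\|_{L^2}$ term is slightly mismatched with your own identity, but either splitting yields the stated bound.
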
\

\begin{proof}[Proof of Lemma \ref{lem. T,S-estimates}.]
A straightforward application of H\"older's inequality and Fubbini's Theorem gives
	\begin{align}
		&~\int_s^{s+1}\big\|u(t)-u_\ell(t)\big\|_{L^2}^2{\rm d}t\nonumber\\
		&\lesssim\int_s^{s+1}\big\|u(t)-(\varphi_\ell\ast_t u)(t)\big\|_{L^2}^2{\rm d}t+\int_{s-1}^{s+1}\big\|u(t)-(\phi_\ell\ast_x u)(t)\big\|_{L^2}^2{\rm d}t\nonumber\\
		&\lesssim\int_s^{s+1}\int_{\mathbb R}\big\|u(t)-u(t-\tau)\big\|_{L^2}^2\varphi_\ell(\tau){\rm d}\tau{\rm d}t+\int_{s-1}^{s+1}\int_{\mathbb R^d}\big\|u(t)-\tau_{-y}u(t)\big\|_{L^2}^2\phi_\ell(y){\rm d}y{\rm d}t~;\nonumber
	\end{align}
which implies (\ref{sup-estimate for vector fields}). Similarly (\ref{sup-estimate for matrices}) holds.

Now we apply \eqref{sup-estimate for matrices} to $v^{(q)}\mathring\otimes z$. Note that for $u,v\in L^2_sL^2$
	\begin{align}
		\big\|(u\mathring\otimes v)(t)-(u\mathring\otimes v)(t-\tau)\big\|_{L^1}
		&\lesssim\big\|u(t)\big\|_{L^2}\big\|v(t)-v(t-\tau)\big\|_{L^2}
		+\big\|v(t-\tau)\big\|_{L^2}\big\|u(t)-u(t-\tau)\big\|_{L^2}~,\nonumber\\
		\big\|(u\mathring\otimes v)(t)-\tau_{-y}(u\mathring\otimes v)(t)\big\|_{L^1}
		&\lesssim\big\|u(t)\big\|_{L^2}\big\|v(t)-\tau_{-y}v(t)\big\|_{L^2}
		+\big\|\tau_{-y}v(t)\big\|_{L^2}\big\|u(t)-\tau_{-y}u(t)\big\|_{L^2}~.\nonumber
	\end{align}
Applying H\"older's inequality we obtain
	\begin{align}
		\mathbb T'_\ell\Big(v^{(q)}\mathring\otimes z\Big)
		&\lesssim\big\|v^{(q)}\big\|_{\bar L^2(\Omega,L^2_sL^2)}\cdot\mathbb T_\ell\big(z\big)^{1/2}+\left(\sup_{s\geq0}\mathbb E\int^{s+1}_s\int_{\mathbb R}\big\|z(t-\tau)\big\|_{L^2}^2\varphi_\ell(\tau){\rm d}\tau{\rm d}t\right)^{1/2}\mathbb T_\ell\big(v^{(q)}\big)^{1/2},\nonumber\\
		\mathbb S'_\ell\Big(v^{(q)}\mathring\otimes z\Big)
		&\lesssim\big\|v^{(q)}\big\|_{\bar L^2(\Omega,L^2_sL^2)}\cdot\mathbb S_\ell\big(z;-1\big)^{1/2}+\left(\sup_{s\geq-1}\mathbb E\int^{s+1}_s\int_{\mathbb R^d}\big\|\tau_{-y}z(t)\big\|_{L^2}^2\phi_\ell(y){\rm d}y{\rm d}t\right)^{1/2}\mathbb S_\ell\big(v^{(q)}\big)^{1/2}.\nonumber
	\end{align}
	Moreover, we have
	\begin{align}
		\sup_{s\geq0}\mathbb E\int^{s+1}_s\int_{\mathbb R}\big\|z(t-\tau)\big\|_{L^2}^2\varphi_\ell(\tau){\rm d}\tau{\rm d}t
		&=\sup_{s\geq0}\int_{\mathbb R}\left(\mathbb E\int^{s-\tau+1}_{s-\tau}\big\|z(t)\big\|_{L^2}^2{\rm d}t\right)\varphi_\ell(\tau){\rm d}\tau\nonumber\\
		&\lesssim\big\|z\big\|_{\bar L^2(\Omega,L^2_sL^2)}^2+\|u_0\|_{L^2}^2~,\nonumber\\
		\sup_{s\geq-1}\mathbb E\int^{s+1}_s\int_{\mathbb R^d}\big\|\tau_{-y}z(t)\big\|_{L^2}^2\phi_\ell(y){\rm d}y{\rm d}t
		&\leq\big\|z\big\|_{\bar L^2(\Omega,L^2_sL^2)}^2+\|u_0\|_{L^2}^2~.\nonumber
	\end{align}
which implies the final result.
\end{proof}\

\par With Lemma \ref{lem. T,S-estimates} in hand, we are ready to prove Proposition \ref{prop. uniform convergence of mollified z}.

\begin{proof}[Proof of Proposition \ref{prop. uniform convergence of mollified z}]
We first note that
	\begin{align}
		\mathbb T_\ell\Big(v^{(q)}\Big)+\mathbb S_\ell\Big(v^{(q)};-1\Big)\lesssim\ell_{q+1}^{2}\cdot\sup_{s\in\mathbb R}\mathbb E\left\|v^{(q)}\right\|_{1,s}^2~,\nonumber
\end{align}
{which} implies \eqref{uniform convergence of mollified v^(q)} holds.
Then by lemma \ref{lem. T,S-estimates} it suffices to prove
\begin{align}\label{T,S-convergence of z}
	\mathbb T_\ell\big(z\big)+\mathbb S_\ell\big(z;-1\big)\longrightarrow0~~~\textrm{as}~~~\ell\rightarrow0~.
\end{align}
For the first term it is easy to see that
	$$\mathbb T_\ell\big(z\big)\lesssim\mathbb T_\ell\big(W_{con}\big)+\mathbb T_\ell\big(z^{in}\big),$$
	and
$$\mathbb T_\ell\big(W_{con}\big)\lesssim \ell_{q+1}^{1-2\delta}\sup_{s\geq0}\mathbb E\|W_{con}\|^2_{C^{1/2-\delta}_{[s-1,s+1]}L^2}.$$
Here $z^{in}=e^{t(\Delta-I)}u_0$.

In the following we first consider  $\mathbb T_\ell\big(z^{in}\big)$.
Using the dominated convergence theorem and the fact that $z^{in}$ is $\mathbb P-a.s.$ uniformly continuous in $L^2$ on $[-2,2]$ we obtain
\begin{align}
	&~~~\sup_{-1\leq s\leq 1}\mathbb E\int^{s+1}_{s}\int_{\mathbb R}\big\|z^{in}(t)-z^{in}(t-\tau)\big\|_{L^2}^2\varphi_\ell(\tau){\rm d}\tau{\rm d}t\nonumber\\
	&\lesssim\mathbb E\int^{2}_{-1}\int_{\mathbb R}\big\|z^{in}(t)-z^{in}(t-\tau)\big\|_{L^2}^2\varphi_\ell(\tau){\rm d}\tau{\rm d}t\longrightarrow0~~~as~~~\ell\rightarrow0~.\nonumber
\end{align}
Moreover, we have
\begin{align*}
	&~~~~\sup_{s\geq 1}\mathbb E\int^{s+1}_{s}\int_{\mathbb R}\big\|z^{in}(t)-z^{in}(t-\tau)\big\|_{L^2}^2\varphi_\ell(\tau){\rm d}\tau{\rm d}t\nonumber\\
	&=\sup_{s\geq1}\int^{s+1}_{s}\int_{\mathbb R}\left\|e^{(t-\tau)(\Delta-I)}\left(e^{\tau(\Delta-I)}u_0-u_0\right)\right\|_{L^2}^2\varphi_\ell(\tau){\rm d}\tau{\rm d}t\nonumber\\
	&\leq\int_{\mathbb R}\left\|e^{\tau(\Delta-I)}u_0-u_0\right\|_{L^2}^2\varphi_\ell(\tau){\rm d}\tau\longrightarrow0~~~as~~~\ell\rightarrow0~.
\end{align*}
Thus $\mathbb T_\ell\big(z^{in}\big)\longrightarrow0$.

\par Now we consider  $\mathbb S_\ell\big(z;-1\big)$ and we also seperate into $z^{in}$ and $W_{con}$ part.
First we have for all $t>0$ and $y\in\mathbb R^d$ that
\begin{align}
	\mathbb E\big\|W_{con}(t)-\tau_{-y}W_{con}(t)\big\|_{L^2}^2
	&=\mathbb E\big\|\big(I-\tau_{-y}\big)W_{con}(t)\big\|_{L^2}^2\nonumber\\
	&=\int^{t}_{0}\big\|\big(I-\tau_{-y}\big)\circ \mathbf{P}e^{(t-r)(\Delta-I)}G\big\|_{L_2}^2{\rm d}r\nonumber\\
	&\leq\big\|\big(I-\tau_{-y}\big)\circ G\big\|_{L_2}^2\cdot\int^{t}_{0}e^{-2(t-r)}{\rm d}r\nonumber\\
	&\leq\frac{1}{2}\big\|\big(I-\tau_{-y}\big)\circ G\big\|_{L_2}^2~.\nonumber
\end{align}
Moreover, we have
$$\left\|\varphi_\ell*z^{in}-z^{in}\right\|_{L^2}\lesssim\left\|\varphi_\ell*u_0-u_0\right\|_{L^2}.$$
Then, by Fubini's Theorem and the dominated convergence theorem, we have
\begin{align}
	\mathbb S_\ell\big(z;-1\big)
	&\lesssim\int_{\mathbb R^d}\big\|\big(I-\tau_{-y}\big)\circ G\big\|_{L_2}^2\phi_\ell(y){\rm d}y+\|\varphi_\ell*u_0-u_0\|_{L^2}\\
	&=\int_{\mathbb R^d}\big\|\big(I-\tau_{-\ell y}\big)\circ G\big\|_{L_2}^2\phi(y){\rm d}y+\left\|\varphi_\ell*u_0-u_0\right\|_{L^2}\longrightarrow0~~as~~\ell\rightarrow0~,\nonumber
\end{align}
since for each $y\in\mathbb R^d$,
\begin{align}
	\big\|\big(I-\tau_{-\ell y}\big)\circ G\big\|_{L_2}^2
	&=\sum_{n\in\mathbb N}\left\|Ge_n-\tau_{-\ell y}Ge_n\right\|_{L^2}^2\longrightarrow0~~as~~\ell\rightarrow0,\nonumber\\
	\big\|\big(I-\tau_{-\ell y}\big)\circ G\big\|_{L_2}^2
	&\leq4\big\|G\big\|_{L_2}^2~~\forall\ell>0~.\nonumber
\end{align}
This completes the proof.
\end{proof}\

\subsection{Proof of  Proposition \ref{prop. main iteration}}\label{subsec. iterated estimates}
\indent\par To conclude the proof of Proposition \ref{prop. main iteration} we shall verify \eqref{smallness of Rey.}-\eqref{weak smallness of pert.}. In the following we use $C_q$ to denote deterministic constant that may depend on $\lambda_1,\cdots,\lambda_q$ , $\varphi$, $\phi$, $\chi_1,\cdots,\chi_{q+1}$ and $\Gamma_k$. Note that $C_q$ is independent of $\lambda_{q+1}$ and the end point $s\geq0$ of time intervals $[s,s+1]$. In the following estimates, $C_q$ may change from line to line.

\par First, we recall the following result proved in \cite[Lemma 2.1]{MS18} (see also \cite[Lemma B.1]{CL22}).
\begin{lemma}[\bf Improved H\"older's Inequality on $\mathbb T^d$] Let $1\leq p\leq\infty$ and $a,f\in C^\infty\left(\mathbb T^d\right)$. Then for any $\sigma\in\mathbb N$,
\begin{align} \label{improved Holder's inequality}
	\Big|~\left\|af\left(\sigma\cdot\right)\right\|_{L^p}-\left\|a\right\|_{L^p}\left\|f\right\|_{L^p}\Big|\lesssim\sigma^{-1/p}\left\|a\right\|_{1}\left\|f\right\|_{L^p}~.
\end{align}	
\end{lemma}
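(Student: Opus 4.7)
The plan is to exploit the disparate scales between $a$, which varies on scale $1$, and $f(\sigma\cdot)$, which varies on scale $\sigma^{-1}$, by partitioning $\mathbb{T}^d$ into $\sigma^d$ subcubes on which $a$ is essentially constant. The essential case is $1\leq p<\infty$; the case $p=\infty$ will follow trivially from $\|af(\sigma\cdot)\|_\infty\leq\|a\|_\infty\|f\|_\infty\leq\|a\|_1\|f\|_\infty$, noting that $\sigma^{-1/\infty}=1$.

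First, I would tile $\mathbb{T}^d$ with the cubes $Q_j:=\sigma^{-1}(j+[0,1)^d)$, $j\in\{0,1,\dots,\sigma-1\}^d$, each of side length $\sigma^{-1}$, and fix an arbitrary $x_j\in Q_j$. By the mean value theorem, $|a(x)-a(x_j)|\leq\sigma^{-1}\|\nabla a\|_{L^\infty}\leq\sigma^{-1}\|a\|_1$ for all $x\in Q_j$. Since $t\mapsto t^p$ is locally Lipschitz with constant $p\|a\|_\infty^{p-1}\leq p\|a\|_1^{p-1}$ on $[0,\|a\|_\infty]$, this upgrades to
\begin{equation*}
\bigl||a(x)|^p-|a(x_j)|^p\bigr|\lesssim \sigma^{-1}\|a\|_1^{p}\quad\text{for all } x\in Q_j.
\end{equation*}

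Next, I would compute cube-by-cube using the change of variables $y=\sigma x$ and the $\mathbb T^d$-periodicity of $f(\sigma\cdot)$ restricted to each $Q_j$:
\begin{equation*}
\int_{Q_j}|f(\sigma x)|^p\,dx=\sigma^{-d}\int_{\mathbb T^d}|f(y)|^p\,dy=\sigma^{-d}\|f\|_{L^p}^p.
\end{equation*}
Combining with the pointwise estimate above and summing over $j$ gives
\begin{equation*}
\Bigl|\|af(\sigma\cdot)\|_{L^p}^p-\Bigl(\sum_j|a(x_j)|^p\sigma^{-d}\Bigr)\|f\|_{L^p}^p\Bigr|\lesssim \sigma^{-1}\|a\|_1^{p}\|f\|_{L^p}^p.
\end{equation*}
On the other hand, $\sum_j|a(x_j)|^p\sigma^{-d}$ is a Riemann sum for $\|a\|_{L^p}^p$ whose error is controlled by the modulus of continuity of $|a|^p$ times the mesh size $\sigma^{-1}$, so
\begin{equation*}
\Bigl|\sum_j|a(x_j)|^p\sigma^{-d}-\|a\|_{L^p}^p\Bigr|\lesssim \sigma^{-1}\|a\|_1^{p}.
\end{equation*}
Adding the two displays yields $\bigl|\|af(\sigma\cdot)\|_{L^p}^p-\|a\|_{L^p}^p\|f\|_{L^p}^p\bigr|\lesssim\sigma^{-1}\|a\|_1^p\|f\|_{L^p}^p$.

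Finally, to convert this $L^p$-to-the-$p$ estimate into an estimate on $L^p$-norms themselves, I would invoke the elementary inequality $|A-B|\leq|A^p-B^p|^{1/p}$ for $A,B\geq 0$ and $p\geq 1$, which follows from $A^p-B^p=p\int_B^A t^{p-1}dt\geq p\int_0^{A-B}t^{p-1}dt=(A-B)^p$ (for $A\geq B\geq 0$). Applied to $A=\|af(\sigma\cdot)\|_{L^p}$ and $B=\|a\|_{L^p}\|f\|_{L^p}$, this gives exactly the claimed $\sigma^{-1/p}$ decay. The main technical step to be careful with is the Riemann-sum estimate and the power-to-norm extraction; both are elementary, so there is no serious obstacle, the real content being purely the scale separation between $a$ and $f(\sigma\cdot)$.
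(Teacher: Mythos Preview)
Your proof is correct. The paper does not actually prove this lemma itself; it simply recalls the result from \cite[Lemma~2.1]{MS18} (see also \cite[Lemma~B.1]{CL22}), and your argument---partitioning $\mathbb T^d$ into the $\sigma^d$ subcubes on which $f(\sigma\cdot)$ sees a full period, freezing $a$ on each subcube, controlling the Riemann-sum error by the $C^1$-norm of $a$, and then extracting the $p$-th root---is exactly the standard proof given in those references.
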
\
\par This result is applied to bound $\omega^{(p)}_{q+1}$ in $L^2$. By taking $m=0$ and $m=1$ respectively in Lemma \ref{lem. subtle estimates for a_k}, we have for all $s\geq0$ and $t\in[s,s+1]$ that

\begin{align} \label{subtle estimates for a_k}
	\left\|a_k^{(q+1)}(t)\right\|_{N}
	\leq C_{N,q}\cdot\big|g_{\kappa}\left(\varsigma_{q+1}t\right)\big|\cdot\ell_{q+1}^{-N-\big(N+1/2\big)(d+1)}\left(1+\left\|\mathring{R}^{(q)}\right\|_{C_{[s-1,s+1]}L^1}^{N+3/2}\right),
\end{align}
\begin{align} \label{subtle estimate for partial_t a_k}
    \left\|\partial_t a_k^{(q+1)}(t)\right\|_{N}
	&\leq C_{N,q}\Big(\varsigma_{q+1}\big|g'_{\kappa}\left(\varsigma_{q+1}t\right)\big|+\big|g_{\kappa}\left(\varsigma_{q+1}t\right)\big|\ell_{q+1}^{-1}\Big)\ell_{q+1}^{-N-\big(N+3/2\big)(d+1)}\left(1+\left\|\mathring{R}^{(q)}\right\|_{C_{[s-1,s+1]}L^1}^{N+5/2}\right),
\end{align}
Here the deterministic constant $C_{N,q}>0$ is independent of the choice of $\lambda_{q+1}$ and of $s$.\\

\subsubsection{Inductive Estimate of $v^{(q+1)}$}\

\noindent$\bullet$ {\bf $\bar L^2\big(\Omega,L^2_sL^2\big)$-Estimate :}\

By (\ref{definition of principal pertutbation}),  (\ref{estimates for Mikado flows W}), (\ref{subtle estimates for a_k}) and (\ref{improved Holder's inequality}), we have for all $t\in[s,s+1]$ ($s\geq0$) that
\begin{align} \label{t-wise L^2 estimate for w^(p)}
	\left\|\omega^{(p)}_{q+1}(t)\right\|_{L^2}
	&\lesssim\sum_{k\in\Lambda}\bigg(\left\|a_k^{(q+1)}(t)\right\|_{L^2}\left\|\mathbb W^{(q+1)}_k\right\|_{L^2}+\sigma_{q+1}^{-1/2}\left\|a_k^{(q+1)}(t)\right\|_{1}\left\|\mathbb W^{(q+1)}_k\right\|_{L^2}\bigg)\nonumber\\
	&\lesssim\big|g_{\kappa}\left(\varsigma_{q+1}t\right)\big|\bigg[~\Big\|\varrho_{q+1}(t)\Big\|_{L^1}^{1/2}+C_q\cdot\sigma_{q+1}^{-1/2}\ell_{q+1}^{-\frac{3d+5}{2}}\left(1+\left\|\mathring{R}^{(q)}\right\|_{C_{[s-1,s+1]}L^1}^{5/2}\right)\bigg]~.
\end{align}
Notice that
\begin{align}
	\Big\|\varrho_{q+1}(t)\Big\|_{L^1}^{1/2}
	&=2\left(\int_{\mathbb T^d}\chi_{q+1}\left(\mathring{R}^{(q)}_{\ell}(t,x)\right){\rm d}x\right)^{1/2}\nonumber\\
	&\leq C_q\left(1+\left\|\mathring{R}^{(q)}\right\|_{C_{[s-1,s+1]}L^1}^{1/2}\right)~,~~~~~~\forall t\in[s,s+1]\nonumber
\end{align}
and
\begin{align}\label{C^1-norm of ||varrho||_L^1}
	\left\|\Big\|\varrho_{q+1}\Big\|_{L^1_x}^{1/2}\right\|_{C^1[s,s+1]}\leq C_q\cdot\ell_{q+1}^{-1}\left(1+\left\|\mathring{R}^{(q)}\right\|_{C_{[s-1,s+1]}L^1}\right)~.
\end{align}
Taking $L^2[s,s+1]$-norm in (\ref{t-wise L^2 estimate for w^(p)}), and using (\ref{improved Holder's inequality}) again, and by (\ref{L^2 norm for g_kappa}), (\ref{C^1-norm of ||varrho||_L^1}) {and mollification estimate}, we have
\begin{align}\label{only for explanation}
	\left\|\omega^{(p)}_{q+1}\right\|_{L^2_{s}L^2}\lesssim
	&\left\|\mathring{R}^{(q)}\right\|_{L^1_{[s-\ell,s+1]}L^1}^{1/2}+2^{-q}+C_q\left(\varsigma_{q+1}^{-\frac{1}{2}}\ell_{q+1}^{-1}+\sigma_{q+1}^{-\frac{1}{2}}\ell_{q+1}^{-\frac{3d+5}{2}}\right)\left(1+\left\|\mathring{R}^{(q)}\right\|_{C_{[s-1,s+1]}L^1}^{5/2}\right).
\end{align}
Moreover, we obtain
\begin{align}\label{remove bad molli. influence}
	\sup_{s\geq0}\mathbb E\left\|\mathring{R}^{(q)}\right\|_{L^1_{[s-\ell,s+1]}L^1}
	&\leq\sup_{s\geq0}\mathbb E\left\|\mathring{R}^{(q)}\right\|_{L^1_{[s-\ell,s]}L^1}+\left\|\mathring{R}^{(q)}\right\|_{\bar L^1(\Omega,L^1_sL^1)}\nonumber\\
	&\leq\ell_{q+1}\cdot\sup_{s\geq0}\mathbb E\left\|\mathring{R}^{(q)}\right\|_{C_{s}L^1}+\left\|\mathring{R}^{(q)}\right\|_{\bar L^1(\Omega,L^1_sL^1)}~.
\end{align}
Taking the second moment and then supremum for $s\geq0$ {in (\ref{only for explanation})}, by {(\ref{remove bad molli. influence}),} (\ref{finiteness of all moments of v^q and R^q}) and (\ref{setting of iteration parameters}), we have
\begin{align}\label{inductive L^2 estimate for w^p}
	\left\|\omega^{(p)}_{q+1}\right\|_{\bar L^2(\Omega,L^2_sL^2)}
	&\lesssim\left\|\mathring{R}^{(q)}\right\|_{\bar L^1(\Omega,L^1_sL^1)}^{1/2}+2^{-q}+C_q\left(\varsigma_{q+1}^{-\frac{1}{2}}\ell_{q+1}^{-1}+\sigma_{q+1}^{-\frac{1}{2}}\ell_{q+1}^{-\frac{3d+5}{2}}{+\ell_{q+1}^{\frac{1}{2}}}\right)\left(1+\sup_{s\geq0}\mathbb E\left\|\mathring{R}^{(q)}\right\|_{C_{s}L^1}^{5}\right)^{\frac{1}{2}}\nonumber\\
	&\lesssim\left\|\mathring{R}^{(q)}\right\|_{\bar L^1(\Omega,L^1_sL^1)}^{1/2}+2^{-q}+\mathfrak C_q\left(\lambda_{q+1}^{-\left(\frac{d}{2}+2\right)\vartheta}+\lambda_{q+1}^{-\left(\frac{1}{4\vartheta}-\frac{3d+5}{2}\vartheta\right)}{+\lambda_{q+1}^{-\vartheta/2}}\right)\nonumber\\
	&\lesssim\left\|\mathring{R}^{(q)}\right\|_{\bar L^1(\Omega,L^1_sL^1)}^{1/2}+2^{-q}+\mathfrak C_q\cdot\lambda_{q+1}^{-{\vartheta/2}}~.
	\end{align}
Here we use the first constraint of (\ref{setting of vartheta}) to deduce that
\begin{align}
	\frac{1}{4\vartheta}-\frac{3d+5}{2}\vartheta>\frac{4d+7}{2}\vartheta-\frac{3d+5}{2}\vartheta>\vartheta~.\nonumber
\end{align}
Then by (\ref{definition of corrector perturbation}), (\ref{estimates for Mikado flows V}) and (\ref{subtle estimates for a_k}), we  have for all $t\in[s,s+1]$ ($s\geq0$) that
\begin{align}\label{inductive spacial-L^infinity-estimate for w^c}
	\left\|\omega^{(c)}_{q+1}(t)\right\|_{L^\infty}
	&\lesssim\sigma_{q+1}^{-1}\cdot\sum_{k\in\Lambda}\left\|\nabla a_k^{(q+1)}(t)\right\|_{0}\left\|\mathbb V^{(q+1)}_k\right\|_{L^\infty}\nonumber\\
	&\leq C_q\cdot\left|g_{\kappa_{q+1}}\left(\varsigma_{q+1}t\right)\right|\cdot\sigma_{q+1}^{-1}\mu_{q+1}^{-1+\frac{d-1}{2}}\ell_{q+1}^{-\frac{3d+5}{2}}\left(1+\left\|\mathring{R}^{(q)}\right\|_{C_{[s-1,s+1]}L^1}^{5/2}\right)~.
\end{align}
Taking $L^2[s,s+1]$-norm in time, then the second moment and finally supremum for $s\geq0$, by (\ref{L^2 norm for g_kappa}), (\ref{finiteness of all moments of v^q and R^q}) and (\ref{setting of iteration parameters}), we have
\begin{align}\label{inductive L^2 estimate for w^c}
	\left\|\omega^{(c)}_{q+1}\right\|_{\bar L^2(\Omega,L^2_sL^2)}
	&\lesssim\left\|\omega^{(c)}_{q+1}\right\|_{\bar L^2(\Omega,L^2_sL^\infty)}\nonumber\\
	&\leq C_q\cdot\sigma_{q+1}^{-1}\mu_{q+1}^{-1+\frac{d-1}{2}}\ell_{q+1}^{-\frac{3d+5}{2}}\left(1+\sup_{s\geq0}\mathbb E\left\|\mathring{R}^{(q)}\right\|_{C_{s}L^1}^{3}\right)^{1/2}\nonumber\\
	&\leq C_q\cdot\lambda_{q+1}^{-\left(\frac{1}{2\vartheta}-\frac{3d+5}{2}\vartheta-\frac{d-3}{2}\right)}\nonumber\\
	&\leq C_q\cdot\lambda_{q+1}^{-\vartheta}~.
\end{align}
Here we use the first constraint of (\ref{setting of vartheta}) to deduce that
\begin{align}
	\frac{1}{2\vartheta}-\frac{3d+5}{2}\vartheta-\frac{d-3}{2}>\frac{5d+9}{2}\vartheta+1>\vartheta~.\nonumber
\end{align}
For $\omega^{(t)}_{q+1}$ we estimate {$W^{1,a}$ bound with $a>1$} for later use. By (\ref{definition of temporal perturbation}), (\ref{uniform bounds for h_kappa}) and using $L^a$-boundedness of the Helmholtz projection, mollification estimates and the Sobolev embedding $W^{2+\left(1-\frac{1}{a}\right)d,1}\hookrightarrow W^{2,a}$, we obtain for all $t\in[s,s+1]$ ($s\geq0$) that
\begin{align}\label{inductive spacial H^1,infinity estimate for w^t}
	\left\|\omega^{(t)}_{q+1}(t)\right\|_{W^{1,a}}
	&\lesssim\big|h_{\kappa}\left(\varsigma_{q+1}t\right)\big|\cdot\varsigma_{q+1}^{-1}\left\|\mathring{R}^{(q)}_{\ell}(t)\right\|_{W^{2,a}}\nonumber\\
	&\lesssim\varsigma_{q+1}^{-1}\left\|\mathring{R}^{(q)}_{\ell}(t)\right\|_{W^{2+\left(1-\frac{1}{a}\right)d,1}}\nonumber\\
	&\lesssim\varsigma_{q+1}^{-1}\ell_{q+1}^{-2-\left(1-\frac{1}{a}\right)d}\left\|\mathring{R}^{(q)}\right\|_{C_{[s-1,s+1]}L^1}\nonumber\\
	&\lesssim\varsigma_{q+1}^{-1}\ell_{q+1}^{-(d+2)}\left\|\mathring{R}^{(q)}\right\|_{C_{[s-1,s+1]}L^1}.
\end{align}
Then taking $L^2$-norm in time and in probability, and finally supremum for $s\geq0$, by (\ref{L^2 norm for g_kappa}), (\ref{finiteness of all moments of v^q and R^q}) and (\ref{setting of iteration parameters}), we have
\begin{align} \label{inductive L^2 estimate for w^t}
	\left\|\omega^{(t)}_{q+1}\right\|_{\bar L^2(\Omega,L^2_sL^2)}
	&\lesssim\left\|\omega^{(t)}_{q+1}\right\|_{\bar L^2(\Omega,L^2_sW^{1,2})}\nonumber\\
	&\lesssim\varsigma_{q+1}^{-1}\ell_{q+1}^{-(d+2)}\cdot\left(\sup_{s\geq0}\mathbb E\left\|\mathring{R}^{(q)}\right\|_{C_{s}L^1}^2\right)^{1/2}\nonumber\\
	&\leq C_q\cdot\lambda_{q+1}^{-\vartheta}~.
\end{align}
Hence, by (\ref{construction of v^(q+1)}), (\ref{def. cutoff Theta}), (\ref{inductive L^2 estimate for w^p}), (\ref{inductive L^2 estimate for w^c}), (\ref{inductive L^2 estimate for w^t}) and Proposition \ref{prop. uniform convergence of mollified z}, we have
\begin{align}\label{inductive L^2L^2 estimate for total perturbation}
	\big\|\varpi_{q+1}\big\|_{\bar L^2(\Omega,L^2_sL^2)}
	&\leq\left\|v^{(q)}_{\ell_{q+1}}-v^{(q)}\right\|_{\bar L^2(\Omega,L^2_sL^2)}+\left\|\omega^{(p)}_{q+1}\right\|_{\bar L^2(\Omega,L^2_sL^2)}+\left\|\omega^{(c)}_{q+1}\right\|_{\bar L^2(\Omega,L^2_sL^2)}+\left\|\omega^{(t)}_{q+1}\right\|_{\bar L^2(\Omega,L^2_sL^2)}\nonumber\\
	&\lesssim\left\|\mathring{R}^{(q)}\right\|_{\bar L^1(\Omega,L^1_sL^1)}^{1/2}+2^{-q}+ C_q\cdot\lambda_{q+1}^{-\vartheta/2}~.
\end{align}
Choose $\lambda_{q+1}$  sufficiently large and we get (\ref{L2 smallness of pert.}).

As the next step, we shall verify \eqref{weak smallness of pert.}. To this end, we estimate each norms in the definition of $Z^{p,r}_s$ and $\mathbf E_p$.

\noindent$\bullet$ {\bf $\bar L^\alpha\big(\Omega,C_sW^{-1,1}\big)$-Estimate :}\

	\par By (\ref{div-representation of w^p+w^c}), (\ref{estimates for Mikado flows V}), (\ref{subtle estimates for a_k}) and (\ref{definition of g_kappa}), we have for all $t\in[s,s+1]$ ($s\geq0$) that
	\begin{align}
		\left\|\omega_{q+1}^{(p)}(t)+\omega_{q+1}^{(c)}(t)\right\|_{W^{-1,1}}
		&\leq\sigma_{q+1}^{-1}\cdot\left\|\sum_{k\in\Lambda}a_k^{(q+1)}(t)\mathbb V_k^{(q+1)}\left(\sigma_{q+1}\cdot\right)\right\|_{L^m}\nonumber\\
		&\leq\sigma_{q+1}^{-1}\cdot\sum_{k\in\Lambda}\left\|a_k^{(q+1)}(t)\right\|_0\left\|\mathbb V_k^{(q+1)}\right\|_{L^m}\nonumber\\
		&\leq C_q\cdot\kappa_{q+1}^{1/2}\sigma_{q+1}^{-1}\mu_{q+1}^{-1+\frac{d-1}{2}-\frac{d-1}{m}}\ell_{q+1}^{-\frac{d+1}{2}}\left(1+\left\|\mathring{R}^{(q)}\right\|_{C_{[s-1,s+1]}L^1}^{3/2}\right).\nonumber
	\end{align}
Here we take $m>1$ and close to $1$.
	Taking supremum norm on $[s,s+1]$, then $\alpha$-th moment, and finally supremum for $s\geq0$, by (\ref{finiteness of all moments of v^q and R^q}) and (\ref{setting of iteration parameters}), we have
	\begin{align}\label{inductive CH^-2 estimate for w^p+w^c}
		\left\|\omega_{q+1}^{(p)}+\omega_{q+1}^{(c)}\right\|_{\bar L^\alpha(\Omega,C_sW^{-1,1})}
		&\leq C_q\cdot\kappa_{q+1}^{1/2}\sigma_{q+1}^{-1}\mu_{q+1}^{-1+\frac{d-1}{2}-\frac{d-1}{m}}\ell_{q+1}^{-\frac{d+1}{2}}\left(1+\sup_{s\geq0}\mathbb E\left\|\mathring{R}^{(q)}\right\|_{C_{s}L^1}^{\alpha/2}\right)^{1/\alpha}\nonumber\\
		&\leq C_q\cdot\lambda_{q+1}^{-(3d+8)\vartheta}\nonumber\\
		&\leq C_q\cdot\lambda_{q+1}^{-\vartheta}~.
	\end{align}
For $\omega_{q+1}^{(t)}$ by (\ref{definition of temporal perturbation}), (\ref{uniform bounds for h_kappa}) and the Sobolev embedding $W^{d/2,1}\hookrightarrow L^2$, we have for all $t\in[s,s+1]$ ($s\geq0$) that
	\begin{align}
		\left\|\omega_{q+1}^{(t)}(t)\right\|_{H^{-1}}&\leq\varsigma_{q+1}^{-1}\left|h_{\kappa_{q+1}}\left(\varsigma_{q+1}t\right)\right|\left\|\mathring{R}^{(q)}_{\ell}(t)\right\|_{L^2}\nonumber\\
		 &\lesssim\varsigma_{q+1}^{-1}\left\|\mathring{R}^{(q)}_{\ell}(t)\right\|_{W^{d/2,1}}\nonumber\\
		 &\lesssim\varsigma_{q+1}^{-1}\ell_{q+1}^{-d/2}\left\|\mathring{R}^{(q)}\right\|_{C_{[s-1,s+1]}L^1}~.\nonumber
	\end{align}
	Taking supremum norm on $[s,s+1]$ and then $\alpha$-th moment, and finally supremum for $s\geq0$, by (\ref{finiteness of all moments of v^q and R^q}) and (\ref{setting of iteration parameters}), we have
	\begin{align}\label{inductive CH^-2 estimate for w^t}
		\left\|\omega_{q+1}^{(t)}\right\|_{\bar L^\alpha(\Omega,C_sH^{-1})}
		&\lesssim\varsigma_{q+1}^{-1}\ell_{q+1}^{-d/2}\left(\sup_{s\geq0}\mathbb E\left\|\mathring{R}^{(q)}\right\|_{C_{s}L^1}^\alpha\right)^{1/\alpha}\nonumber\\
		&\leq C_q\cdot\lambda_{q+1}^{-\vartheta}~.
	\end{align}

Hence by {(\ref{def. cutoff Theta}),} (\ref{inductive CH^-2 estimate for w^p+w^c}), (\ref{inductive CH^-2 estimate for w^t}) and Proposition \ref{prop. uniform convergence of mollified z}, we have
\begin{align}\label{inductive CH^-1 estimate for total perturbation}
	\big\|\varpi_{q+1}\big\|_{\bar L^\alpha(\Omega,C_sW^{-1,1})}
	&\lesssim\left\|v^{(q)}_{\ell_{q+1}}-v^{(q)}\right\|_{\bar L^\alpha(\Omega,C_sL^\infty)}+\left\|\omega_{q+1}^{(p)}+\omega_{q+1}^{(c)}\right\|_{\bar L^\alpha(\Omega,C_sW^{-1,1})}+\left\|\omega_{q+1}^{(t)}\right\|_{\bar L^\alpha(\Omega,C_sH^{-1})}\nonumber\\
	&\leq C_q\cdot\lambda_{q+1}^{-\vartheta}~.
\end{align}\

\noindent$\bullet$ {\bf $\mathbf E_p$- and $\bar L^\alpha\big(\Omega,L^p_sL^\infty\big)$- Estimate :}\
\par The estimates of these two norms are similar and we only give the $\mathbf E_p$-estimate here.
	\par By (\ref{definition of principal pertutbation}), (\ref{estimates for Mikado flows W}) and (\ref{subtle estimates for a_k}), we have for all $t\in[s,s+1]$ ($s\geq0$) that
	\begin{align}\label{inductive spacial L^infinity estimate for w^p}
		\left\|\omega^{(p)}_{q+1}(t)\right\|_{L^\infty}
		&\leq\sum_{k\in\Lambda}\left\|a_k^{(q+1)}(t)\right\|_{0}\left\|\mathbb W^{(q+1)}_k\right\|_{L^\infty}\nonumber\\
	&\leq C_q\cdot\left|g_{\kappa_{q+1}}\left(\varsigma_{q+1}t\right)\right|\cdot\mu_{q+1}^{\frac{d-1}{2}}\ell_{q+1}^{-\frac{d+1}{2}}\left(1+\left\|\mathring{R}^{(q)}\right\|_{C_{[s-1,s+1]}L^1}^{3/2}\right)~.
	\end{align}
	Taking second moment, then $L^p[s,s+1]$-norm in time and finally supremum for $s\geq0$, by (\ref{L^alpha estimates for g_kappa}), (\ref{finiteness of all moments of v^q and R^q}), (\ref{setting of iteration parameters}), we have
	\begin{align}\label{inductive L^pL^2L^infinity estimate for w^p}
		\left\|\omega^{(p)}_{q+1}\right\|_{\mathbf E_p}
		&\leq C_q\cdot\kappa_{q+1}^{\frac{1}{2}-\frac{1}{p}}\mu_{q+1}^{\frac{d-1}{2}}\ell_{q+1}^{-\frac{d+1}{2}}\left(1+\sup_{s\geq0}\mathbb E\left\|\mathring{R}^{(q)}\right\|^{3}_{C_{s}L^1}\right)^{1/2}\nonumber\\
		&\leq C_q\cdot\lambda_{q+1}^{-\left[\left(\frac{1}{p}-\frac{1}{2}\right)\left(1+d-(5d+17)\vartheta+\frac{1}{\vartheta}\right)-\frac{d+1}{2}\vartheta -\frac{d-1}{2}\right]}\nonumber\\
		&\leq C_q\cdot\lambda_{q+1}^{-\vartheta}~.
	\end{align}
	Here, we use the fact that $1\leq p<2$, and the second and the third constraint of (\ref{setting of vartheta}) to deduce that
	\begin{align}
		\left(\frac{1}{p}-\frac{1}{2}\right)\left(1+d-(5d+17)\vartheta+\frac{1}{\vartheta}\right)-\frac{d+1}{2}\vartheta -\frac{d-1}{2}\geq\vartheta~.\nonumber
	\end{align}
	For $\omega^{(c)}_{q+1}$ and $\omega^{(t)}_{q+1}$, we use  (\ref{inductive spacial-L^infinity-estimate for w^c}) and (\ref{inductive spacial H^1,infinity estimate for w^t}) with $a=d+1$ and obtain
	\begin{align}\label{inductive L^pL^2L^infinity estimate for w^c}
		\left\|\omega^{(c)}_{q+1}\right\|_{\mathbf E_p}
		&\leq C_q\cdot\kappa_{q+1}^{\frac{1}{2}-\frac{1}{p}}\sigma_{q+1}^{-1}\mu_{q+1}^{-1+\frac{d-1}{2}}\ell_{q+1}^{-\frac{3d+5}{2}}\left(1+\sup_{s\geq0}\mathbb E\left\|\mathring{R}^{(q)}\right\|_{C_{s}L^1}^{5}\right)^{1/2}\nonumber\\
		&\leq C_q\cdot\sigma_{q+1}^{-1}\mu_{q+1}^{-1+\frac{d-1}{2}}\ell_{q+1}^{-\frac{3d+5}{2}}\nonumber\\
		&\leq C_q\cdot\lambda_{q+1}^{-\vartheta}~,\\
	\label{inductive L^pL^2L^infinity estimate for w^t}
		\left\|\omega^{(t)}_{q+1}\right\|_{\mathbf E_p}
		&\lesssim\sup_{s\geq0}\left\|\omega^{(t)}_{q+1}\right\|_{L^p_sL^2\left(\Omega; W^{1,d+1}\right)}\nonumber\\
		&\lesssim\varsigma_{q+1}^{-1}\ell_{q+1}^{-(d+2)}\left(\sup_{s\geq0}\mathbb E\left\|\mathring{R}^{(q)}\right\|_{C_{s}L^1}^2\right)^{1/2}\nonumber\\
		&\leq C_q\cdot\lambda_{q+1}^{-\vartheta}~.
	\end{align}
	
Hence, by {(\ref{def. cutoff Theta}),} (\ref{inductive L^pL^2L^infinity estimate for w^p}), (\ref{inductive L^pL^2L^infinity estimate for w^c}), (\ref{inductive L^pL^2L^infinity estimate for w^t}) and Proposition \ref{prop. uniform convergence of mollified z}, we have
	\begin{align}\label{inductive L^pL^2L^infinity estimate for total perturbation}
	\big\|\varpi_{q+1}\big\|_{\mathbf E_p}
	&\leq\left\|v^{(q)}_{\ell_{q+1}}-v^{(q)}\right\|_{\mathbf E_p}+\left\|\omega^{(p)}_{q+1}\right\|_{\mathbf E_p}+\left\|\omega^{(c)}_{q+1}\right\|_{\mathbf E_p}+\left\|\omega^{(t)}_{q+1}\right\|_{\mathbf E_p}\nonumber\\
	&\leq C_q\cdot\lambda_{q+1}^{-\vartheta}~.
\end{align}\

\noindent$\bullet$ {\bf $\bar L^\alpha\big(\Omega,L^1_sW^{1,r}\big)$-Estimate :}

By (\ref{definition of principal pertutbation}), (\ref{estimates for Mikado flows W}) and (\ref{subtle estimates for a_k}), we have for all $t\in[s,s+1]$ ($s\geq0$) that
\begin{align}
	\left\|\omega^{(p)}_{q+1}(t)\right\|_{W^{1,r}}
	&\lesssim\sum_{k\in\Lambda}\left\|a_k^{(q+1)}(t)\right\|_{1}\left\|\mathbb W^{(q+1)}_k\left(\sigma_{q+1}\cdot\right)\right\|_{W^{1,r}}\nonumber\\
	&\leq C_q\cdot\left|g_{\kappa_{q+1}}\left(\varsigma_{q+1}t\right)\right|\cdot\sigma_{q+1}\mu_{q+1}^{1+\frac{d-1}{2}-\frac{d-1}{r}}\ell_{q+1}^{-\frac{3d+5}{2}}\left(1+\left\|\mathring{R}^{(q)}\right\|_{C_{[s-1,s+1]}L^1}^{5/2}\right).\nonumber
\end{align}
Taking $L^1[s,s+1]$-norm in time, then $\alpha$-th moment, and finally supremum for $s\geq0$, by (\ref{L^alpha estimates for g_kappa}), (\ref{setting of iteration parameters}) and (\ref{finiteness of all moments of v^q and R^q}), we have
\begin{align}\label{inductive L^1H^1,a estimate for w^p}
	\left\|\omega^{(p)}_{q+1}\right\|_{\bar L^\alpha(\Omega,L^1_sW^{1,r})}
	&\leq C_q\cdot\kappa_{q+1}^{-\frac{1}{2}}\sigma_{q+1}\mu_{q+1}^{1+\frac{d-1}{2}-\frac{d-1}{r}}\ell_{q+1}^{-\frac{3d+5}{2}}\left(1+\sup_{s\geq0}\mathbb E\left\|\mathring{R}^{(q)}\right\|_{C_{s}L^1}^{5\alpha/2}\right)^{1/\alpha}\nonumber\\
	&\leq C_q\cdot\lambda_{q+1}^{-\left(-(4d+11)\vartheta+\frac{d-1}{r}\right)}\nonumber\\
	&\leq C_q\cdot\lambda_{q+1}^{-\vartheta}~.
\end{align}
Here we use the last constraint of (\ref{setting of vartheta}) to deduce that
\begin{align}
	-(4d+11)\vartheta+\frac{d-1}{r}\geq\vartheta~.\nonumber
\end{align}
Then by (\ref{definition of corrector perturbation}), (\ref{estimates for Mikado flows V}) and (\ref{subtle estimates for a_k}), we have for all $t\in[s,s+1]$ ($s\geq0$) that
\begin{align}
	\left\|\omega^{(c)}_{q+1}(t)\right\|_{W^{1,r}}
	&\lesssim\sigma_{q+1}^{-1}\sum_{k\in\Lambda}\left\|\nabla a_k^{(q+1)}(t)\right\|_{1}\left\|\mathbb V^{(q+1)}_k\left(\sigma_{q+1}\cdot\right)\right\|_{W^{1,r}}\nonumber\\
	&\leq C_q\cdot\left|g_{\kappa_{q+1}}\left(\varsigma_{q+1}t\right)\right|\cdot\mu_{q+1}^{\frac{d-1}{2}-\frac{d-1}{r}}\ell_{q+1}^{-\frac{5d+9}{2}}\left(1+\left\|\mathring{R}^{(q)}\right\|_{C_{[s-1,s+1]}L^1}^{7/2}\right).\nonumber
\end{align}
Taking $L^1[s,s+1]$-norm in time, then $\alpha$-th moment, and finally supremum for $s\geq0$, by (\ref{L^alpha estimates for g_kappa}), (\ref{setting of vartheta}) and (\ref{finiteness of all moments of v^q and R^q}), we have
\begin{align}\label{inductive L^1H^1,a estimate for w^c}
	\left\|\omega^{(c)}_{q+1}\right\|_{\bar L^\alpha(\Omega,L^1_sW^{1,r})}
	&\leq C_q\cdot\kappa_{q+1}^{-\frac{1}{2}}\mu_{q+1}^{\frac{d-1}{2}-\frac{d-1}{r}}\ell_{q+1}^{-\frac{5d+9}{2}}\left(1+\sup_{s\geq0}\mathbb E\left\|\mathring{R}^{(q)}\right\|_{C_{s}L^1}^{7\alpha/2}\right)^{1/\alpha}\nonumber\\
	&\leq C_q\cdot\lambda_{q+1}^{-\left(1+\frac{1}{2\vartheta}-(5d+13)\vartheta+\frac{d-1}{r}\right)}\nonumber\\
	&\leq C_q\cdot\lambda_{q+1}^{-\vartheta}~.
\end{align}
Here we use the last and the first constraint of (\ref{setting of vartheta}) to deduce
\begin{align}
	1+\frac{1}{2\vartheta}-(5d+13)\vartheta+\frac{d-1}{r}\geq1+\frac{1}{2\vartheta}-(d+1)\vartheta>\vartheta~.\nonumber
\end{align}
For $\omega^{(t)}_{q+1}$, we simply use (\ref{inductive spacial H^1,infinity estimate for w^t}) to have
\begin{align}\label{inductive L^1H^1,a estimate for w^t}
	\left\|\omega^{(t)}_{q+1}\right\|_{\bar L^\alpha(\Omega,L^1_sW^{1,r})}
		&\lesssim\varsigma_{q+1}^{-1}\ell_{q+1}^{-(d+2)}\left(\sup_{s\geq0}\mathbb E\left\|\mathring{R}^{(q)}\right\|_{C_{s}L^1}^{\alpha}\right)^{1/{\alpha}}\nonumber\\
		&\leq C_q\cdot\lambda_{q+1}^{-\vartheta}~.
\end{align}
Hence, by {(\ref{def. cutoff Theta}),} (\ref{inductive L^1H^1,a estimate for w^p}), (\ref{inductive L^1H^1,a estimate for w^c}), (\ref{inductive L^1H^1,a estimate for w^t}) and Proposition \ref{prop. uniform convergence of mollified z}, we have
\begin{align}\label{inductive L^1H^1 estimate for total perturbation}
	&~\big\|\varpi_{q+1}\big\|_{\bar L^\alpha(\Omega,L^1_sW^{1,r})}\nonumber\\
	&\leq\left\|v^{(q)}_{\ell_{q+1}}-v^{(q)}\right\|_{\bar L^\alpha(\Omega,L^1_sW^{1,r})}+\left\|\omega^{(p)}_{q+1}\right\|_{\bar L^\alpha(\Omega,L^1_sW^{1,r})}+\left\|\omega^{(c)}_{q+1}\right\|_{\bar L^\alpha(\Omega,L^1_sW^{1,r})}+\left\|\omega^{(t)}_{q+1}\right\|_{\bar L^\alpha(\Omega,L^1_sW^{1,r})}\nonumber\\
	&\leq C_q\cdot\lambda_{q+1}^{-\vartheta}~.
\end{align}

\par Combining (\ref{inductive L^2L^2 estimate for total perturbation}), (\ref{inductive CH^-1 estimate for total perturbation}), (\ref{inductive L^pL^2L^infinity estimate for total perturbation}) and (\ref{inductive L^1H^1 estimate for total perturbation}), and choosing $\lambda_{q+1}$ sufficiently large, we get  (\ref{weak smallness of pert.}).

\subsubsection{Inductive Estimate of $\mathring{R}^{(q+1)}$}\

To conclude the proof of Proposition, we shall verify \eqref{smallness of Rey.}. To this end, we estimate each term in (\ref{construction for R^(q+1)}) separately.\\
$\mathbf{(1)}~$ $\mathring{R}_{com}^\ell$~$\bf and$ $\mathring{R}_{com}^{(q+1)}$
	
\noindent$\bullet~{\bf Estimate~of~}\mathring{R}_{com}^\ell:$

By (\ref{definition of molli-communication Reynold stress}) and  mollification estimate, we  have
	\begin{align}
		\left\|\mathring{R}_{com}^\ell\right\|_{L^1_{s}L^1}
		&\lesssim\left\|v^{(q)}+z\right\|_{L^2_{[s-1,s+1]}L^2}\left(\left\|v^{(q)}_{\ell}-v^{(q)}\right\|_{L^2_{s}L^2}+\left\|z_{\ell}-z\right\|_{L^2_{s}L^2}\right)\nonumber\\
		&~~~+\left\|\left(\big(v^{(q)}+z\big){\mathring\otimes}\big(v^{(q)}+z\big)\right)_{\ell}-\big(v^{(q)}+z\big){\mathring\otimes}\big(v^{(q)}+z\big)\right\|_{L^1_{s}L^1},~~~~s\geq0\nonumber
	\end{align}
{Here, $\Big(\left(v^{(q)}+z\right)\mathring\otimes\left(v^{(q)}+z\right)\Big)_\ell:=\left[\Big(\left(v^{(q)}+z\right)\mathring\otimes\left(v^{(q)}+z\right)\Big)\ast_x\phi_\ell\right]\ast_t\varphi_\ell$ .}
	 Taking expectation, then by H\"older's inequality, (\ref{finiteness of all moments of v^q and R^q}) and Proposition \ref{prop. uniform convergence of mollified z}, we have
	\begin{align}
		\left\|\mathring{R}_{com}^\ell\right\|_{\bar L^1(\Omega,L^1_sL^1)}\longrightarrow0\nonumber
	\end{align}
	as $\ell=\ell_{q+1}\rightarrow0$. Hence, choose $\lambda_{q+1}$  sufficiently large so that $\ell_{q+1}=\lambda_{q+1}^{-\vartheta}$ sufficiently small and we obtain
	\begin{align}
		\left\|\mathring{R}_{com}^\ell\right\|_{\bar L^1(\Omega,L^1_sL^1)}\leq\frac{\delta^2}{4}~.\nonumber
	\end{align}\\
\noindent$\bullet~{\bf Estimate~of~}\mathring{R}_{com}^{(q+1)}:$

By (\ref{definition of R_com}) and  mollification estimate, we have for all $s\geq0$ that
	 \begin{align}
	 	\left\|\mathring{R}_{com}^{(q+1)}\right\|_{L^1_{s}L^1}
	 	&\lesssim\left\|v^{(q+1)}+z_\ell\right\|_{L^2_{s}L^2}\big\|z-z_\ell\big\|_{L^2_{s}L^2}+\big\|z-z_\ell\big\|_{L^2_{s}L^2}^2\nonumber\\
	 	&\lesssim\bigg(\left\|v^{(q)}\right\|_{0,[s-1,s+1]}+\big\|\omega_{q+1}\big\|_{L^2_{s}L^2}+\big\|z\big\|_{L^2_{[s-1,s+1]}L^2}\bigg)\big\|z-z_\ell\big\|_{L^2_{s}L^2}~.\nonumber
	 \end{align}
	 Taking expectation and  supremum for $s\geq0$, then by H\"older's inequality, (\ref{z finiteness of moments}), (\ref{finiteness of all moments of v^q and R^q}) and the previous $\bar L^2\big(\Omega,L^2_sL^2\big)$-estimates for perturbations, we have
	 \begin{align}
	 	\left\|\mathring{R}_{com}^{(q+1)}\right\|_{\bar L^1(\Omega,L^1_sL^1)}
	 	&\lesssim\left(\sup_{s\geq0}\mathbb E\left\|v^{(q)}\right\|_{0,s}^2+ C_q+\sup_{s\geq0}\mathbb E\big\|z\big\|_{L^2_{s}L^2}^2\right)^{1/2}\big\|z-z_{\ell}\big\|_{\bar L^2(\Omega,L^2_sL^2)}\nonumber\\
	 	&\leq C_q\cdot\big\|z-z_{\ell}\big\|_{\bar L^2(\Omega,L^2_sL^2)}\leq \frac{\delta^2}{4}~.\nonumber
	 \end{align}\\
	Here, in the last inequality, {by (\ref{uniform convergence of mollified z})},  we choose $\lambda_{q+1}$ large enough and obtain
	\begin{align}
		 C_q\cdot\big\|z-z_{\ell}\big\|_{\bar L^2(\Omega,L^2_sL^2)}\leq\frac{\delta^2}{4}~.\nonumber
	\end{align}
	\\
	$\mathbf{(2)~Oscillation~Errors}~\mathring R_{far}^{(q+1)}$, $\mathring{R}_{osc,x}^{(q+1)}$ and $\mathring{R}_{osc,t}^{(q+1)}$
	
\noindent$\bullet~{\bf Estimate~of~}\mathring R_{far}^{(q+1)}:$

By (\ref{definition of R_far}), (\ref{def. cutoff Theta}), (\ref{estimates for Mikado flows W0W}) and (\ref{subtle estimates for a_k}), we have for all $t\in{[s,s+1]}$ ($s\geq0$) that
	\begin{align}
		\left\|\mathring{R}_{far}^{(q+1)}(t)\right\|_{L^1}
		&\lesssim\sum_{k\neq k'}\left\|a_k^{(q+1)}(t)\right\|_{0}\left\|a_{k'}^{(q+1)}(t)\right\|_{0}\left\|\mathbb W_k^{(q+1)}\mathring\otimes \mathbb W_{k'}^{(q+1)}\right\|_{L^1}\nonumber\\
		&\leq C_q\cdot\left|g_{\kappa_{q+1}}\left(\varsigma_{q+1}t\right)\right|^2\cdot\mu_{q+1}^{-1}\ell_{q+1}^{-(d+1)}\left(1+\left\|\mathring{R}^{(q)}\right\|^{3}_{C_{[s-1,s+1]}L^1}\right)~.\nonumber
	\end{align}
	Taking $L^1{[s,s+1]}$-norm in time, then expectation and finally supremum for $s\geq0$, by (\ref{L^2 norm for g_kappa}), (\ref{finiteness of all moments of v^q and R^q}) and (\ref{setting of iteration parameters}), we have
	\begin{align}
		\left\|\mathring{R}_{far}^{(q+1)}\right\|_{\bar L^1(\Omega,L^1_sL^1)}
		&\leq C_q\cdot\mu_{q+1}^{-1}\ell_{q+1}^{-(d+1)}\left(1+\sup_{s\geq0}\mathbb E\left\|\mathring{R}^{(q)}\right\|^{3}_{C_{s}L^1}\right)\nonumber\\
		&\leq C_q\cdot\lambda_{q+1}^{-\left(1-(d+1)\vartheta\right)}\nonumber\\
		&\leq C_q\cdot\lambda_{q+1}^{-\vartheta}~.\nonumber
	\end{align}
	Here we have used the fact that $0<\vartheta<\frac{1}{d+2}$ .\\

\noindent$\bullet~{\bf Estimate~of~}\mathring{R}_{osc,x}^{(q+1)}:$

By (\ref{definition of R_osc,x}), (\ref{def. cutoff Theta}), (\ref{L^p estimate for bilinear reverse divergence}), $L^1$-boundedness of the operator $\mathcal R$, (\ref{estimates for Mikado flows W}), {(\ref{peri-L^p estimate of anti-divergence R}) and} (\ref{subtle estimates for a_k}), we have for all $t\in{[s,s+1]}$ ($s\geq0$) that
	 \begin{align}
	 	\left\|\mathring{R}_{osc,x}^{(q+1)}(t)\right\|_{L^1}
	 	&\lesssim\sum_{k\in\Lambda}\left\|\nabla\left|a_k^{(q+1)}(t)\right|^2\right\|_{1}\left\|\mathcal R\left[\mathbb W_k^{(q+1)}\left(\sigma_{q+1}\cdot\right)\mathring\otimes \mathbb W_k^{(q+1)}\left(\sigma_{q+1}\cdot\right)\right]\right\|_{L^1}\nonumber\\
	 	&\lesssim\sigma_{q+1}^{-1}\sum_{k\in\Lambda}\left\|\nabla a_k^{(q+1)}(t)\right\|_{1}\left\|a_k^{(q+1)}(t)\right\|_{1}\left\|\mathbb W_k^{(q+1)}\right\|_{L^{2}}^2\nonumber\\
	 	&\leq C_q\cdot\left|g_{\kappa_{q+1}}\left(\varsigma_{q+1}t\right)\right|^2\cdot\sigma_{q+1}^{-1}\ell_{q+1}^{-(4d+7)}\left(1+\left\|\mathring{R}^{(q)}\right\|_{C_{[s-1,s+1]}L^1}^{6}\right)~.\nonumber
	 \end{align}

	 Taking $L^1{[s,s+1]}$-norm in time, and expectation and finally supremum for $s\geq0$, by the normalized $L^2$-norm of $g_\kappa$, (\ref{finiteness of all moments of v^q and R^q}) and (\ref{setting of iteration parameters}), we have
	  \begin{align}
	  	\left\|\mathring{R}_{osc,x}^{(q+1)}\right\|_{\bar L^1(\Omega,L^1_sL^1)}
	  	&\leq C_q\cdot\sigma_{q+1}^{-1}\ell_{q+1}^{-(4d+7)}\left(1+\sup_{s\geq0}\mathbb E\left\|\mathring{R}^{(q)}\right\|_{C_{s}L^1}^{6}\right)\nonumber\\
	  	&\leq C_q\cdot\lambda_{q+1}^{-\left(\frac{1}{2\vartheta}-(4d+7)\vartheta\right)}\nonumber\\
		&\leq C_q\cdot\lambda_{q+1}^{-\vartheta}~.\nonumber
	  \end{align}
	  Here we use the first constraint of (\ref{setting of vartheta}) to deduce that
	  \begin{align}
	  	\frac{1}{2\vartheta}-(4d+7)\vartheta\geq\frac{d-1}{2}>\vartheta~.\nonumber	
	 \end{align}\\
\noindent$\bullet~{\bf Estimate~of~}\mathring{R}_{osc,t}^{(q+1)}:$

By (\ref{definition for R_osc,t}), {(\ref{def. cutoff Theta}),} (\ref{uniform bounds for h_kappa}) and  mollification estimate, we have for all $t\in{[s,s+1]}$ ($s\geq0$) that
	 \begin{align}
	 	\left\|\mathring{R}_{osc,t}^{(q+1)}(t)\right\|_{L^1}&\leq\varsigma_{q+1}^{-1}\left|h_{\kappa_{q+1}}\left(\varsigma_{q+1}t\right)\right|\left\|\partial_t\mathring{R}^{(q)}_{\ell_{q+1}}(t)\right\|_{L^1}\nonumber\\
	 	&\lesssim\varsigma_{q+1}^{-1}\ell_{q+1}^{-1}\left\|\mathring{R}^{(q)}\right\|_{C_{[s-1,s+1]}L^1}~.\nonumber
	 \end{align}
	Taking $L^1{[s,s+1]}$-norm in time, then expectation and finally supremum for $s\geq0$, by (\ref{finiteness of all moments of v^q and R^q}) and (\ref{setting of iteration parameters}), we have
	 \begin{align}
	 	\left\|\mathring{R}_{osc,t}^{(q+1)}\right\|_{\bar L^1(\Omega,L^1_sL^1)}
	 	&\lesssim\varsigma_{q+1}^{-1}\ell_{q+1}^{-1}\cdot\sup_{s\geq0}\mathbb E\left\|\mathring{R}^{(q)}\right\|_{C_{s}L^1}~,\nonumber\\
		&\leq C_q\cdot\lambda_{q+1}^{-\vartheta}~.\nonumber
	 \end{align}
	\\
$\mathbf{(3)~Linear~Error}~\mathring{R}_{lin}^{(q+1)}$:
\par In the following we choose $\gamma:=\frac{2(d-1)}{2(d-1)-\vartheta}>1$, i.e. $(d-1)-\frac{d-1}{\gamma}=\frac{\vartheta}{2}$. We estimate each term in (\ref{definition of R_lin}) separately. First, using (\ref{div-representation of w^p+w^c}), {(\ref{def. cutoff Theta}), and $L^\gamma$-boundedness of the operator $\mathcal{R}{\rm div}$, (\ref{estimates for Mikado flows V}), (\ref{subtle estimate for partial_t a_k}), we have for all $t\in{[s,s+1]}$ ($s\geq0$) that
	\begin{align}
	 	&~~~~\left\|\mathcal R\left[\Theta_{q+1}(t)\partial_t\left(\omega^{(p)}_{q+1}(t)+\omega^{(c)}_{q+1}(t)\right)\right]\right\|_{L^\gamma}\nonumber\\
	 	&\lesssim\sigma_{q+1}^{-1}\sum_{k\in\Lambda}\left\|\partial_t a_k^{(q+1)}(t)\mathbb V_k^{(q+1)}\left(\sigma_{q+1}\cdot\right)\right\|_{L^\gamma}\nonumber\\
	 	&\lesssim\sigma_{q+1}^{-1}\sum_{k\in\Lambda}\left\|\partial_t a_k^{(q+1)}(t)\right\|_{0}\left\|\mathbb V_k^{(q+1)}\left(\sigma_{q+1}\cdot\right)\right\|_{L^\gamma}\nonumber\\
	 	&\leq C_q\cdot\sigma_{q+1}^{-1}\mu_{q+1}^{-1+\frac{d-1}{2}-\frac{d-1}{\gamma}}\Big(\varsigma_{q+1}\big|g'_{\kappa}\left(\varsigma_{q+1}t\right)\big|+\big|g_{\kappa}\left(\varsigma_{q+1}t\right)\big|\ell_{q+1}^{-1}\Big)\ell_{q+1}^{-\frac{3d+3}{2}}\left(1+\left\|\mathring{R}^{(q)}\right\|_{C_{[s-1,s+1]}L^1}^{5/2}\right)~.\nonumber
	\end{align}
Taking $L^1{[s,s+1]}$-norm in time, then expectation and finally supremum for $s\geq0$, by (\ref{L^alpha estimates for g_kappa}), (\ref{finiteness of all moments of v^q and R^q}) and (\ref{setting of iteration parameters}), we have
	\begin{align}\label{inductive L^1 estimate of R_lin, 1}
	 	&~\left\|\mathcal R\left[\Theta_{q+1}\partial_t\left(\omega^{(p)}_{q+1}+\omega^{(c)}_{q+1}\right)\right]\right\|_{\bar L^1(\Omega,L^1_sL^1)}\nonumber\\
	 	&\leq C_q\cdot\sigma_{q+1}^{-1}\mu_{q+1}^{-1+\frac{d-1}{2}-\frac{d-1}{\gamma}}\ell_{q+1}^{-\frac{3d+3}{2}}\left(\kappa_{q+1}^{\frac{1}{2}}\varsigma_{q+1}+\kappa_{q+1}^{-\frac{1}{2}}\ell_{q+1}^{-1}\right)\left(1+\sup_{s\geq0}\mathbb E\left\|\mathring{R}^{(q)}\right\|_{C_{s}L^1}^{5/2}\right)\nonumber\\
	 	&\leq C_q\cdot\lambda_{q+1}^{-\left(\vartheta-(d-1)+\frac{d-1}{\gamma}\right)}\nonumber\\
		&\leq C_q\cdot\lambda_{q+1}^{-\vartheta/2}~.
	\end{align}
Here, we use the definition of $\gamma$ and  to deduce that
	 \begin{align}
	    &\vartheta-(d-1)+\frac{d-1}{\gamma}=\vartheta-\frac{\vartheta}{2}=\frac{\vartheta}{2}~.\nonumber
	 \end{align}
For the second part, we use {(\ref{relation R-Laplace}) and} the previous $\bar L^\alpha\big(\Omega,L^1_sW^{1,r}\big)$-estimates for perturbations to obtain
	\begin{align}\label{inductive L^1 estimate of R_lin, 2}
	 	\left\|\mathcal{R}\Big(\nu\Delta\omega_{q+1}+(z_{\ell}-z)\Big)\right\|_{\bar L^1(\Omega,L^1_sL^1)}
	 	&\lesssim\big\|\omega_{q+1}\big\|_{\bar L^1(\Omega,L^1_sH^1)}+\big\|z_{\ell}-z\big\|_{\bar L^2(\Omega,L^2_sL^2)}\nonumber\\
	 	&\leq C_q\cdot\lambda_{q+1}^{-\vartheta}+\frac{\delta^2}{4} ~.
	\end{align}
Here we used similar argument as above and choose $\lambda_{q+1}$  sufficiently large so that }
	\begin{align}
		\big\|z_{\ell}-z\big\|_{\bar L^2(\Omega,L^2_sL^2)}\leq\frac{\delta^2}{4}~.\nonumber
	\end{align}
For the last part, by using (\ref{z finiteness of moments}), (\ref{finiteness of all moments of v^q and R^q}) and the previous $\bar L^\alpha\big(\Omega,L^p_sL^\infty\big)$-estimates for perturbations we obtain
	\begin{align}\label{inductive L^1 estimate of R_lin, 3}
	 	&~\left\|\left(v^{(q)}_\ell+z_\ell\right)\mathring\otimes~\omega_{q+1}+\omega_{q+1}\mathring\otimes\left(v^{(q)}_\ell+z_\ell\right)\right\|_{\bar L^1(\Omega,L^1_sL^1)}\nonumber\\
	 	&\lesssim\left(\sup_{s\geq0}\mathbb E\left\|v^{(q)}\right\|_{0,[s,s+2]}^2+\sup_{s\geq0}\mathbb E\big\|z\big\|_{C_{[s,s+2]}L^2}^2\right)^{1/2}\big\|\omega_{q+1}\big\|_{\bar L^2(\Omega,L^p_sL^\infty)}~,\nonumber\\
	 	&\leq C_q\cdot\lambda_{q+1}^{-\vartheta}~.
	\end{align}
	 Hence, by (\ref{inductive L^1 estimate of R_lin, 1}), (\ref{inductive L^1 estimate of R_lin, 2}) and (\ref{inductive L^1 estimate of R_lin, 3}), we obtain
	 \begin{align}
	 	\left\|\mathring{R}_{lin}^{(q+1)}\right\|_{\bar L^1(\Omega,L^1_sL^1)}\leq C_q\cdot\lambda_{q+1}^{-\vartheta/2}+\frac{\delta^2}{4} ~.\nonumber
	 \end{align}
	 \\
$\mathbf{(4)~Correction~Error}~\mathring{R}_{cor}^{(q+1)}$:\\
\par By (\ref{definition of R_cor}), (\ref{def. cutoff Theta}), (\ref{inductive L^2 estimate for w^p}), (\ref{inductive L^2 estimate for w^c}), (\ref{inductive L^2 estimate for w^t}) and (\ref{finiteness of all moments of v^q and R^q}), we have
	\begin{align}
	 	\left\|\mathring{R}_{cor}^{(q+1)}\right\|_{\bar L^1(\Omega,L^1_sL^1)}
	 	&\lesssim\left(\big\|\omega_{q+1}\big\|_{\bar L^2(\Omega,L^2_sL^2)}+\big\|\omega^{(p)}_{q+1}\big\|_{\bar L^2(\Omega,L^2_sL^2)}\right)\left\|\omega^{(c)}_{q+1}+\omega^{(t)}_{q+1}\right\|_{\bar L^2(\Omega,L^2_sL^2)}\nonumber\\
	 	&\lesssim\left(\left\|\mathring{R}^{(q)}\right\|_{\bar L^1(\Omega,L^1_sL^1)}^{1/2}+2^{-q}+ C_q\cdot\lambda_{q+1}^{-\vartheta}\right)\cdot C_q\cdot\lambda_{q+1}^{-\vartheta}\nonumber\\
	 	&\leq C_q\cdot\lambda_{q+1}^{-\vartheta}~.\nonumber
	\end{align}
$\mathbf{(5)~Cutoff~Error}~\mathring{R}_{cut}^{(q+1)}$:
\par By (\ref{definition of R_cut}), (\ref{def. cutoff Theta}), standard mollification estimate and $L^1$-boundedness of the operator $\mathcal{R}$, we have for all $t\in[s,s+1]$ ($s\geq0$) that
	 \begin{align}
	 	&~\left\|\mathring{R}_{cut}^{(q+1)}(t)\right\|_{L^1}\nonumber\\
	 	&\lesssim\Big(1-\Theta_{q+1}^2(t)\Big)\left\|\mathring{R}^{(q)}\right\|_{C_{[s-1,s+1]}L^1}+{\big\|\Theta_{q+1}'\big\|_0}\bigg(\left\|\omega^{(p)}_{q+1}(t)\right\|_{{L^1}}+\left\|\omega^{(c)}_{q+1}(t)\right\|_{L^1}+\left\|\omega^{(t)}_{q+1}(t)\right\|_{L^1}\bigg)~.\nonumber
	 \end{align}
	 Taking $L^1{[s,s+1]}$-norm in time and expectation and supremum for $s\geq0$, by (\ref{finiteness of all moments of v^q and R^q}), H\"older's inequality, the previous {$\bar L^1\big(\Omega,L^1_sW^{1,r}\big)$}-estimates of perturbations {and (\ref{def. cutoff Theta})},
we have
	 \begin{align}
	 	&~\left\|\mathring{R}_{cut}^{(q+1)}\right\|_{\bar L^1(\Omega,L^1_sL^1)}\nonumber\\
	 	&\lesssim{\ell_{q+1}^{1/2}}\cdot\sup_{s\geq0}\mathbb E\left\|\mathring{R}^{(q)}\right\|_{C_{s}L^1} +{\big\|\Theta_{q+1}'\big\|_{0}\left(\left\|\omega^{(p)}_{q+1}\right\|_{\bar L^1(\Omega,L^1_sW^{1,r})}+\left\|\omega^{(c)}_{q+1}\right\|_{\bar L^1(\Omega,L^1_sW^{1,r})}+\left\|\omega^{(t)}_{q+1}\right\|_{\bar L^1(\Omega,L^1_sW^{1,r})}\right)}\nonumber\\
	 	&\leq {C_q\cdot\lambda_{q+1}^{-\vartheta/2}+\ell_{q+1}^{-1/2}\cdot C_q\cdot\lambda_{q+1}^{-\vartheta}\leq C_q\cdot\lambda_{q+1}^{-\vartheta/2}}~.\nonumber
	 \end{align}\
\par Hence, we finally have
\begin{align}
	 \left\|\mathring{R}^{(q+1)}\right\|_{\bar L^1(\Omega,L^1_sL^1)}\leq C_q\cdot\lambda_{q+1}^{-{\vartheta/2}}+\frac{3\delta^2}{4}~.\nonumber
\end{align}
Choosing $\lambda_{q+1}$  sufficiently large, we get (\ref{smallness of Rey.}). The proof of Proposition \ref{prop. main iteration} is complete.\\

\section{Proof of Theorem \ref{thm. Stochastic Serrin's Criterion}}\label{sec. sto. weak-strong uniqueness}\	

\par In this section we give the proof of Theorem \ref{thm. Stochastic Serrin's Criterion} by extending the classical {result \cite{FJR72, Kat84, FLRT00, LM01, CL22}} in PDE case to the stochastic case. For simplicity we consider \eqref{sto. NS} on $[0,T]$ for any $T>0$.
In the following we first prove that a $X^{p,q}_T$-valued solution is a Leray-Hopf solution. Then we prove pathwise uniqueness. To this end, we introduced the following stochastic linearized N-S system:
\begin{equation} \label{linearized sto. N-S}
	\left\{\begin{aligned}
		&{\rm d}\chi(t)=\Big(-{\rm div}\big(u(t)\otimes\chi(t)\big) + \Delta\chi(t)-\nabla p(t)\Big){\rm d}t+{\rm dW}(t)~,\\
		&{\rm div}\ \chi=0~.\\
	\end{aligned}\right.
\end{equation}

\begin{theorem}\label{thm. existence of linearized sto. N-S}
	Let $u\in X_T^{p,q}$ $P$-a.s. with $p,q$ satisfying \eqref{Serrin scales} be a solution to system (\ref{sto. NS}). Then for any given $\chi_0\in L^2_\sigma$ , there exists a probabilistically strong and analytically weak solution $\chi\in L^2\Big(\Omega;~C_{[0,T]}L^2\Big)\\\bigcap L^2\Big(\Omega;~L^2_{[0,T]}H^1\Big)$ to system (\ref{linearized sto. N-S}) with initial data $\chi_0$ such that $\mathbb P-a.s.$ ,
	\begin{align}\label{expectational energy inequality for linearized sto. N-S}
		\frac{1}{2}\mathbb E\big\|\chi(t)\big\|_{L^2}^2+\nu\int_0^t\mathbb E\big\|\nabla\chi(s)\big\|_{L^2}^2ds\leq\frac{1}{2}\mathbb E\big\|\chi_0\big\|_{L^2}^2+\frac{t}{2}{\rm Tr}\big[G^*G\big]~,~t\in[0,T]~.
	\end{align}
\end{theorem}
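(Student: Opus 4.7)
My plan is a pathwise Galerkin approximation of the linearized system \eqref{linearized sto. N-S}, exploiting the cancellation from $\mathrm{div}\,u=0$ in the $L^2$ energy identity. Let $\{e_k\}_{k\in\mathbb{N}}$ be an orthonormal basis of $L^2_\sigma$ consisting of eigenfunctions of the Stokes operator and let $P_N$ be the projection onto $\mathrm{span}(e_1,\dots,e_N)$. I would solve the finite-dimensional It\^o system
\begin{equation*}
d\chi_N = \bigl(-P_N\,\mathrm{div}(u\otimes \chi_N)+\Delta \chi_N\bigr)\,dt + P_N\,dW, \qquad \chi_N(0)=P_N\chi_0.
\end{equation*}
The projected transport term is pathwise Lipschitz on $\mathrm{span}(e_1,\dots,e_N)$ with a time-integrable Lipschitz constant controlled by $\|u(t)\|_{L^q}$, so the system is globally well-posed in the probabilistically strong sense and the approximants $\chi_N$ are $\mathcal{F}_t$-adapted.

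The key energy identity comes from It\^o's formula applied to $\|\chi_N\|_{L^2}^2$. The transport contribution $-2\langle \chi_N, P_N\mathrm{div}(u\otimes\chi_N)\rangle = 2\langle u\otimes\chi_N,\nabla\chi_N\rangle$ vanishes because $\mathrm{div}\,u=0$ and $\chi_N$ is smooth in $x$, leaving
\begin{equation*}
\|\chi_N(t)\|_{L^2}^2 + 2\int_0^t \|\nabla\chi_N\|_{L^2}^2\,ds = \|P_N\chi_0\|_{L^2}^2 + 2\int_0^t \langle \chi_N, P_N\,dW\rangle + t\,\mathrm{Tr}[P_N G^*GP_N].
\end{equation*}
Taking expectation kills the martingale term and produces the $N$-uniform bound $\mathbb{E}\|\chi_N(t)\|_{L^2}^2 + 2\int_0^t \mathbb{E}\|\nabla\chi_N\|_{L^2}^2\,ds \leq \mathbb{E}\|\chi_0\|_{L^2}^2 + t\,\mathrm{Tr}[G^*G]$, hence $\chi_N$ is bounded in $L^2(\Omega; L^\infty_{[0,T]}L^2)\cap L^2(\Omega; L^2_{[0,T]}H^1)$.

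To conclude I would extract a weakly converging subsequence $\chi_{N_k}\rightharpoonup \chi$; adaptedness is preserved under $L^2(\Omega;\cdot)$-weak limits by Mazur's lemma. The stochastic and Stokes terms pass to the limit in the standard way; the delicate step is the bilinear transport limit $\mathrm{div}(u\otimes\chi_{N_k})\to\mathrm{div}(u\otimes\chi)$, and this is where the Ladyzhenskaya-Prodi-Serrin exponents enter. From the equation one extracts fractional-in-time regularity of $\chi_N$ in a negative Sobolev space, so Aubin-Lions-Simon applied $\omega$-wise upgrades the weak convergence to strong $L^{p'}_{[0,T]}L^{q'}$ convergence, with $1/p'+1/p=1$, $1/q'+1/q=1$; then H\"older yields $\|u\otimes(\chi_{N_k}-\chi)\|_{L^1_{[0,T]}L^1}\to 0$ almost surely. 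Weak lower semicontinuity of the quadratic functionals in the Galerkin identity, divided by $2$, then gives \eqref{expectational energy inequality for linearized sto. N-S}. The main obstacle is reconciling the pathwise-only $X^{p,q}_T$ control on $u$ with the moment bounds needed to take expectation: I would handle this by localizing with the stopping times $\tau_R=\inf\{t:\|u\|_{X^{p,q}_t}>R\}\wedge T$, establishing existence on $[0,\tau_R]$, and removing the cut-off by sending $R\to\infty$ using the a.s.\ finiteness of $\|u\|_{X^{p,q}_T}$.
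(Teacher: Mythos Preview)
The paper does not actually prove this theorem: it writes ``Existence follows by a standard Galerkin method and we omit the details,'' and then deduces $\chi\in C_{[0,T]}L^2$ from $u\otimes\chi\in L^2_{[0,T]}L^2$ (H\"older with the Serrin exponents) via \cite[Theorem~4.2.5]{LR15}. Your proposal is precisely an implementation of that standard Galerkin scheme, so the approaches coincide.

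One caveat worth noting: invoking Aubin--Lions--Simon ``$\omega$-wise'' produces $\omega$-dependent subsequences, which is incompatible with the single subsequence $N_k$ you extracted by weak compactness in $L^2(\Omega;\cdot)$. Fortunately this step is unnecessary. The equation is \emph{linear} in $\chi$, so after your stopping-time localization (which gives $\|u\|_{X^{p,q}_{[0,\tau_R]}}\le R$ uniformly in $\omega$) the map
\[
\chi\longmapsto\mathbb{E}\Bigl[\xi\int_0^{t}\langle u\otimes\chi,\nabla\varphi\rangle\,ds\Bigr]
\]
is a bounded linear functional on $L^2\bigl(\Omega;L^{p'}_{[0,T]}L^{q'}\bigr)$, and the Galerkin sequence is bounded there by interpolation of the energy bounds $L^\infty_{[0,T]}L^2\cap L^2_{[0,T]}H^1$; weak convergence alone then passes the bilinear term to the limit. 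For the $C_{[0,T]}L^2$ regularity of the limit you can argue exactly as the paper does.
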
\

\par Existence follows by a standard Galerkin method and we omit the details. The uniqueness of probabilistically strong solutions can be shown by a similar but simpler argument as in the following pathwise uniqueness proof. Now, due to the regularity of $u$ and $\chi$, we have $\mathbb P-a.s.$ that $u\otimes\chi\in L^2_{[0,T]}L^2$ and hence ${\rm div}(u\otimes\chi),~\partial_t\chi\in L^2_{[0,T]}H^{-1}$. Thus we obtain $\chi\in C_{[0,T]}L^2$ by \cite[Theorem 4.2.5]{LR15}.

We also introduce the following backward system on $[0,T]\times \mathbb{T}^d$ for the pair $(\Phi,\gamma)$:
\begin{equation}\label{backward linearized N-S system}
	\left\{\begin{aligned}
		&-\partial_t \Phi-u\cdot\nabla\Phi-\nu\Delta\Phi+\nabla\gamma=F~,\\
		&{\rm div}\ \Phi=0~,\\
		&\Phi(T)=0~.\\
	\end{aligned}\right.
\end{equation}
\begin{theorem}[{\cite[Theorem A.3]{CL22}}]\label{thm. regularity of linearized N-S}
Let $d\geq2$, $\nu>0$ and $0<T<\infty$ be arbitrarily fixed. Let $u\in X^{p,q}_T$ with some $ p\in[2,\infty]$ and $q\in (2,\infty]$ satisfying (\ref{Serrin scales}). Then for any given $F\in C^\infty_c\left([0,T]\times\mathbb T^d\right)$, the system (\ref{backward linearized N-S system}) has a weak solution $\Phi\in L^\infty_{[0,T]}H^1\bigcap L^2_{[0,T]}H^2$ with some $\nabla\gamma\in L^2_{[0,T]}L^2$, such that it can be used as a test function in the weak formulation
    \begin{align}
	\int_0^T\int_{\mathbb T^d}\eta\cdot\Big(\partial_t\Phi+u\cdot\nabla\Phi+\nu\Delta\Phi\Big)dxdt=0\nonumber
    \end{align}
    for $\eta\in L^\infty_{[0,T]}L^2\bigcap L^2_{[0,T]}H^1$.
\end{theorem}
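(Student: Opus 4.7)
The plan is to reduce to a forward parabolic problem by the change of variables $\tilde\Phi(t,x):=\Phi(T-t,x)$, $\tilde\gamma(t,x):=\gamma(T-t,x)$, $\tilde u(t,x):=u(T-t,x)$, $\tilde F(t,x):=F(T-t,x)$. Then $\tilde\Phi$ satisfies the forward system
\begin{align*}
\partial_t\tilde\Phi-\tilde u\cdot\nabla\tilde\Phi-\nu\Delta\tilde\Phi+\nabla\tilde\gamma=\tilde F,\qquad \mathrm{div}~\tilde\Phi=0,\qquad \tilde\Phi(0)=0,
\end{align*}
with $\tilde u\in X^{p,q}_T$. I would build $\tilde\Phi$ by a standard Galerkin procedure on the divergence-free Fourier basis of $\mathbb T^d$, after first replacing $\tilde u$ by a smooth approximation $\tilde u^n$ (by mollification) so that the truncated ODEs are genuinely solvable, and then passing to the limit. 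The Helmholtz projection automatically removes the pressure at the Galerkin level, and $\nabla\tilde\gamma$ is recovered afterwards from the non-solenoidal part of the right-hand side, with $\nabla\tilde\gamma\in L^2_tL^2$ given the regularity derived below.

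The first, easy energy estimate comes from testing the Galerkin equation against $\tilde\Phi_n$: since $\mathrm{div}~\tilde u^n=0$, the transport term vanishes, and the standard calculation yields a uniform bound in $L^\infty_tL^2\cap L^2_tH^1$ depending only on $T$ and $\|\tilde F\|_{L^2_tL^2}$. The key step is the higher-order estimate, obtained by testing against $-\Delta\tilde\Phi_n$ (still divergence-free on the torus). The pressure gradient vanishes after integration by parts since $\mathrm{div}(\Delta\tilde\Phi_n)=0$, and one is left to control
\begin{align*}
\Bigl|\int_{\mathbb T^d}(\tilde u^n\cdot\nabla\tilde\Phi_n)\cdot\Delta\tilde\Phi_n\,dx\Bigr|\leq\|\tilde u^n\|_{L^q}\|\nabla\tilde\Phi_n\|_{L^s}\|\Delta\tilde\Phi_n\|_{L^2},\qquad \tfrac{1}{s}=\tfrac{1}{2}-\tfrac{1}{q}.
\end{align*}
Apply Gagliardo--Nirenberg $\|\nabla\tilde\Phi_n\|_{L^s}\lesssim\|\nabla\tilde\Phi_n\|_{L^2}^{1-d/q}\|\Delta\tilde\Phi_n\|_{L^2}^{d/q}+\|\nabla\tilde\Phi_n\|_{L^2}$, followed by Young's inequality with exponents $2/(1+d/q)$ and $2/(1-d/q)$, to absorb the top-order factor into $\nu\|\Delta\tilde\Phi_n\|_{L^2}^2$ and produce a remainder of the form $C\|\tilde u^n\|_{L^q}^{2q/(q-d)}\|\nabla\tilde\Phi_n\|_{L^2}^2$. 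Here the Serrin condition $2/p+d/q\leq 1$ enters decisively: it is exactly equivalent to $p\geq 2q/(q-d)$, so the coefficient $\|\tilde u^n\|_{L^q}^{2q/(q-d)}$ is integrable in time on $[0,T]$ by Hölder, and Grönwall's lemma yields a uniform $L^\infty_tH^1\cap L^2_tH^2$ bound. From the equation we then obtain $\partial_t\tilde\Phi_n\in L^2_tL^2$ uniformly.

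With these uniform bounds, Aubin--Lions compactness gives a subsequential limit $\tilde\Phi\in L^\infty_tH^1\cap L^2_tH^2$ with $\partial_t\tilde\Phi\in L^2_tL^2$, strong in $L^2_tH^1$, which is sufficient to pass to the limit in the nonlinear product $\tilde u^n\cdot\nabla\tilde\Phi_n$ (using strong convergence of $\tilde u^n\to\tilde u$ in $L^p_tL^q$). Undoing the time reversal returns a solution $(\Phi,\gamma)$ of \eqref{backward linearized N-S system} with the claimed regularity. For the test-function property, the regularity $\Phi\in L^\infty_tH^1\cap L^2_tH^2$ together with $\partial_t\Phi\in L^2_tL^2$ allows the validity of the weak formulation for $\eta\in L^\infty_tL^2\cap L^2_tH^1$ by approximating $\eta$ by smooth divergence-free fields and integrating by parts. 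I expect the main obstacle to be the critical case $2/p+d/q=1$, where the Young exponent is saturated and the absorbing estimate for the transport term requires a Grönwall argument with a time-integrable but not small coefficient; a mollification-then-limit scheme in $\tilde u$, combined with an equi-integrability argument in time for $\|\tilde u^n\|_{L^q}^{2q/(q-d)}$, is what I would use to make this rigorous.
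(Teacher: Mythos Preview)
The paper does not supply its own proof of this statement; it is quoted directly from \cite[Theorem~A.3]{CL22} and used as a black box in Section~\ref{sec. sto. weak-strong uniqueness}. Your approach---time reversal to a forward problem, Galerkin approximation, the $H^2$ energy estimate obtained by testing against $-\Delta\tilde\Phi_n$, Gagliardo--Nirenberg interpolation plus Young's inequality to absorb the transport term, and Gr\"onwall---is the standard one for this type of parabolic regularity result and is correct.

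One minor remark: your concern about the critical case $2/p+d/q=1$ is unnecessary. In that case the Young exponent on $\|\tilde u\|_{L^q}$ is exactly $2q/(q-d)=p$, so the coefficient in front of $\|\nabla\tilde\Phi_n\|_{L^2}^2$ is already in $L^1(0,T)$, and the ordinary Gr\"onwall inequality with an $L^1$ coefficient applies directly; no equi-integrability argument is needed. Also, for $q=\infty$ your claim of strong convergence $\tilde u^n\to\tilde u$ in $L^p_tL^q$ from mollification fails in general, but this does not affect the argument: the uniform bounds still hold, and the limit in the product $\tilde u^n\cdot\nabla\tilde\Phi_n$ can be passed using weak-$*$ convergence of $\tilde u^n$ in $L^p_tL^\infty$ against the strong $L^{p'}_tL^1$ convergence of $\nabla\tilde\Phi_n\cdot\Delta\tilde\Phi_n$ (or any test function).
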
\

\par Now, with Theorem \ref{thm. existence of linearized sto. N-S} and \ref{thm. regularity of linearized N-S} in hand, we are ready to prove Theorem \ref{thm. Stochastic Serrin's Criterion}.

\begin{proof}[Proof for Theorem \ref{thm. Stochastic Serrin's Criterion}] Let $u$ be a solution to (\ref{sto. NS})$_\nu$ as stated in Theorem \ref{thm. Stochastic Serrin's Criterion}, with initial data $u_0$. By Theorem \ref{thm. existence of linearized sto. N-S}, there exists a probabilistically strong and analytically weak solution $\chi\in L^2\big(\Omega;~C_{[0,T]}L^2\big)\\\bigcap L^2\big(\Omega;~L^2_{[0,T]}H^1\big)$ to (\ref{linearized sto. N-S}) with initial data $\chi_0=u_0$ and satisfying (\ref{expectational energy inequality for linearized sto. N-S}). We first show that $\mathbb P-a.s.$ , $\chi=u$. Let $\eta:=u-\chi$
and we can write the equations for $\eta$ :
\begin{align*}
	\partial_t\eta+u \cdot\nabla\eta -\nu\Delta\eta +\nabla q =0~.
\end{align*}
This equation is understood in the sense of analytic weak formulation as follows:
\begin{align}\label{back}
	\int_0^T\int_{\mathbb T^d}\eta \cdot\Big(\partial_t\phi+u \cdot\nabla\phi+\Delta\phi\Big)dxdt=0
\end{align}
for all divergence-free test function $\phi\in C^\infty\left([0,T]\times\mathbb T^d\right)$ such that $\phi(T)=0$. Now for $u(\omega)\in X^{p,q}_T$ and arbitrarily given $F\in C^\infty_c\Big([0,T]\times\mathbb T^d\Big)$ applying Theorem \ref{thm. regularity of linearized N-S}, we obtain a solution $\Phi(\omega)$ to \eqref{backward linearized N-S system} with regularity stated in Theorem \ref{thm. regularity of linearized N-S}. Let $\phi=\Phi(\omega)$ in \eqref{back} we obtain
\begin{align}
	0
	&=-\int_0^T\int_{\mathbb T^d}\eta \cdot\Big(\partial_t\Phi +u \cdot\nabla\Phi +\Delta\Phi \Big)dxdt\nonumber\\
	&=\int_0^T\int_{\mathbb T^d}\eta (t,x)\cdot F(t,x)dxdt~,\nonumber
\end{align}
which {implies $\chi=u$ by continuity of $u$ and $\chi$}. (\ref{expectational energy inequality (normal)}) follows directly from (\ref{expectational energy inequality for linearized sto. N-S}).\
\par Next, we show  pathwise uniqueness. Let $u_1$, $u_2$ be two solutions to (\ref{sto. NS}) with the same initial data $u_0$, as stated in Theorem \ref{thm. Stochastic Serrin's Criterion}. Then by the first part of the proof, they are Leray-Hopf and belong to $C_{[0,T]}L^2\bigcap L^2_{[0,T]}H^1\bigcap X^{p,q}_T$, $\mathbb P-a.s.$. Now, let $\Upsilon:=u_1-u_2$.  Fix $\mathbb P-a.s.~\omega$, we have that $\Upsilon(\omega)\in C_{[0,T]}L^2\bigcap L^2_{[0,T]}H^1\bigcap X^{p,q}_T$
is a weak solution (in the sense of space-time distribution) to the Stokes system
\begin{equation*}
	\left\{\begin{aligned}
		&\partial_t \Upsilon =\Delta\Upsilon -\nabla p +{\rm div}F~,\\
		&{\rm div}\Upsilon \equiv0~,\\
		&\Upsilon(0)=0~,\\
	\end{aligned}\right.
\end{equation*}
with $F:=-\big(\Upsilon \otimes u_1 +u_2 \otimes\Upsilon \big)\in L^2_{[0,T]}L^2$.
Taking $L^2$-inner product on the both side of equation, we obtain
\begin{align}\label{energy equality of u_1-u_2}
	\frac{1}{2}\frac{{\rm d}}{{\rm d}t}\big\|\Upsilon (t)\big\|_{L^2}^2+\nu\big\|\nabla\Upsilon (t)\big\|_{L^2}^2
	&=\Big\langle{\rm div}F (t),\Upsilon (t)\Big\rangle\nonumber\\
	&=\Big\langle\Upsilon (t)\otimes u_1 (t),\nabla\Upsilon (t)\Big\rangle_{L^2}~,~~~t\in[0,T]~.
\end{align}
${\bf \bullet~Case 1:}$ $d<q\leq\infty$.
\par By H\"older's inequality with $\dfrac{1}{r}+\dfrac{1}{q}+\dfrac{1}{2}=1$, the Sobolev embedding $H^{d/q}\hookrightarrow L^r$, the interpolation and Young's inequality, we have for all $t\in[0,T]$ that
\begin{align}
	\left|\Big\langle\Upsilon (t)\otimes u_1 (t),\nabla\Upsilon (t)\Big\rangle_{L^2}\right|
	&\leq\big\|u_1 (t)\big\|_{L^q}\big\|\Upsilon (t)\big\|_{L^r}\big\|\nabla\Upsilon (t)\big\|_{L^2}\nonumber\\
	&\leq C\big\|u_1 (t)\big\|_{L^q}\big\|\Upsilon (t)\big\|_{H^{d/q}}\big\|\nabla\Upsilon (t)\big\|_{L^2}\nonumber\\
	&\leq C\big\|u_1 (t)\big\|_{L^q}\big\|\Upsilon (t)\big\|_{L^2}^{1-d/q}\big\|\nabla\Upsilon (t)\big\|_{L^2}^{1+d/q}\nonumber\\
	&\leq C\big\|u_1 (t)\big\|_{L^q}^{\frac2{1-d/p}}\big\|\Upsilon (t)\big\|_{L^2}^2+\frac{1}{2}\big\|\nabla\Upsilon (t)\big\|_{L^2}^2~.\nonumber
\end{align}
Using (\ref{Serrin scales}), we have for all $t\in[0,T]$ that
\begin{align}
	\frac{{\rm d}}{{\rm d}t}\big\|\Upsilon (t)\big\|_{L^2}^2+\big\|\nabla\Upsilon (t)\big\|_{L^2}^2\leq C\Big(\big\|u_1 (t)\big\|_{L^q}^p+1\Big)\big\|\Upsilon (t)\big\|_{L^2}^2~.\nonumber
\end{align}
Here, $C>0$ is a deterministic and finite constant. {Applying Gronwall's inequality, then by the facts that $\big\|u_1 \big\|_{L^p_{[0,T]}L^q}<\infty$ and $\Upsilon(0)=0$ for $\mathbb P-a.s.~\omega$ , we obtain the desired uniqueness.}\\\\
${\bf \bullet~Case 2:}$ $d=q<\infty$.
\par Since $u_i \in C_{[0,T]}L^d$, we {have for any $\varepsilon>0$ } there exist $\overline{u}_1,u_\varepsilon$ such that $u_1 =\overline{u}_1 +u_\varepsilon $ with $\big\|\overline{u}_1 \big\|_{C_{[0,T]}L^d}<\varepsilon$ and $\big\|u_\varepsilon \big\|_{L^\infty([0,T]\times\mathbb T^d)}<\infty$.
\par Then for the $\overline{u}_1 $ part, by H\"older's inequality with $\dfrac{1}{r}+\dfrac{1}{d}+\dfrac{1}{2}=1$, the Sobolev imbedding $H^{1}\hookrightarrow L^r$, we have for all $t\in[0,T]$ that
\begin{align}
	\left|\Big\langle\Upsilon (t)\otimes\overline{u}_1 (t),\nabla\Upsilon (t)\Big\rangle_{L^2}\right|
	&\leq\big\|\overline{u}_1 (t)\big\|_{L^d}\big\|\Upsilon (t)\big\|_{L^r}\big\|\nabla\Upsilon (t)\big\|_{L^2}\nonumber\\
	&\leq C\varepsilon\big\|\nabla\Upsilon (t)\big\|_{L^2}^2~,\nonumber
\end{align}
and for the $u_\varepsilon $ part, by H\"older's inequality and Young's inequality, we have for all $t\in[0,T]$ that
\begin{align}
	\left|\Big\langle\Upsilon (t)\otimes u_\varepsilon (t),\nabla\Upsilon (t)\Big\rangle_{L^2}\right|
	&\leq\big\|u_\varepsilon \big\|_{L^\infty}\big\|\Upsilon (t)\big\|_{L^2}\big\|\nabla\Upsilon (t)\big\|_{L^2}\nonumber\\
	&\leq4\big\|u_\varepsilon \big\|_{L^\infty}^2\big\|\Upsilon (t)\big\|_{L^2}^2+\frac{1}{4}\big\|\nabla\Upsilon (t)\big\|_{L^2}^2~.\nonumber
\end{align}
Here, $C>0$ is a deterministic and finite constant that only depends on $d$. Then by choosing $\varepsilon=\frac{1}{4 C}$, and together with (\ref{energy equality of u_1-u_2}), we have for all $t\in[0,T]$ that
\begin{align}
	\frac{{\rm d}}{{\rm d}t}\big\|\Upsilon (t)\big\|_{L^2}^2+\big\|\nabla\Upsilon (t)\big\|_{L^2}^2\leq8\big\|u_\varepsilon \big\|_{L^\infty}^2\big\|\Upsilon (t)\big\|_{L^2}^2~.\nonumber
\end{align}
Then using Gronwall's inequality together with the fact that $\Upsilon(0)=0$ implies the desired uniqueness.

\end{proof}\

\appendix
  \renewcommand{\appendixname}{Appendix~\Alph{section}}
   \renewcommand{\theequation}{A.\arabic{equation}}
   \section{Stationary Mikado Flows and its div-Potential}
\label{Appendix A}\
\par In this part we recall the construction of stationary Mikado flows from \cite{CL22}. We point out that the construction is entirely deterministic. Let us begin with the following geometric lemma (cf. \cite[Lemma 1]{Nas54} \cite[Lemma 2.4]{DS17}). Recall that $S_+^{d\times d}$ is the set of all positive definite and symmetric $d\times d$ matrices.

\begin{lemma}[\bf Geometric Lemma]\label{lem. Geometry}
For any compact subset $\mathcal K\subset S_+^{d\times d}$, there exist a finite set $\Lambda\subset\mathbb Z^d$ and smooth functions $\Gamma_k\in C^\infty\left(\mathcal K;\mathbb R\right)$ such that for all $R\in\mathcal K$,
\begin{equation} \label{geometric decomposition for matrices}
	R=\sum_{k\in\Lambda}\Gamma_k^2\left(R\right)\mathbf{e}_k\otimes\mathbf{e}_k~,
\end{equation}
where $\mathbf{e}_k:=k/|k|$ for each $k\in\Lambda$~.	
\end{lemma}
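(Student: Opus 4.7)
The plan is to establish the decomposition locally around each point of $\mathcal{K}$ with strictly positive smooth coefficients, patch via a partition of unity on $\mathcal{K}$, and finally take square roots to obtain the $\Gamma_k$. The principal obstacle, as I shall explain below, is to ensure that after the partition-of-unity patching the coefficients remain uniformly bounded away from zero on all of $\mathcal{K}$; this is what keeps $\Gamma_k=\sqrt{\gamma_k}$ smooth rather than merely continuous.

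First I would exploit two elementary algebraic facts. The $\tfrac{d(d+1)}{2}$-dimensional space $S^{d\times d}$ of symmetric matrices is spanned by rank-one matrices $\mathbf{e}\otimes\mathbf{e}$, and since rational unit vectors $\mathbf{e}_k=k/|k|$ with $k\in\mathbb{Z}^d$ are dense in $\mathbb{S}^{d-1}$, one can pick $k_1,\dots,k_N\in\mathbb{Z}^d$ with $N=\tfrac{d(d+1)}{2}$ such that $\{\mathbf{e}_{k_i}\otimes\mathbf{e}_{k_i}\}_{i=1}^N$ is a linear basis of $S^{d\times d}$. Separately, by averaging over a finite symmetric set of rational directions $\Lambda^\circ\subset\mathbb{Z}^d$ (for instance one stable under permutations and sign changes of coordinates) one produces a representation
\[
\mathrm{Id}=\sum_{k\in\Lambda^\circ}\beta_k\,\mathbf{e}_k\otimes\mathbf{e}_k
\]
with strictly positive rationals $\beta_k$.

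Fix $R_0\in\mathcal{K}$. Splitting $R=(R-t\,\mathrm{Id})+t\,\mathrm{Id}$ with $t=t(R_0)$ large, one expands $R-t\,\mathrm{Id}$ in the basis $\{\mathbf{e}_{k_i}\otimes\mathbf{e}_{k_i}\}_{i=1}^N$ with coefficients depending linearly (hence smoothly) on $R$, while $t\,\mathrm{Id}$ contributes a strictly positive amount in each direction of $\Lambda^\circ$. Taking $t$ large enough, on a small neighborhood $U_{R_0}\subset S_+^{d\times d}$ of $R_0$ every resulting coefficient is smooth in $R$ and bounded below by some $c(R_0)>0$. By compactness of $\mathcal{K}$ I extract a finite subcover $U_{R_0^{(1)}},\dots,U_{R_0^{(m)}}$, set $\Lambda:=\bigcup_i\Lambda^{(i)}\supset\Lambda^\circ\cup\{k_1,\dots,k_N\}$ where $\Lambda^{(i)}$ denotes the local direction set on $U_{R_0^{(i)}}$, and by adding on each patch an extra multiple of the positive decomposition of $\mathrm{Id}$ I arrange that every local decomposition uses all of $\Lambda$ with coefficients $\lambda_{k,i}(R)\geq c>0$.

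To finish, let $\{\rho_i\}_{i=1}^m$ be a smooth partition of unity on a neighborhood of $\mathcal{K}$ subordinate to the chosen cover, and set $\gamma_k(R):=\sum_{i}\rho_i(R)\,\lambda_{k,i}(R)$ for $k\in\Lambda$. Then $R=\sum_i\rho_i(R)\,R=\sum_{k\in\Lambda}\gamma_k(R)\,\mathbf{e}_k\otimes\mathbf{e}_k$, with $\gamma_k\in C^\infty(\mathcal{K};\mathbb{R})$ and $\gamma_k\geq c>0$ throughout $\mathcal{K}$. Defining $\Gamma_k:=\sqrt{\gamma_k}$ yields the required smooth decomposition $R=\sum_{k\in\Lambda}\Gamma_k(R)^2\mathbf{e}_k\otimes\mathbf{e}_k$. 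The delicate step is the uniform positivity achieved in the previous paragraph: without the positive buffer coming from the decomposition of $\mathrm{Id}$, the summed coefficients $\gamma_k$ could vanish at interior points of $\mathcal{K}$ and the square root would fail to be smooth there — this is why patching naively via partition of unity does not suffice and the common enlarged set $\Lambda$ must be used with a uniform positive lower bound.
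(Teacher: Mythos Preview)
The paper does not prove this lemma; it merely cites \cite{Nas54} and \cite{DS17}. Your overall architecture --- local strictly positive decompositions, compactness, partition of unity, then square roots kept smooth by a uniform positive lower bound --- is the standard one, but your local step has a genuine gap. You claim that after writing $R=(R-t\,\mathrm{Id})+t\,\mathrm{Id}$, expanding the first summand in a \emph{fixed} basis $\{\mathbf{e}_{k_i}\otimes\mathbf{e}_{k_i}\}_{i=1}^N$ and the second via $\mathrm{Id}=\sum_{k\in\Lambda^\circ}\beta_k\,\mathbf{e}_k\otimes\mathbf{e}_k$, all resulting coefficients become positive for $t$ large. This is false: the basis coefficient of $R-t\,\mathrm{Id}$ in direction $k_i$ is $c_i(R)-t\,c_i(\mathrm{Id})$, so the total contribution in direction $k_i$ equals $c_i(R)+t\bigl(\beta_{k_i}-c_i(\mathrm{Id})\bigr)$ (with $\beta_{k_i}:=0$ if $k_i\notin\Lambda^\circ$), which tends to $-\infty$ whenever $\beta_{k_i}<c_i(\mathrm{Id})$. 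Concretely, in $d=2$ with basis $k_1=(1,0)$, $k_2=(0,1)$, $k_3=(1,1)$ one has $c_1(\mathrm{Id})=c_2(\mathrm{Id})=1$, while the constraint $\sum_{k\in\Lambda^\circ}\beta_k(\mathbf e_k)_1^2=1$ forces $\beta_{(1,0)}\le 1$ with strict inequality as soon as $\Lambda^\circ$ contains any other direction with nonzero first component; hence the net coefficient in direction $(1,0)$ is either $c_1(R)$ or tends to $-\infty$, and in particular is negative near $R_0=\left(\begin{smallmatrix}1&2\\2&5\end{smallmatrix}\right)\in S_+^{2\times 2}$, for which $c_1(R_0)=-1$.

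The usual repair, going back to Nash, is to let the local direction set depend on $R_0$: write $R_0-\epsilon\,\mathrm{Id}=\sum_{i=1}^d\mu_i\,v_i\otimes v_i$ with $\mu_i>0$ by the spectral theorem, and approximate each eigenvector $v_i$ by a rational direction $\mathbf{e}_{k_i'}$ so closely that $R_0-\sum_i\mu_i\,\mathbf{e}_{k_i'}\otimes\mathbf{e}_{k_i'}$ lies in a small ball around $\epsilon\,\mathrm{Id}$ and hence in the open cone generated by $\{\mathbf e_k\otimes\mathbf e_k:k\in\Lambda^\circ\}$. This yields a strictly positive decomposition of $R_0$ over the $R_0$-dependent set $\{k_1',\dots,k_d'\}\cup\Lambda^\circ$, which by continuity of the (affine) coefficient maps persists on a neighbourhood $U_{R_0}$; your compactness and partition-of-unity conclusion then goes through. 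Your later sentence about ``adding on each patch an extra multiple of the positive decomposition of $\mathrm{Id}$'' to activate all directions of the enlarged $\Lambda$ also needs rewording --- as stated it changes the matrix being decomposed --- but that is a minor bookkeeping issue once the local step is fixed.
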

In the following we always choose $\mathcal K=B_{1/2}\left(\textrm{Id}\right)$.
\par {The construction of stationary Mikado flows is as follows.  We first choose a point $p_k\in(0,1)^d$ {for each $k\in\Lambda$} such that $p_k\neq p_{-k}$ if both $k,-k\in\Lambda$. For each $k\in\Lambda$ we denote a periodic line $l_k:=\lbrace sk+p_k\in\mathbb T^d:s\geq0\rbrace$ that passes through $p_k$ in direction $\mathbf{e}_k$.

Now let  $\Psi,\Phi\in C_c^\infty\left((1/2,1);\mathbb R \right)$  {and $c_k>0$} be set such that the functions
\begin{align}
	&\Psi_k(x):={\mu}^{\frac{d-1}{2}}{c_k}\Psi\big({\mu}{\rm dist}(l_k,x)\big)~,~~~~x\in\mathbb T^d;\nonumber\\
	&\Phi_k(x):={\mu}^{\frac{d-1}{2}-2} {c_k}\Phi\big({\mu}{\rm dist}(l_k,x)\big)~,~~~~x\in\mathbb T^d\nonumber
\end{align}
satisfying
\begin{align} \label{normalized L^2-norm of Psi_k}
	&\big\| {\Psi_k}\big\|_{L^2}=1~,\\
	&\Delta {\Phi_k}= {\Psi_k}~~~\textrm{on}~\mathbb T^d~.\nonumber
\end{align}
{Here, the constant $\mu>0$ is the so-called concentration parameter and always set to be sufficiently large for special use. Then the stationary Mikado flows and {their} div-potential are defined by
\begin{align}\label{definition for W_k}
	 {\mathbb W_k}
	&:={\Psi_k}\mathbf{e}_k~,\\
	\label{definition for V_k}
	 {\mathbb V_k}
	&:=\mathbf{e}_k\otimes\nabla {\Phi_k}-\nabla {\Phi_k}\otimes\mathbf{e}_k~.
\end{align}
For convenience, we recall the following equality and bounds from \cite[Theorem 4.3]{CL22}.
\begin{align}
	&{\rm div}~\mathbb W_k=0~,\nonumber\\
	\label{stationary Euler equation for W_k}
	&{\rm div}\left(\mathbb W_k\otimes\mathbb W_k\right) =0~,\\
	\label{relation divV_k=W_k}
	&\textrm{div}\mathbb V_k=\mathbb W_k~,
\end{align}
and
\begin{align}\label{estimates for Mikado flows W0W}
	&\left\|\mathbb W_k\otimes\mathbb W_{k'}\right\|_{L^\alpha}\lesssim \mu^{(d-1)-\frac{d}{\alpha}}~~~,~for~all~k\neq k'~;\\
	&\mu^{-m}\left\|\nabla^m\mathbb W_k\right\|_{L^\alpha}\lesssim_m\mu^{\frac{d-1}{2}-\frac{d-1}{\alpha}}~~~,~for~all~k~;\label{estimates for Mikado flows W}\\
	&\mu^{-m}\left\|\nabla^m\mathbb V_k\right\|_{L^\alpha}\lesssim_m\mu^{-1+\frac{d-1}{2}-\frac{d-1}{\alpha}}~~~,~for~all~k~;\label{estimates for Mikado flows V}
\end{align}
for all $m\in\mathbb N_0$ and all $1\leq \alpha\leq\infty$.\\

\renewcommand{\theequation}{B.\arabic{equation}}
\section{The Operators $\mathcal R$ and $\mathcal B$}
\label{Appendix B}
In this section we recall anti-divergence operator $\mathcal{R}$ and $\mathcal{B}$ from \cite[Appendix B]{CL22}.\\

\noindent$\bullet$ {\bf Anti-divergence $\mathcal R$}

The operator $\mathcal R: C^\infty\big(\mathbb T^d;\mathbb R^d\big)\longrightarrow C^\infty\big(\mathbb T^d;\mathcal S^{d\times d}_0\big)$
\begin{align}
	(\mathcal R v)^{ij}:=\mathcal R^{ij}_kv^k:=\frac{2-d}{d-1}\Delta^{-2}\partial_i\partial_j\partial_kv^k-\frac{\delta_{ij}}{d-1}\Delta^{-1}\partial_kv^k+\Delta^{-1}\partial_i\delta_{jk}v^k+\Delta^{-1}\partial_j\delta_{ik}v^k~,\nonumber
\end{align}
Here, $\mathcal S^{d\times d}_0$ denotes the set of $d\times d$ trace-free symmetric matrices. A direct computation (see \cite[Appendix B.2]{CL22}) gives that
\begin{align}
	{\rm tr}(\mathcal R v)&=0~,\nonumber\\
	{\rm div}(\mathcal R v)&=v-\fint_{\mathbb T^d}v~,\nonumber\\
	 {\mathcal{R}\Delta v}& {=\nabla v+\nabla^{\rm T}v~.\label{relation R-Laplace}}
\end{align}
It can be shown that $\mathcal R$ is $L^p$-bounded for $1\leq p\leq\infty$ ( see \cite[Theorem B.3]{CL22})  and 
\begin{align}\label{peri-L^p estimate of anti-divergence R}
	\big\|\mathcal{R} f(\sigma\cdot)\big\|_{L^p}\lesssim\sigma^{-1}\big\|f\big\|_{L^p}~
\end{align}
for $f$ with mean zero.
\\\\
\noindent$\bullet$ {\bf  Bilinear anti-divergence $\mathcal B$}

The operator $\mathcal B: C^\infty\big(\mathbb T^d;\mathbb R^d\big)\times C^\infty\big(\mathbb T^d;\mathbb R^{d\times d}\big)\longrightarrow C^\infty\big(\mathbb T^d;\mathcal S^{d\times d}_0\big)$ is defined by
\begin{align}
	\big(\mathcal B(v,M)\big)_{ij}:=v^l\mathcal R^{ij}_kM^k_l+\mathcal R(\partial_iv^l\mathcal R^{ij}_kM^k_l)~.\nonumber
\end{align}
Let $C^\infty_0\big(\mathbb T^d;\mathbb R^{d\times d}\big)$ be the set of periodic smooth matrix-valued functions with zero mean. Then for $v\in C^\infty\big(\mathbb T^d;\mathbb R^d\big)$ and $M\in C^\infty_0\big(\mathbb T^d;\mathbb R^{d\times d}\big)$, we have (see \cite[Theorem B.4]{CL22})
\begin{align}
	{\rm div}\big(\mathcal B(v,M)\big)
	&=vM-\fint_{\mathbb T^d}vM~,\nonumber\\
	\big\|\big(\mathcal B(v,M)\big)\big\|_{L^p}
	&\lesssim\big\|v\big\|_1\big\|M\big\|_{L^p}~,~~~\forall 1\leq p\leq\infty~.\label{L^p estimate for bilinear reverse divergence}
\end{align}\

\renewcommand{\theequation}{C.\arabic{equation}}
\section{$C^N$-Estimates for Compositions}
\label{Appendix C}\
\par We recall the following lemma from \cite[Proposition C.1]{BLIL15}.
\begin{lemma}
	Let $\Psi:\Omega\longrightarrow\mathbb R$ and $f:\mathbb R^n\longrightarrow\Omega$ be two smooth functions, with $\Omega\subset\mathbb R^m$. Then, for every $N\in\mathbb N$ , there is a constant $C=C(n,m,N)>0$ such that
	\begin{align}\label{C^N-estimate for compositions 1}
		\big[\Psi\circ f\big]_N
		&\leq C\left(\big[\Psi\big]_1\big[ f\big]_N+\big\|{\rm D}\Psi\big\|_{N-1}\big\|f\big\|_0^{N-1}\big[ f\big]_N\right),\\
		\big[\Psi\circ f\big]_N
		&\leq C\left(\big[\Psi\big]_1\big[ f\big]_N+\big\|{\rm D}\Psi\big\|_{N-1}\big[ f\big]_1^N\right).\label{C^N-estimates for compositions 2}
	\end{align}
\end{lemma}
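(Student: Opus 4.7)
The natural starting point is Fa\`a di Bruno's formula, which expresses the $N$-th spatial derivative of $\Psi\circ f$ as a finite sum
\[
\partial^{\alpha}(\Psi\circ f)=\sum_{k=1}^{N}\sum_{\substack{\gamma_1+\cdots+\gamma_k=\alpha\\|\gamma_i|\ge 1}} c_{\gamma_1,\dots,\gamma_k}\,(\mathrm{D}^{k}\Psi)\!\circ\! f\;\bigl[\partial^{\gamma_1}f\otimes\cdots\otimes\partial^{\gamma_k}f\bigr],
\]
with combinatorial constants $c_{\gamma_1,\dots,\gamma_k}$ depending only on $n,m,N$. Taking absolute values and the supremum in $x$, this reduces both inequalities to controlling, for each $1\le k\le N$ and each partition $|\gamma_1|+\cdots+|\gamma_k|=N$ with $|\gamma_i|\ge 1$, a product
\[
\|\mathrm{D}^{k}\Psi\|_{0}\cdot[f]_{|\gamma_1|}\cdots[f]_{|\gamma_k|}.
\]

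The second step is to isolate the extreme terms. The case $k=1$ forces $|\gamma_1|=N$ and contributes precisely $[\mathrm{D}\Psi]_{0}[f]_{N}=[\Psi]_{1}[f]_{N}$, which is the first summand in both target estimates. For $k\ge 2$ one has $1\le k-1\le N-1$, so $\|\mathrm{D}^{k}\Psi\|_{0}\le\|\mathrm{D}\Psi\|_{N-1}$; hence it remains to absorb every product $[f]_{|\gamma_1|}\cdots[f]_{|\gamma_k|}$ (with $\sum|\gamma_i|=N$, $|\gamma_i|\ge 1$, $k\ge 2$) into the factor $\|f\|_0^{N-1}[f]_N$ for (\ref{C^N-estimate for compositions 1}) and into $[f]_1^{N}$ for (\ref{C^N-estimates for compositions 2}).

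The third step is a standard convex interpolation inequality for $C^{N}$ semi-norms, proved by repeated application of the one-dimensional mean value theorem on $\mathbb T^d$:
\[
[f]_{j}\lesssim\|f\|_0^{\,1-j/N}[f]_N^{\,j/N},\qquad [f]_{j}\lesssim[f]_1^{\,(N-j)/(N-1)}[f]_N^{\,(j-1)/(N-1)},\qquad 1\le j\le N.
\]
Using the first interpolation, multiplying over $i=1,\dots,k$ and exploiting $\sum|\gamma_i|=N$ gives
$\prod_i[f]_{|\gamma_i|}\lesssim\|f\|_0^{\,k-1}[f]_N$. Since $k-1\ge 1$, the factor $\|f\|_0^{k-1}$ can be majorised by $\|f\|_0^{N-1}$ after harmlessly absorbing a power of $(1+\|f\|_0)$ into the constant $C=C(n,m,N)$ (or, equivalently, by normalising $f$ so that $\|f\|_0\le 1$ and treating the complementary regime trivially); this yields (\ref{C^N-estimate for compositions 1}). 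For (\ref{C^N-estimates for compositions 2}) one uses instead the second interpolation: multiplying over $i$ and computing the total exponents, the sum $\sum(|\gamma_i|-1)=N-k$ gives $\prod_i[f]_{|\gamma_i|}\lesssim [f]_1^{\,N(k-1)/(N-1)}[f]_N^{\,(N-k)/(N-1)}$, and an elementary weighted AM--GM (or Young's inequality) reduces this to $[f]_1^{N}+[f]_N$, the second summand being absorbable into the $k=1$ contribution.

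The main obstacle is the bookkeeping in the last step: keeping the total degree in $f$ equal to $N$ while converting mixed products of semi-norms into the clean right-hand sides prescribed by the lemma, and checking that the constants $C$ depend only on $n,m,N$. Once the interpolation exponents are verified to match, summing the Fa\`a di Bruno expansion over the finitely many $(k,\gamma_1,\dots,\gamma_k)$ delivers both inequalities simultaneously.
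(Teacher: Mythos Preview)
First, note that the paper does not give its own proof of this lemma: it simply cites \cite[Proposition~C.1]{BLIL15}. So there is no in-paper argument to compare against.

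Your Fa\`a di Bruno plus interpolation strategy is the standard route and has the right overall shape, but the absorption step contains a genuine gap, in both inequalities. For (\ref{C^N-estimate for compositions 1}), after interpolating on $f$ you correctly arrive at terms $[\Psi]_k\,\|f\|_0^{\,k-1}[f]_N$ for $2\le k\le N$, and then claim that $\|f\|_0^{k-1}$ can be replaced by $\|f\|_0^{N-1}$ ``after harmlessly absorbing a power of $(1+\|f\|_0)$ into the constant''. This fails when $\|f\|_0<1$: for $k<N$ the ratio $\|f\|_0^{k-1}/\|f\|_0^{N-1}=\|f\|_0^{\,k-N}\to\infty$ as $\|f\|_0\to 0$, and no bounded power of $(1+\|f\|_0)$ can compensate. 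Your parenthetical fix has the cases reversed: it is precisely the regime $\|f\|_0<1$ that is non-trivial. The same defect appears in your argument for (\ref{C^N-estimates for compositions 2}): after Young's inequality you are left with a summand $\|{\rm D}\Psi\|_{N-1}[f]_N$, which is \emph{not} absorbable into the $k=1$ contribution $[\Psi]_1[f]_N$, since in general $\|{\rm D}\Psi\|_{N-1}\gg[\Psi]_1$.

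The missing ingredient is interpolation on $\Psi$ as well: $[\Psi]_k\lesssim[\Psi]_1^{(N-k)/(N-1)}[\Psi]_N^{(k-1)/(N-1)}$. Combined with your interpolation on $f$ one obtains, for each $2\le k\le N$,
\[
[\Psi]_k\,\|f\|_0^{k-1}[f]_N\lesssim\bigl([\Psi]_1[f]_N\bigr)^{\frac{N-k}{N-1}}\bigl([\Psi]_N\|f\|_0^{N-1}[f]_N\bigr)^{\frac{k-1}{N-1}}\le[\Psi]_1[f]_N+\|{\rm D}\Psi\|_{N-1}\|f\|_0^{N-1}[f]_N,
\]
which yields (\ref{C^N-estimate for compositions 1}); the analogous pairing with $[f]_1$ in place of $\|f\|_0$ gives (\ref{C^N-estimates for compositions 2}).
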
\
\bibliographystyle{alpha}
\bibliography{Paper1ref}

\end{document}